\newtheorem{theorem}{Theorem}[section]
\newtheorem{lemma}[theorem]{Lemma}
\newtheorem{proposition}[theorem]{Proposition}
\theoremstyle{definition}
\newtheorem{definition}[theorem]{Definition}
\newtheorem{example}[theorem]{Example}
\theoremstyle{remark}
\newtheorem{remark}[theorem]{Remark}
\numberwithin{equation}{section}
\begin{document}

\title{Differential Norms and Rieffel Algebras}
\author{Rodrigo A. H. M. Cabral}
\address{Departamento de Matem\'atica, Instituto de Matem\'atica e Estat\'istica, Universidade de S\~ao Paulo (IME-USP), BR-05508-090, S\~ao Paulo, SP, Brazil.}
\email{rahmc@ime.usp.br; rodrigoahmc@gmail.com}

\author{Michael Forger}
\address{Departamento de Matem\'atica Aplicada, Instituto de Matem\'atica e Estat\'istica, Universidade de S\~ao Paulo (IME-USP), BR-05508-090, S\~ao Paulo, SP, Brazil.}
\email{forger@ime.usp.br}

\author{Severino T. Melo}
\address{Departamento de Matem\'atica, Instituto de Matem\'atica e Estat\'istica, Universidade de S\~ao Paulo (IME-USP), BR-05508-090, S\~ao Paulo, SP, Brazil.}
\email{toscano@ime.usp.br}

\subjclass[2020]{Primary: 46H35, 47G30, 46L87. Secondary: 46L52, 46L08, 43A65}

\dedicatory{}
\commby{}
\begin{abstract}
We develop criteria to guarantee uniqueness of the C$^*$-norm on a $*$-algebra $\mathcal{B}$. \linebreak Nontrivial examples are provided by the noncommutative algebras of $\mathcal{C}$-valued functions $\mathcal{S}_J^\mathcal{C}(\mathbb{R}^n)$ and $\mathcal{B}_J^\mathcal{C}(\mathbb{R}^n)$ defined by M.A.~Rieffel via a deformation quantization procedure, where $\mathcal{C}$ is a C$^*$-algebra and $J$ is a skew-symmetric linear transformation on $\mathbb{R}^n$ with respect to which the usual pointwise product is deformed. In the process, we prove that the Fr\'echet $*$-algebra topology of $\mathcal{B}_J^\mathcal{C}(\mathbb{R}^n)$ can be generated by a sequence of submultiplicative $*$-norms and that, if $\mathcal{C}$ is unital, this algebra is closed under the C$^\infty$-functional calculus of its C$^*$-completion. We also show that the algebras $\mathcal{S}_J^\mathcal{C}(\mathbb{R}^n)$ and $\mathcal{B}_J^\mathcal{C}(\mathbb{R}^n)$ are spectrally invariant in their respective C$^*$-completions, when $\mathcal{C}$ is unital. As a corollary of our results, we obtain simple proofs of certain estimates in $\mathcal{B}_J^\mathcal{C}(\mathbb{R}^n)$.
\end{abstract}

\maketitle

\section{Introduction}

The main aim of this paper is to present criteria for a given $*$-algebra, denoted in what follows by $\mathcal{B}$ (and defined purely algebraically as in \cite[p.~35]{murphy}), to admit a unique C$^*$-norm. Let us recall that existence of a C$^*$-norm already imposes restrictions, since there are examples of $*$-algebras which do not admit any C$^*$-norm at all. And even when C$^*$-norms do exist, there may ``a priori'' be many different ones -- see the beginning of Section \ref{uniqtheorems}. It is true that any two C$^*$-norms on a $*$-algebra turning it into a C$^*$-algebra are necessarily equal \cite[Corollary 2.1.2, p.~37]{murphy}, but the conclusion breaks down when we abandon the hypothesis of completeness.

Since our focus here will be on the question of uniqueness and not of existence, we shall in the sequel bypass the latter by assuming that the $*$-algebra $\mathcal{B}$ in question is realized as a dense $*$-subalgebra of some C$^*$-algebra $\mathcal{A}$. Within this context we formulate our first main theorem, which states that if $\mathcal{B}$ is closed under the C$^\infty$-functional calculus of $\mathcal{A}$ (see Definition \ref{onlysmooth}), then the C$^*$-norm on $\mathcal{B}$ induced from that of $\mathcal{A}$ is the only possible one (Theorem \ref{uniq}). In the unital case, this can be seen as a noncommutative version of the statement that on a smooth compact manifold $M$, the algebra $C^\infty(M)$ of smooth functions uniquely determines the algebra $C(M)$ of continuous functions, which is the algebraic counterpart of the idea that a smooth manifold is automatically also a topological space: the smooth structure uniquely determines the topology \cite[Chapter 2, pp.~22 \& 23]{lang}.

In our main applications, $\mathcal{B}$ will not be merely a $*$-algebra but rather a Fr\'echet $*$-algebra, that is, a $*$-algebra which is also a metrizable and complete locally convex topological vector space such that both its multiplication and its involution are continuous.\footnote{We recall that a separately continuous bilinear map from a Fr\'echet space to an arbitrary locally convex space is automatically (jointly) continuous \cite[Theorem 1, p.~357]{horvath} \cite[p.~214]{horvath}.} A particularly interesting situation appears when $\mathcal{B}$ is a Fr\'echet $*$-algebra whose topology can be defined by a differential seminorm, as originally introduced by B.~Blackadar and J.~Cuntz \cite{blackadarcuntz} and later modified by S.J.~Bhatt, A.~Inoue and H.~Ogi \cite[Definition~3.1]{bhattdiff}. In this case, there are important results \cite[Theorems 3.3 \& 3.4]{bhattdiff} which will guarantee the validity of the hypotheses of Proposition \ref{envalg}, Theorem \ref{uniq2} and Theorem \ref{uniq}.

In Section \ref{rieffel}, we consider the noncommutative function algebras $\mathcal{S}_J^\mathcal{C}(\mathbb{R}^n)$ and $\mathcal{B}_J^\mathcal{C}(\mathbb{R}^n)$ defined by M.A.~Rieffel \cite{rieffel} via a deformation quantization procedure, where $\mathcal{C}$ is a given C$^*$-algebra of ``coefficients'' and $J$ is a skew-symmetric linear transformation on $\mathbb{R}^n$ with respect to which the usual (commutative) pointwise product is ``deformed'' (Definition \ref{rieffelalgebras}). Using Rieffel's deformed product $\times_J$ (see Equation \eqref{rieffelprod}), together with the pointwise involution and with the choice of standard systems of (semi)norms which are familiar from the theory of distributions (there are several variants), these are Fr\'echet $*$-algebras. However, we will substitute the initial system of norms on $\mathcal{B}_J^\mathcal{C}(\mathbb{R}^n)$ by a more convenient one, resorting to a faithful representation of this function algebra as an algebra of bounded (pseudodifferential) operators on a Hilbert C$^*$-module \cite{lance}. More precisely, we first define an ``operator C$^*$-norm'' on $\mathcal{B}_J^\mathcal{C}(\mathbb{R}^n)$ (see Definition \ref{rieffelalgebras}) and, under the assumption of a unital $\mathcal{C}$, we will define a differential norm on $\mathcal{B}_J^\mathcal{C}(\mathbb{R}^n)$, a construction which will require several steps. In particular, we will need a version of the Calder\'on-Vaillancourt inequality for Hilbert C$^*$-modules (see Theorem \ref{calderonvaillancourt} and Equation \eqref{calderon}), as well as the ``symbol map'' $S$ constructed in reference \cite{melomerklen2} that allows us to obtain an ``inverse Calder\'on-Vaillancourt-type inequality'' (see Equation \eqref{invcalderon}) which, in the scalar case ($\mathcal{C} = \mathbb{C}$), was proved by H.O.~Cordes in \cite[Proposition 4.2, p.~262]{cordes}. Besides showing that the natural topology of $\mathcal{B}_J^\mathcal{C}(\mathbb{R}^n)$ is, in particular, defined by a sequence of submultiplicative $*$-norms, the fact that the topology on $\mathcal{B}_J^\mathcal{C}(\mathbb{R}^n)$ is generated by a differential norm (Theorem \ref{b-diffnorm}) also implies, for a unital $\mathcal{C}$, that this $*$-algebra is closed under the C$^\infty$-functional calculus of its C$^*$-completion (see Theorem \ref{b-uniq}). This result will put us in a position to establish the uniqueness statement for C$^*$-norms on $\mathcal{B}_J^\mathcal{C}(\mathbb{R}^n)$, by means of Theorem \ref{uniq}, for any C$^*$-algebra $\mathcal{C}$ (unital, or not -- see Theorem \ref{b-uniq2}). The analogous C$^*$-norm uniqueness statement for $\mathcal{S}_J^\mathcal{C}(\mathbb{R}^n)$ will also be obtained as a corollary, in Theorem \ref{s-uniq}. Moreover, due to the spectral invariance results contained in Theorems \ref{b-uniq} and \ref{s-specinv} (for a unital $\mathcal{C}$), $\mathcal{B}_J^\mathcal{C}(\mathbb{R}^n)$ and $\mathcal{S}_J^\mathcal{C}(\mathbb{R}^n)$ have the same $K$-theory as their respective C$^*$-completions.

At the end of Section \ref{rieffel}, we provide a few other applications. We begin by showing that the Fr\'echet $*$-algebra of smooth elements for a strongly continuous Lie group representation by $*$-automorphisms on a C$^*$-algebra admits only one C$^*$-norm (Theorem \ref{smooth}), illustrating this result with two algebras of pseudodifferential operators with scalar-valued symbols. Then, we prove that the ``sup norm'' and the ``operator C$^*$-norm'' coincide on $\mathcal{B}_J^\mathcal{C}(\mathbb{R}^n)$ when $J = 0$ (Proposition \ref{sup-op}). Finally, in Theorem \ref{applications}, we use some of our results to give very simple proofs of three propositions of Rieffel's monograph \cite{rieffel}: Propositions 4.11, 5.4 and 5.6.

\section{Uniqueness of C\texorpdfstring{$^*$}{*}-norms} \label{uniqtheorems}

As observed in the Introduction, some $*$-algebras may not admit any C$^*$-norm at all. For a concrete example, denote the Schwartz function space by $\mathcal{S}(\mathbb{R}^n)$ (see Section \ref{rieffel}), which is a dense subspace of $L^2(\mathbb{R}^n)$. Also, consider the algebra $\text{End}^+(\mathcal{S}(\mathbb{R}^n))$ of all linear operators $T$ on $L^2(\mathbb{R}^n)$ such that $\text{Dom }T := \mathcal{S}(\mathbb{R}^n)$, $T[\mathcal{S}(\mathbb{R}^n)] \subseteq \mathcal{S}(\mathbb{R}^n)$, $\mathcal{S}(\mathbb{R}^n) \subseteq \text{Dom }T^*$ and $T^*[\mathcal{S}(\mathbb{R}^n)] \subseteq \mathcal{S}(\mathbb{R}^n)$, where $T^*$ denotes the adjoint operator on $L^2(\mathbb{R}^n)$. Then $\text{End}^+(\mathcal{S}(\mathbb{R}^n))$ becomes a $*$-algebra when equipped with the involution operation $T \longmapsto T^+ := T^*|_{\mathcal{S}(\mathbb{R}^n)}$ \cite[Lemma 3.2, p.~40]{schmudgen2020}. Moreover, define $\mathcal{B}$ as the $*$-subalgebra of $\text{End}^+(\mathcal{S}(\mathbb{R}^n))$ generated by the set
\begin{equation*}
\left\{a_k := \frac{\partial}{\partial x_k} \Big|_{\mathcal{S}(\mathbb{R}^n)}, \, b_k := x_k|_{\mathcal{S}(\mathbb{R}^n)}, \, I|_{\mathcal{S}(\mathbb{R}^n)}: 1 \leq k \leq n\right\}
\end{equation*}
of linear operators on $\mathcal{S}(\mathbb{R}^n)$, in which $x_k$ is the multiplication operator by the coordinate function $x \longmapsto x_k$ and $I$ is the identity operator on $L^2(\mathbb{R}^n)$. Then since
\begin{equation*}
(a_k \circ b_k - b_k \circ a_k)(f) = \frac{\partial (x_k(f))}{\partial x_k} - x_k \, \frac{\partial f}{\partial x_k} = f, \qquad f \in \mathcal{S}(\mathbb{R}^n), \, 1 \leq k \leq n,
\end{equation*}
we conclude, as a consequence of the fact that in a unital Banach algebra a commutator of two elements cannot be equal to the identity \cite[Theorem 13.6, p.~351]{rudinfunctionalanalysis}, that neither $\text{End}^+(\mathcal{S}(\mathbb{R}^n))$ nor $\mathcal{B}$ can carry any submultiplicative norm, let alone a C$^*$-norm.
 
On the other hand, some $*$-algebras can be equipped with more than one C$^*$-norm (in fact, with many different ones). For example, if $S^1 := \left\{ z \in \mathbb{C}: |z| = 1 \right\}$ and $C(S^1)$ is the $*$-algebra of complex-valued continuous functions on $S^1$, then the $*$-subalgebra
\begin{equation*}
\mathcal{B} := \left\{p \colon S^1 \ni z \longmapsto p(z) = \sum_{k = - n}^n a_k \, z^k, a_k \in \mathbb{C}, n \in \mathbb{N} \right\}
\end{equation*}
of trigonometric polynomials admits infinitely many C$^*$-norms: the C$^*$-seminorms $\|\, \cdot \,\|_K \colon g \longmapsto \sup \left\{ |g(z)|: z \in K \right\}$ on $C(S^1)$, in which $K$ is an infinite compact subset of $S^1$, restrict to C$^*$-norms on $\mathcal{B}$, as a consequence of the Identity Theorem for holomorphic functions \cite[The Identity Theorem, p.~228]{remmert1}. To see that $\|\, \cdot \,\|_{K_1} \neq \|\, \cdot \,\|_{K_2}$ if $K_1$ and $K_2$ are distinct infinite compact subsets of $S^1$ first note that, if $z_0 \in K_1 \backslash K_2$, then there exists a compactly supported continuous function $0 \leqslant f \leqslant 1$ on $S^1$ such that $f(z_0) = 1$ and $f|_{K_2} = 0$, so $\|f\|_{K_2} = 0$ and $\|f\|_{K_1} \geqslant 1$; therefore, since the trigonometric polynomials form a dense subalgebra of $C(S^1)$ with respect to the C$^*$-norm $\|\, \cdot \,\|_{S^1} \colon g \longmapsto \sup \left\{ |g(z)|: z \in S^1 \right\}$ \cite[Theorem 4.25, p.~91]{rudinrealandcomplex}, there must be an element $p_0$ in $\mathcal{B}$ such that $\|p_0\|_{K_1} \neq \|p_0\|_{K_2}$.

In the discussion of the two examples above, we have already followed what we believe to be standard terminology in the literature, according to which a seminorm $p$ on a $*$-algebra $\mathcal{B}$ is just a seminorm on $\mathcal{B}$ as a vector space, so the term ``seminorm'' in itself does not ``a priori'' include any requirement of compatibility with either the multiplication or the involution on $\mathcal{B}$. Correspondingly, we say that a seminorm $p$ is \textit{submultiplicative} if we have $p(b_1 b_2) \leqslant p(b_1) p(b_2)$, for all $b_1, b_2 \in \mathcal{B}$, is a \textit{$*$-seminorm} if $p(b^*) = p(b)$, for all $b \in \mathcal{B}$ and is a \textit{C$^*$-seminorm} if it is a submultiplicative $*$-seminorm satisfying $p(b^*b) = p(b)^2$, for all $b \in \mathcal{B}$. Finally, throughout the paper we shall often employ the notation $\|\, \cdot \,\|_\mathcal{B}$ to denote a C$^*$-norm on a general $*$-algebra $\mathcal{B}$. When $\mathcal{A}$ is a C$^*$-algebra, for example, $\|\, \cdot \,\|_\mathcal{A}$ will denote the unique C$^*$-norm which may be defined on $\mathcal{A}$.

The uniqueness theorem for C$^*$-norms on certain $*$-algebras that we shall prove in this section (Theorem \ref{uniq}) depends on just one essential condition, namely closure under the C$^\infty$ functional calculus. But in an intermediate step (Theorem \ref{uniq2}), it involves two technical conditions, one of which is a weakened form of spectral invariance (see Definition \ref{specinv} below).

We begin by recalling the definition of \textit{spectrum of an element} $a$ of an algebra $\mathcal{A}$: if $\mathcal{A}$ is unital with unit $1_\mathcal{A}$, it is the set $\sigma_\mathcal{A}(a) \subseteq \mathbb{C}$ of numbers $\lambda$ such that $\lambda 1_\mathcal{A} - a$ is not invertible in $\mathcal{A}$, whereas if $\mathcal{A}$ is non-unital, it is defined to be the spectrum of $(a, 0)$ in the unitization $\tilde{\mathcal{A}}$ of $\mathcal{A}$ \cite[pp.~6 \& 12]{murphy}. If $\sigma_\mathcal{A}(a) \neq \emptyset$, the corresponding \textit{spectral radius} of $a$ is defined to be $r_\mathcal{A}(a) := \sup \left\{ |\lambda|: \lambda \in \sigma_\mathcal{A}(a) \right\}$.

\begin{proposition} \label{envalg}
Let $\mathcal{B}$ be a dense $*$-subalgebra of a C$^*$-algebra $\mathcal{A}$ with the property that $r_\mathcal{B}(b^*b) = r_\mathcal{A}(b^*b)$, for all $b \in \mathcal{B}$. Then every C$^*$-seminorm on $\mathcal{B}$ is majorized by the restriction of $\|\, \cdot \,\|_\mathcal{A}$ to $\mathcal{B}$. \end{proposition}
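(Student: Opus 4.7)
The plan is to reduce the statement to self-adjoint elements via the C$^*$-property of $p$, and then to show that for such elements $p$ is controlled by the spectral radius, which by spectral invariance can be computed in $\dot{\mathcal{A}}$.

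First, given a C$^*$-seminorm $p$ on $\mathcal{B}$, I would form its kernel $I_p := \{b \in \mathcal{B} : p(b) = 0\}$, which is a two-sided $*$-ideal (this uses submultiplicativity and the $*$-property), pass to the quotient $*$-algebra $\mathcal{B}/I_p$, on which $p$ descends to a genuine C$^*$-norm, and then complete to obtain a C$^*$-algebra $\mathcal{C}_p$. Let $\pi \colon \mathcal{B} \to \mathcal{C}_p$ be the resulting $*$-homomorphism, so that $p(b) = \|\pi(b)\|_{\mathcal{C}_p}$ for all $b \in \mathcal{B}$. After extending $\pi$ to a unital $*$-homomorphism $\dot{\pi} \colon \dot{\mathcal{B}} \to \dot{\mathcal{C}}_p$ (sending units to units), a standard observation gives the spectral inclusion
\begin{equation*}
\sigma_{\dot{\mathcal{C}}_p}\!\bigl(\pi(b)\bigr) \,\subseteq\, \sigma_{\dot{\mathcal{B}}}(b), \qquad b \in \mathcal{B},
\end{equation*}
since any inverse of $\lambda - b$ in $\dot{\mathcal{B}}$ is sent by $\dot{\pi}$ to an inverse of $\lambda - \pi(b)$ in $\dot{\mathcal{C}}_p$.

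Next, for a self-adjoint $a \in \mathcal{B}$, the element $\pi(a)$ is self-adjoint in the C$^*$-algebra $\mathcal{C}_p$, so its norm coincides with its spectral radius. Combining this with the preceding inclusion and invoking spectral invariance of $\mathcal{B}$ in $\mathcal{A}$ (Definition~\ref{specinv}), we obtain
\begin{equation*}
p(a) \,=\, \|\pi(a)\|_{\mathcal{C}_p} \,=\, \sup\bigl|\sigma_{\dot{\mathcal{C}}_p}(\pi(a))\bigr| \,\leqslant\, \sup\bigl|\sigma_{\dot{\mathcal{B}}}(a)\bigr| \,=\, \sup\bigl|\sigma_{\dot{\mathcal{A}}}(a)\bigr| \,=\, \|a\|_\mathcal{A},
\end{equation*}
the last equality being the fact that in a C$^*$-algebra the norm of a self-adjoint element equals its spectral radius. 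For an arbitrary $b \in \mathcal{B}$, the C$^*$-property of both $p$ and $\|\,\cdot\,\|_\mathcal{A}$ applied to the self-adjoint element $b^*b$ yields $p(b)^2 = p(b^*b) \leqslant \|b^*b\|_\mathcal{A} = \|b\|_\mathcal{A}^2$, which finishes the proof.

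The main subtlety I foresee is bookkeeping around the non-unital case: one must verify that $\dot{\mathcal{B}}$ and $\dot{\mathcal{C}}_p$ are well-defined unital envelopes in which the extension $\dot{\pi}$ is a unital $*$-homomorphism, and that the spectral invariance hypothesis, as formulated in Definition~\ref{specinv}, bridges $\sigma_{\dot{\mathcal{B}}}(b)$ and $\sigma_{\dot{\mathcal{A}}}(b)$ for elements of $\mathcal{B}$ rather than only of $\dot{\mathcal{B}}$. Neither issue is deep, but together they account for essentially all the care required in the proof; everything else reduces to the familiar identification of the norm with the spectral radius on hermitian elements of a C$^*$-algebra.
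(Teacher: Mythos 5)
Your proof is correct and follows essentially the same route as the paper: quotient $\mathcal{B}$ by $\ker p$, complete to a C$^*$-algebra, use that unital $*$-homomorphisms shrink spectra together with spectral invariance to control the relevant spectral radius by $\|\, \cdot \,\|_\mathcal{A}$, and reduce the general case to the self-adjoint element $b^*b$ via the C$^*$-identity. The only difference is cosmetic: the paper additionally invokes the Gelfand--Naimark theorem to realize $p$ as the operator seminorm of a Hilbert-space representation, whereas you work directly in the abstract completion $\mathcal{C}_p$, which slightly streamlines the argument.
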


\begin{proof} For each $*$-representation $\rho$ of $\mathcal{B}$ on a Hilbert space $\mathcal{H}$, we may use the corresponding operator norm $\|\, \cdot \,\|_{\mathcal{L}(\mathcal{H})}$ to define a C$^*$-seminorm $\|\, \cdot \,\|_\rho$ on $\mathcal{B}$ by $\|b\|_\rho := \|\rho(b)\|_{\mathcal{L}(\mathcal{H})}$, for all $b \in \mathcal{B}$. Moreover, every C$^*$-seminorm $p$ on $\mathcal{B}$ is of this form: the completion $\overline{\mathcal{B}/\text{ker }p}$ of the quotient of $\mathcal{B}$ by the kernel of $p$ (with respect to the C$^*$-norm given by $\|[b]\|_p := p(b)$, for every $[b] \in \mathcal{B}/\text{ker }p$) is a C$^*$-algebra which, according to the Gelfand-Naimark Theorem \cite[Theorem 3.4.1, p.~94]{murphy} has a faithful representation $\rho'$ on some Hilbert space; therefore, composition of $\rho'$ with the canonical projection from $\mathcal{B}$ to $\mathcal{B}/\text{ker }p$ produces a $*$-representation of $\mathcal{B}$ whose operator C$^*$-seminorm is equal to $p$.

Now let $p$ be a C$^*$-seminorm on $\mathcal{B}$ and $\rho$ be a $*$-representation of $\mathcal{B}$ on some Hilbert space $\mathcal{H}$ satisfying $p = \|\, \cdot \,\|_\rho$. Then by the hypothesis, for every fixed $b \in \mathcal{B}$,
\begin{equation*}
p(b)^2 = \|b\|_\rho^2 = \|\rho(b)\|_{\mathcal{L}(\mathcal{H})}^2 = \|\rho(b)^* \rho(b)\|_{\mathcal{L}(\mathcal{H})} = \|\rho(b^* b)\|_{\mathcal{L}(\mathcal{H})} = r_{\mathcal{L}(\mathcal{H})}(\rho(b^*b))
\end{equation*}
\begin{equation*}
\leqslant r_\mathcal{B}(b^*b) = r_\mathcal{A}(b^*b) = \|b^*b\|_\mathcal{A} = \|b\|_\mathcal{A}^2.
\end{equation*}
\end{proof}

Similarly as in reference \cite{schweitzer}, we shall adopt the following conventions: let $\mathcal{A}$ be an algebra and $\mathcal{B}$ be a subalgebra of $\mathcal{A}$. If $\mathcal{A}$ is non-unital, we define $\dot{\mathcal{A}}$ as $\tilde{\mathcal{A}}$ and $\dot{\mathcal{B}}$ as $\tilde{\mathcal{B}}$; if $\mathcal{A}$ and $\mathcal{B}$ are both unital and share the same unit, let $\dot{\mathcal{A}} := \mathcal{A}$ and $\dot{\mathcal{B}} := \mathcal{B}$; finally, if $\mathcal{A}$ is unital but the unit $1_\mathcal{A}$ of $\mathcal{A}$ does not belong to $\mathcal{B}$, let $\dot{\mathcal{A}} := \mathcal{A}$ and $\dot{\mathcal{B}}$ be the subalgebra of $\mathcal{A}$ generated by $\mathcal{B}$ and $1_\mathcal{A}$. In any case, $\dot{\mathcal{A}}$ and $\dot{\mathcal{B}}$ are unital algebras sharing the same unit. We now make the following

\begin{definition} \label{specinv} Let $\mathcal{A}$ be an algebra and $\mathcal{B}$ be a subalgebra of $\mathcal{A}$. We say that $\mathcal{B}$ is \textit{spectrally invariant} in $\mathcal{A}$ if, for every element of $\dot{\mathcal{B}}$, its spectrum as an element of $\dot{\mathcal{B}}$ coincides with its spectrum as an element of $\dot{\mathcal{A}}$. Similarly, we say that $\mathcal{B}$ is \textit{real spectrally invariant} in $\mathcal{A}$ (respectively, \textit{positive spectrally invariant} in $\mathcal{A}$) if, for every element $b$ of $\dot{\mathcal{B}}$ satisfying $b = b^*$ (respectively, for every element $b$ of $\dot{\mathcal{B}}$ satisfying $b = c^*c$, for some $c \in \dot{\mathcal{B}}$), its spectrum as an element of $\dot{\mathcal{B}}$ coincides with its spectrum as an element of $\dot{\mathcal{A}}$. \end{definition}

Therefore, a sufficient condition for guaranteeing the hypothesis of ``spectral radius invariance'' for elements of the form $b^*b$, $b \in \mathcal{B}$, in Proposition \ref{envalg}, is obtained by requiring $\mathcal{B}$ to be positive spectrally invariant in $\mathcal{A}$, since in this case we have
\begin{equation*}
r_\mathcal{B}(b^*b) = r_{\dot{\mathcal{B}}}(b^*b) = r_{\dot{\mathcal{A}}}(b^*b) = r_\mathcal{A}(b^*b).
\end{equation*}
Clearly, such a condition is also satisfied if the stronger hypothesis that $\mathcal{B}$ is spectrally invariant in $\mathcal{A}$ is fulfilled. A brief discussion on the issue of spectral invariance may be found in Appendix \ref{appendixa}. As is well known, this condition is equivalent to requiring that whenever an element of $\dot{\mathcal{B}}$ has an inverse in $\dot{\mathcal{A}}$, this inverse already belongs to $\dot{\mathcal{B}}$. We should also mention that concepts very similar to the properties of real spectral invariance and of spectral radius invariance for elements of the form $b^*b$, $b \in \mathcal{B}$, presented above, have been discussed in the literature before; compare, for instance, with the concept of \textit{$*$-inverse closedness} and with the \textit{spectral radius preserving (SRP)} property in \cite{barnes1} and \cite{barnes2}.

We will now see that if $\mathcal{B}$ bears a nice relationship with the closed ideals of $\mathcal{A}$ and satisfies the hypotheses of Proposition \ref{envalg}, then it admits only one C$^*$-norm.

\begin{theorem} \label{uniq2} Let $\mathcal{B}$ be a dense $*$-subalgebra of a C$^*$-algebra $\mathcal{A}$ satisfying the following two hypotheses:
\begin{enumerate}
\item For all $b \in \mathcal{B}$, the equality $r_\mathcal{B}(b^*b) = r_\mathcal{A}(b^*b)$ of spectral radii holds.
\item For every closed ideal $\mathcal{I}$ of $\mathcal{A}$, $\mathcal{I} \cap \mathcal{B}$ is a dense $*$-subalgebra of $\mathcal{I}$.
\end{enumerate}
Then the only C$^*$-norm which may possibly be defined on $\mathcal{B}$ is the restriction of $\|\, \cdot \,\|_\mathcal{A}$. \end{theorem}

\begin{proof} According to Proposition \ref{envalg}, any C$^*$-norm $\|\, \cdot \,\|_\mathcal{B}$ on $\mathcal{B}$ is majorized by the restriction of $\|\, \cdot \,\|_\mathcal{A}$ to $\mathcal{B}$. Therefore, one can extend $\|\, \cdot \,\|_\mathcal{B}$ uniquely to a C$^*$-seminorm $p_\mathcal{B}$ on $\mathcal{A}$, whose kernel $\mathcal{I}$ will be a closed $*$-ideal of $\mathcal{A}$; moreover, due to the fact that $\|\, \cdot \,\|_\mathcal{B}$ is a norm on $\mathcal{B}$, we have $\mathcal{I} \cap \mathcal{B} = \left\{0\right\}$.
By the hypothesis (2), it follows that $\left\{0\right\}$ is dense in $\mathcal{I}$, so $\mathcal{I} = \left\{0\right\}$: in other words, $p_\mathcal{B}$ is actually a C$^*$-norm on $\mathcal{A}$.
Since there exists only one C$^*$-norm turning $\mathcal{A}$ into a C$^*$-algebra, $p_\mathcal{B}$ must coincide with $\|\, \cdot \,\|_\mathcal{A}$ on $\mathcal{A}$ and, in particular, on $\mathcal{B}$. This proves the claim. \end{proof}

Our next objective will be to search for situations in which the requirements (1) and (2) of Theorem \ref{uniq2} are fulfilled in a natural way. At this point we find it appropriate to say a few words about the concept of closure under the C$^\infty$-functional calculus, taking into account the possibility that the larger algebra and its subalgebra may not share a unit:

\begin{definition} \label{onlysmooth} Let $\mathcal{B}$ be a $*$-subalgebra of a C$^*$-algebra $\mathcal{A}$. $\mathcal{B}$ is said to be \textit{closed under the C$^\infty$-functional calculus \cite[p.~309]{bhattdiff} \cite[p.~256, (1)]{blackadarcuntz} \cite[Remark (1), p.~274]{kissin} \cite[p.~22]{longo} (or smooth functional calculus \cite[p.~6]{elliott}) of $\mathcal{A}$} if, for every self-adjoint element $b$ of $\dot{\mathcal{B}}$ and every smooth function $f$ on an open neighborhood $U \subseteq \mathbb{R}$ of $\sigma_{\dot{\mathcal{A}}}(b)$, one has $f(b) \in \dot{\mathcal{B}}$. \end{definition}

The following theorem shows that being closed under the C$^\infty$-functional calculus of $\mathcal{A}$ is a sufficient hypothesis on the dense $*$-subalgebra $\mathcal{B}$ in order to guarantee uniqueness of the C$^*$-norm. Part of its proof adapts an argument which may be found in \cite[Proposition 6.7(b)]{blackadarcuntz} (see also \cite[Lemma 2]{batty}):

\begin{theorem} \label{uniq} Let $\mathcal{B}$ be a dense $*$-subalgebra of a C$^*$-algebra $\mathcal{A}$, closed under the C$^\infty$-functional calculus of $\mathcal{A}$. Then the only C$^*$-norm which may possibly be defined on $\mathcal{B}$ is the restriction of $\|\, \cdot \,\|_\mathcal{A}$.
\end{theorem}

\begin{proof} Let us show that the hypotheses (1) and (2) of Theorem \ref{uniq2} are verified, beginning with (1). Let us prove that, for all $b \in \mathcal{B}$, the equality $\sigma_{\dot{\mathcal{B}}}(b^*b) = \sigma_{\dot{\mathcal{A}}}(b^*b)$ of spectra holds. Fix $b \in \mathcal{B}$ and $\lambda \in \mathbb{C} \backslash \sigma_{\dot{\mathcal{A}}}(b^*b)$. Then by compactness of the spectrum $\sigma_{\dot{\mathcal{A}}}(b^*b)$ there must be an open set $V \subseteq \mathbb{C}$ such that $\lambda \in \overline{V} \subseteq \mathbb{C} \backslash \sigma_{\dot{\mathcal{A}}}(b^*b)$. Therefore, the function $f \colon \mu \longmapsto (\lambda - \mu)^{-1}$ is well-defined and smooth on the open subset $U := \mathbb{R} \cap (\mathbb{C} \backslash \overline{V})$ of $\mathbb{R}$, which contains $\sigma_{\dot{\mathcal{A}}}(b^*b)$. Hence, $(\lambda 1_{\dot{\mathcal{A}}} - b^*b)^{-1} = f(b^*b) \in \dot{\mathcal{B}}$. This proves the inclusion $\sigma_{\dot{\mathcal{B}}}(b^*b) \subseteq \sigma_{\dot{\mathcal{A}}}(b^*b)$ and, since the reverse inclusion is automatic, we have proved the desired statement. Since the equality $\sigma_{\dot{\mathcal{B}}}(b^*b) = \sigma_{\dot{\mathcal{A}}}(b^*b)$ of spectra trivially implies the equality $r_\mathcal{B}(b^*b) = r_\mathcal{A}(b^*b)$ of spectral radii, we have shown that (1) holds.

To prove (2), we first assume that $\mathcal{B}$ and $\mathcal{A}$ are unital algebras sharing the same unit. To show that for every $x \in \mathcal{I}$ and every $\epsilon > 0$, there exists $z \in \mathcal{I} \cap \mathcal{B}$ such that $\|x - z\|_\mathcal{A} < \epsilon$, we may assume without loss of generality that $x^* = x$ (otherwise, apply the following argument to $(x + x^*)/2$ and $(x - x^*)/(2i)$, using that $\mathcal{I}$ is $*$-invariant \cite[Theorem 3.1.3, p.~79]{murphy}). Thus fix an element $x = x^*$ in $\mathcal{I}$ and $\epsilon > 0$. By the denseness hypothesis there exists an element $y$ in $\mathcal{B}$, which once again without loss of generality may be assumed to be self-adjoint, such that $\|x - y\|_\mathcal{A} < \epsilon/3$. Now let $0 \leqslant \chi \leqslant 1$ be a smooth function on $\mathbb{R}$ with support contained in the interval $[-2 \epsilon/3, 2 \epsilon/3]$ such that $\chi(t) = 1$, for all $t \in [-\epsilon/3, \epsilon/3]$. Then the function $f$ defined by $f(t) := t(1 - \chi(t))$ satisfies $\sup_{t \in \mathbb{R}} |f(t) - t| \leqslant 2 \epsilon/3$, so the continuous functional calculus of $\mathcal{A}$ implies that $\|f(y) - y\|_\mathcal{A} \leqslant 2 \epsilon / 3$. Therefore, since $\mathcal{B}$ is closed under the C$^\infty$-functional calculus of $\mathcal{A}$, $f(y)$ is a self-adjoint element of $\mathcal{B}$ such that $\|f(y) - x\|_\mathcal{A} < \epsilon$. To see that $f(y)$ also belongs to $\mathcal{I}$, note that if $\pi \colon \mathcal{A} \longrightarrow \mathcal{A} / \mathcal{I}$ is the canonical quotient map, then $\|\pi(y)\|_{\mathcal{A} / \mathcal{I}} = \|\pi(y - x)\|_{\mathcal{A} / \mathcal{I}} \leqslant \|y - x\|_\mathcal{A} < \epsilon / 3$. This shows, in particular, that $\sigma_{\mathcal{A} / \mathcal{I}}(\pi(y)) \subseteq [-\epsilon/3, \epsilon/3]$, so we conclude that $\pi(f(y)) = f(\pi(y)) = 0$, since $f$ vanishes on $\sigma_{\mathcal{A} / \mathcal{I}}(\pi(y))$. This proves that $f(y)$ belongs to $\mathcal{I} \cap \mathcal{B}$, establishing the density claim. 

Now, we deal with the general case. By what we have proved, $\mathcal{I} \cap \tilde{\mathcal{B}}$ is dense in $\mathcal{I}$, since every closed ideal in $\mathcal{A}$ is a closed ideal in $\tilde{\mathcal{A}}$ (here we are making the usual identification of $\mathcal{I}$ with its image in $\tilde{\mathcal{A}}$ via the canonical inclusion $\mathcal{A} \hookrightarrow \tilde{\mathcal{A}}$). Fix $x \in \mathcal{I}$ and let $((x_n, \lambda_n))_{n \in \mathbb{N}}$ be a sequence in $\mathcal{I} \cap \tilde{\mathcal{B}}$ converging to $x$, where $x_n \in \mathcal{B}$ and $\lambda_n \in \mathbb{C}$, for all $n \in \mathbb{N}$. To establish that $\mathcal{I} \cap \mathcal{B}$ is dense in $\mathcal{I}$ we shall prove that $(x_n)_{n \in \mathbb{N}}$ also converges to $x$. By the definition of the C$^*$-norm of $\tilde{\mathcal{A}}$, it follows that $(\lambda_n)_{n \in \mathbb{N}}$ is a Cauchy sequence, so it converges to a certain $\lambda \in \mathbb{C}$. This implies that $(x_n)_{n \in \mathbb{N}}$ converges to some $y \in \mathcal{A}$, from which it follows that $(x, 0) = (y, \lambda)$. Consequently, $x = y$ and $\lambda = 0$, which proves the desired claim. 

Therefore, uniqueness of the C$^*$-norm on $\mathcal{B}$ is a consequence of Theorem \ref{uniq2}.
\end{proof}

\begin{remark}
We note that, if we substitute $b^*b \in \mathcal{B}$ by a self-adjoint element $b = b^* \in \dot{\mathcal{B}}$ in the first paragraph of the proof of Theorem \ref{uniq} we can establish, with an easy adaptation of the arguments, the following fact: \textit{if $\mathcal{B}$ is a dense $*$-subalgebra of a C$^*$-algebra $\mathcal{A}$, closed under the C$^\infty$-functional calculus of $\mathcal{A}$, then $\mathcal{B}$ is \textit{real spectrally invariant} in $\mathcal{A}$.}
\end{remark}

\begin{remark}
Before carrying on, we would like to point out that, although the hypothesis of being closed under the C$^\infty$-functional calculus is sufficient to guarantee uniqueness of the C$^*$-norm, it is by no means necessary. In fact, let $\mathcal{A} = C(\mathbb{T})$ be the C$^*$-algebra of $(2 \pi)$-periodic complex-valued continuous functions on $\mathbb{R}$, equipped with the C$^*$-norm
\begin{equation*}
\|\, \cdot \,\|_\infty \colon f \longmapsto \|f\|_\infty := \sup_{t \in [- \pi, \pi]} |f(t)|,
\end{equation*}
and $\mathcal{B} = A(\mathbb{T})$ be the $*$-subalgebra of $C(\mathbb{T})$ consisting of functions having an absolutely convergent Fourier series. Then $A(\mathbb{T})$ is dense in $C(\mathbb{T})$, because it contains the trigonometric polynomials, which form a dense $*$-subalgebra of $C(\mathbb{T})$. Moreover, $A(\mathbb{T})$ is, according to the terminology in \cite[Definition (3), p.~269]{kissin}, locally normal in $C(\mathbb{T})$ \cite[Remark (1), p.~275]{kissin}, so \cite[Theorem 13(i), p.~274]{kissin} shows that the hypothesis (2) in Theorem \ref{uniq2} is satisfied. On the other hand, as a consequence of Wiener's theorem \cite[Theorem 5.51, p.~140]{maccluer}, $A(\mathbb{T})$ is spectrally invariant in $C(\mathbb{T})$, which immediately implies hypothesis (1) of Theorem \ref{uniq2}. Therefore, there exists only one C$^*$-norm on $A(\mathbb{T})$, which is obtained by restricting $\|\, \cdot \,\|_\infty$ to $A(\mathbb{T})$. However, as noted in \cite[Remark (1), p.~275]{kissin}, $A(\mathbb{T})$ is not closed under the C$^\infty$-functional calculus of $C(\mathbb{T})$ (see \cite[pp.~80--82]{kahane}, as well as \cite{katznelsoncomptes} and \cite{rudinstrongconverse}). This observation shows that the converse of Theorem \ref{uniq} does not hold, in general.
\end{remark}

Next, we would like to make a few comments about the families of seminorms we shall employ to define the topologies of our Fr\'echet $*$-algebras. The topology of every Fr\'echet $*$-algebra $\mathcal{B}$ can be generated by an increasing sequence of $*$-seminorms $(p_m)_{m \in \mathbb{N}}$ \cite[Theorem 3.7, p.~32]{fragoulopoulou}, meaning that $p_{m_1}(b) \leqslant p_{m_2}(b)$, for all $b \in \mathcal{B}$, whenever $m_1, m_2 \in \mathbb{N}$ satisfy $m_1 \leqslant m_2$. Sometimes, such a topology can even be generated by a family of submultiplicative $*$-seminorms, but not all Fr\'echet $*$-algebras have this property: those that do are often called \textit{Arens-Michael $*$-algebras} \cite[Definition 3.5, p.~30]{fragoulopoulou} (see also the paragraph right before \cite[Proposition 2.3]{michael}). Indeed, the continuity assumption on the multiplication of a Fr\'echet $*$-algebra $\mathcal{B}$ whose topology is generated by an increasing sequence $(p_m)_{m \in \mathbb{N}}$ of $*$-seminorms does not in itself force these to be submultiplicative; rather, it only means that, for each $m \in \mathbb{N}$, there exist $C_m > 0$ and $m' \in \mathbb{N}$ such that
\begin{equation}
 p_m(b_1 b_2) \leqslant C_m \, p_{m + m'}(b_1) \, p_{m + m'}(b_2), \qquad \text{for all } b_1, b_2 \in \mathcal{B},
\end{equation}
and in order for the $*$-seminorm $p_m$ to be submultiplicative, this property would have to hold with $C_m = 1$ and $m' = 0$.

Now, we introduce a central notion for the investigations of the present paper:

\begin{definition} \label{diffnorm} Let $\mathcal{B}$ be a unital C$^*$-normed algebra -- in other words, $\mathcal{B}$ is a (not necessarily complete) unital $*$-algebra whose topology is generated by the C$^*$-norm $\|\, \cdot \,\|_\mathcal{B}$. According to \cite[Definition 3.1]{bhattdiff}, a \textit{differential seminorm} on $\mathcal{B}$ is a map $T \colon b \longmapsto (T_k(b))_{k \in \mathbb{N}}$ on $\mathcal{B}$ assuming values in sequences of non-negative real numbers such that: (1) each $T_k$ is a $*$-seminorm; (2) $T_0(b) \leqslant c \|b\|_\mathcal{B}$, for some $c > 0$ and all $b \in \mathcal{B}$; (3) we have \begin{equation*} T_k(a b) \leqslant \sum_{i + j = k} T_i(a) T_j(b), \qquad a, b \in \mathcal{B},\end{equation*} for all $k \in \mathbb{N}$ -- note that this forces the first seminorm, $T_0$, to be submultiplicative. If $T(b) = 0$ implies $b = 0$, then $T$ is said to be a \textit{differential norm}. \end{definition}

In the examples of interest to us, $T_0$ will always be equal to $\|\, \cdot \,\|_\mathcal{B}$, so $T$ will be a differential norm and the underlying topology generated by the sequence $(T_k)_{k \in \mathbb{N}}$ of $*$-seminorms will always be Hausdorff.

If $T \colon b \longmapsto (T_k(b))_{k \in \mathbb{N}}$ is a differential seminorm on $\mathcal{B}$, it is easy to see that setting \begin{equation} \label {submdiff} s_m(b) = \sum_{k = 0}^m T_k(b) \qquad b \in \mathcal{B},\end{equation} produces an increasing sequence $(s_m)_{m \in \mathbb{N}}$ of submultiplicative $*$-seminorms on $\mathcal{B}$ generating the same topology as the original sequence $(T_k)_{k \in \mathbb{N}}$.

With all of these preliminaries out of the way, let us now come to concrete realizations of the structures discussed in this section by function algebras equipped with the deformed product.

\section{Rieffel's Function Algebras} \label{rieffel}

Let $\mathcal{C}$ be a C$^*$-algebra. Define $\mathcal{S}^\mathcal{C}(\mathbb{R}^n)$ as the space of $\mathcal{C}$-valued Schwartz functions or, in other words, the $\mathcal{C}$-valued smooth functions on $\mathbb{R}^n$ which, together with all of their partial derivatives, are rapidly decreasing at infinity. Also, define $\mathcal{B}^\mathcal{C}(\mathbb{R}^n)$ as the space of $\mathcal{C}$-valued bounded smooth functions on $\mathbb{R}^n$ whose partial derivatives of all orders are also bounded (when $\mathcal{C} = \mathbb{C}$, we will write simply $\mathcal{S}(\mathbb{R}^n)$ and $\mathcal{B}(\mathbb{R}^n)$, respectively).

We can define two ``$L^2$-type'' norms on $\mathcal{S}^\mathcal{C}(\mathbb{R}^n)$, namely \begin{equation} \|f\|_{L^2} := \left( \int_{\mathbb{R}^n} \|f(x)\|_\mathcal{C}^2 \, dx \right)^{1/2} = \left( \int_{\mathbb{R}^n} \|f(x)^* f(x)\|_\mathcal{C} \, dx \right)^{1/2}, \qquad f \in \mathcal{S}^\mathcal{C}(\mathbb{R}^n), \end{equation} and \begin{equation} \|f\|_2 := \left\| \int_{\mathbb{R}^n} f(x)^*f(x) \, dx \right\|_\mathcal{C}^{1/2}, \qquad f \in \mathcal{S}^\mathcal{C}(\mathbb{R}^n). \end{equation} Clearly, $\|f\|_2 \leqslant \|f\|_{L^2}$, for all $f \in \mathcal{S}^\mathcal{C}(\mathbb{R}^n)$. The Banach space completion $E_n$ of $\mathcal{S}^\mathcal{C}(\mathbb{R}^n)$ with respect to the norm $\|\, \cdot \,\|_2$ possesses the structure of a \textit{Hilbert C$^*$-module} \cite{lance}, with subjacent $\mathcal{C}$-valued inner product \cite[p.~2]{lance} obtained as the continuous extension of the map \begin{equation*} (f, g) \longmapsto \int_{\mathbb{R}^n} f(x)^*g(x) \, dx, \qquad (f, g) \in \mathcal{S}^\mathcal{C}(\mathbb{R}^n) \times \mathcal{S}^\mathcal{C}(\mathbb{R}^n) \end{equation*} to $E_n \times E_n$. This $\mathcal{C}$-valued inner product will be denoted in what follows by $\langle \, \cdot \,, \cdot \, \rangle_{E_n}$ or, when there is no risk of confusion, simply by $\langle \, \cdot \,, \cdot \, \rangle$. Following \cite[p.~9]{lance}, we will denote the C$^*$-algebra of (bounded) \textit{adjointable operators} on the Hilbert $\mathcal{C}$-module $E_n$, equipped with the usual operator C$^*$-norm $\|\, \cdot \,\|$, by $\mathcal{L}_\mathcal{C}(E_n)$.

In order to say a few words about the Banach space completion of $\mathcal{S}^\mathcal{C}(\mathbb{R}^n)$ with respect to the norm $\|\, \cdot \,\|_{L^2}$, we first need to fix some notations: if $\lambda$ denotes the Lebesgue measure on $\mathbb{R}^n$, we will say that a function $f \colon \mathbb{R}^n \longrightarrow \mathcal{C}$ is $\lambda$-simple if $f(x) = \sum_{j = 1}^N 1_{B_j}(x) \, c_j$, for some fixed $N > 0$ and all $x \in \mathbb{R}^n$, where $c_j$ are elements of $\mathcal{C}$ and $1_{B_j}$ are indicator functions of Borel-measurable subsets $B_j$ of $\mathbb{R}^n$ such that $\lambda(B_j) < + \infty$, for all $1 \leqslant j \leqslant N$ \cite[Definition 1.1.13, p.~8]{analysis-bochner}. Moreover, we will say that a function $f \colon \mathbb{R}^n \longrightarrow \mathcal{C}$ is strongly $\lambda$-measurable if it is the $\lambda$-almost everywhere pointwise limit of a sequence of $\lambda$-simple functions \cite[Definition 1.1.14, p.~8]{analysis-bochner}. With these terminologies in mind, we define $L^2(\mathbb{R}^n, \mathcal{C})$ as the space of equivalence classes of strongly $\lambda$-measurable square-integrable $\mathcal{C}$-valued functions on $\mathbb{R}^n$ \cite[Definition 1.2.15, p.~21]{analysis-bochner}, which is the Banach space completion of $\mathcal{S}^\mathcal{C}(\mathbb{R}^n)$ with respect to the norm $\|\, \cdot \,\|_{L^2}$. In fact, as noted in Lemma \ref{embedding} of the Appendix, one may adapt the proof of Lemma \ref{toscano-merklen1} to show that $\mathcal{S}^\mathcal{C}(\mathbb{R}^n)$ is dense in $(L^2(\mathbb{R}^n, \mathcal{C}), \|\, \cdot \,\|_{L^2})$. The space $L^2(\mathbb{R}^n, \mathcal{C})$ is continuously embedded in $E_n$ as a dense subspace, a fact which will play an important role in the proof of Theorem \ref{calderonvaillancourt}; see Appendix \ref{appendixd}.

On the other hand, $\mathcal{S}^\mathcal{C}(\mathbb{R}^n)$ and $\mathcal{B}^\mathcal{C}(\mathbb{R}^n)$ become Fr\'echet spaces when equipped with the sequences of norms defined by \begin{equation} \label{normss} \|f\|_{\mathcal{S}^\mathcal{C}, m} = \max_{|\alpha| \leqslant m} \sup_{x \in \mathbb{R}^n} (1 + |x|^2)^{m/2} \|\partial^\alpha f(x)\|_\mathcal{C}, \qquad f \in \mathcal{S}^\mathcal{C}(\mathbb{R}^n), \, m \in \mathbb{N} \end{equation} and \begin{equation} \label{normsb} \|f\|_{\mathcal{B}^\mathcal{C}, m} = \max_{|\alpha| \leqslant m} \sup_{x \in \mathbb{R}^n} \|\partial^\alpha f(x)\|_\mathcal{C}, \qquad f \in \mathcal{B}^\mathcal{C}(\mathbb{R}^n), \, m \in \mathbb{N}, \end{equation} respectively (we shall use the simplified symbol $|\, \cdot \,|$ for the standard Euclidean norm and, below, a dot for the standard Euclidean scalar product in $\mathbb{R}^n$: $|x| := (\sum_{k = 1}^n |x_k|^2)^{1/2}$, $x \cdot y := \sum_{k = 1}^n x_k \, y_k$).

Now fix a skew-symmetric linear transformation $J$ on $\mathbb{R}^n$ and $f \in \mathcal{B}^\mathcal{C}(\mathbb{R}^n)$. Then it is shown in reference \cite{rieffel} that the linear operator defined by the (iterated) integral \begin{equation} \label{rieffelop} L_f(g)(x) := \int_{\mathbb{R}^n} \left( \int_{\mathbb{R}^n} f(x + Ju) \, g(x + v) \, e^{2 \pi i u \cdot v} \, dv \right) du, \qquad g \in \mathcal{S}^\mathcal{C}(\mathbb{R}^n), \, x \in \mathbb{R}^n, \end{equation} maps $\mathcal{S}^\mathcal{C}(\mathbb{R}^n)$ into $\mathcal{S}^\mathcal{C}(\mathbb{R}^n)$ \cite[Proposition 3.3, p.~25]{rieffel}, satisfies $\langle L_f(g), h \rangle = \langle g, L_{f^*}(h) \rangle$, for all $g, h \in \mathcal{S}^\mathcal{C}(\mathbb{R}^n)$, \cite[Proposition 4.2, p.~30]{rieffel} and extends to a bounded operator on the Hilbert $\mathcal{C}$-module $E_n$ \cite[Theorem 4.6 \& Corollary 4.7, p.~34]{rieffel}. By the continuity of the $\mathcal{C}$-valued inner product we see that this extension, also denoted by $L_f$, is an adjointable operator on $E_n$ satisfying $(L_f)^* = L_{f^*}$. Moreover, for all $f_1, f_2 \in \mathcal{B}^\mathcal{C}(\mathbb{R}^n)$, we have the identity \begin{equation} \label{interplay} L_{f_1} L_{f_2} = L_{f_1 \times_J f_2}, \end{equation} where $\times_J$ is \textit{Rieffel's deformed product} \cite[p.~23]{rieffel}, defined by the (oscillatory) integral \begin{equation} \label{rieffelprod} (f_1 \times_J f_2)(x) := \int_{\mathbb{R}^n} \int_{\mathbb{R}^n} f_1(x + Ju) \, f_2(x + v) \, e^{2 \pi i u \cdot v} \, dv \, du, \qquad x \in \mathbb{R}^n. \end{equation} Actually, as will be discussed in more detail below, the operator $L_f$ is a pseudodifferential operator with symbol $(x, \xi) \longmapsto f(x - J\xi / (2 \pi))$. The interplay given by Equation \eqref{interplay} between the algebra of pseudodifferential operators $L_f$'s and the algebra $\mathcal{B}^\mathcal{C}(\mathbb{R}^n)$, equipped with the product $\times_J$, motivates the following definition:

\begin{definition} \label{rieffelalgebras} The function algebras obtained by equipping the Fr\'echet spaces $\mathcal{S}^\mathcal{C}(\mathbb{R}^n)$ and $\mathcal{B}^\mathcal{C}(\mathbb{R}^n)$ with the deformed product $\times_J$ above, instead of the usual pointwise product, and with the involution operation defined pointwise, via the involution of $\mathcal{C}$, will be denoted by $\mathcal{S}_J^\mathcal{C}(\mathbb{R}^n)$ and $\mathcal{B}_J^\mathcal{C}(\mathbb{R}^n)$, respectively. Also, $\overline{\mathcal{S}_J^\mathcal{C}(\mathbb{R}^n)}$ and $\overline{\mathcal{B}_J^\mathcal{C}(\mathbb{R}^n)}$ will denote their completions with respect to the operator C$^*$-norms $\|\, \cdot \,\|_{\mathcal{S}_J^\mathcal{C}}$ and $\|\, \cdot \,\|_{\mathcal{B}_J^\mathcal{C}}$, respectively, which are defined via the faithful $*$-homomorphism $f \longmapsto L_f$ \cite[Definition 4.8, p.~35]{rieffel} of $\mathcal{B}_J^\mathcal{C}(\mathbb{R}^n)$ into $\mathcal{L}_\mathcal{C}(E_n)$: \begin{equation*} \|f\|_{\mathcal{S}_J^\mathcal{C}} := \|L_f\|, \text{ for } f \in \mathcal{S}_J^\mathcal{C}(\mathbb{R}^n) \quad \text{and} \quad \|f\|_{\mathcal{B}_J^\mathcal{C}} := \|L_f\|, \text{ for } f \in \mathcal{B}_J^\mathcal{C}(\mathbb{R}^n); \end{equation*} note that $\|\, \cdot \,\|_{\mathcal{S}_J^\mathcal{C}}$ is just the restriction of $\|\, \cdot \,\|_{\mathcal{B}_J^\mathcal{C}}$ to $\mathcal{S}_J^\mathcal{C}(\mathbb{R}^n)$. Accordingly, the $*$-algebras of pseudodifferential operators $\mathcal{S}_J^\mathcal{C}$ and $\mathcal{B}_J^\mathcal{C}$, with the usual multiplication given by composition, are defined by $\mathcal{S}_J^\mathcal{C} := \left\{L_f: f \in \mathcal{S}_J^\mathcal{C}(\mathbb{R}^n)\right\}$ and $\mathcal{B}_J^\mathcal{C} := \left\{L_f: f \in \mathcal{B}_J^\mathcal{C}(\mathbb{R}^n)\right\}$. \end{definition}

Observe that all of the Fr\'echet $*$-algebras $\mathcal{B}_J^\mathcal{C}(\mathbb{R}^n)$ are represented on the \textit{same} module $E_n$, \textit{independently} of the skew-symmetric linear transformation $J$ on $\mathbb{R}^n$. We also caution the reader not to confuse the operator C$^*$-norms $\|\, \cdot \,\|_{\mathcal{S}_J^\mathcal{C}}$ and $\|\, \cdot \,\|_{\mathcal{B}_J^\mathcal{C}}$ with the ``sup norms'' $\|\, \cdot \,\|_{\mathcal{S}^\mathcal{C}, 0}$ and $\|\, \cdot \,\|_{\mathcal{B}^\mathcal{C}, 0}$. In fact, nothing guarantees, for a general $J$, that these sup norms are C$^*$-norms with respect to the deformed product $\times_J$. Of course, they are when $J = 0$, because then the deformed product reduces to the usual pointwise product given by $(fg)(x) := f(x) g(x)$, for all $x \in \mathbb{R}^n$ \cite[Corollary 2.8, p.~13]{rieffel} and, if in addition, $\mathcal{C}$ is the field $\mathbb{C}$ of complex numbers, then $\mathcal{S}_J^\mathcal{C}(\mathbb{R}^n)$ and $\mathcal{B}_J^\mathcal{C}(\mathbb{R}^n)$ are just the usual commutative Fr\'echet $*$-algebras of complex-valued functions, with $\|\, \cdot \,\|_{\mathcal{S}_J^\mathcal{C}} = \|\, \cdot \,\|_{\mathcal{S}^\mathcal{C}, 0}$ and $\|\, \cdot \,\|_{\mathcal{B}_J^\mathcal{C}} = \|\, \cdot \,\|_{\mathcal{B}^\mathcal{C}, 0}$; later, in Proposition \ref{sup-op}, we will extend these equalities of norms to the case when $\mathbb{C}$ is replaced by a general C$^*$-algebra $\mathcal{C}$. But for a general $J$, we expect these equalities to break down, and so one of our main concerns in what follows will be to construct a sequence of $*$-norms generating the topology of $\mathcal{B}_J^\mathcal{C}(\mathbb{R}^n)$ which is well-behaved with respect to the deformed product $\times_J$.

In the remainder of this section, we will first construct a differential norm $T \colon f \longmapsto (T_k(f))_{k \in \mathbb{N}}$ on $\mathcal{B}_J^\mathcal{C}(\mathbb{R}^n)$ generating its natural Fr\'echet topology and satisfying $T_0 = \|\, \cdot \,\|_{\mathcal{B}_J^\mathcal{C}}$. As corollaries, we will show existence of a unique C$^*$-norm on $\mathcal{B}_J^\mathcal{C}(\mathbb{R}^n)$ and the property of spectral invariance of $\mathcal{B}_J^\mathcal{C}(\mathbb{R}^n)$ in its C$^*$-completion. These results will be derived under the assumption that $\mathcal{C}$ is unital, but uniqueness of the C$^*$-norm will then be shown to hold even when $\mathcal{C}$ is not unital. Once we are done with the algebra $\mathcal{B}_J^\mathcal{C}(\mathbb{R}^n)$, we will adapt some of our results to obtain similar corollaries for the algebra $\mathcal{S}_J^\mathcal{C}(\mathbb{R}^n)$.

\subsection*{Pseudodifferential operators with \texorpdfstring{$\mathcal{C}$}{C}-valued symbols}$\ $

$\ $

Let $\mathcal{C}$ be a C$^*$-algebra. In order to attain some of our goals, we will use features of Lie group representation theory for the Heisenberg group of dimension $2n + 1$, defined as \begin{equation*} H_{2n + 1}(\mathbb{R}) = \left\{\begin{bmatrix}
1 & \texttt{a}^T & c\\
0 & I_n & -\texttt{b}\\
0 & 0 & 1
\end{bmatrix}: \texttt{a}, \texttt{b} \in \mathbb{R}^n, c \in \mathbb{R}\right\}, \end{equation*} where the product is just standard matrix multiplication and $I_n$ denotes the identity matrix of $M_n(\mathbb{R})$. It admits a strongly continuous unitary representation $U$ on the Hilbert $\mathcal{C}$-module $E_n$ given by \begin{equation*} U_{\texttt{a}, \texttt{b}, c}(f)(x) := U\begin{bmatrix}
1 & \texttt{a}^T & c\\
0 & I_n & -\texttt{b}\\
0 & 0 & 1
\end{bmatrix}(f)(x) := e^{ic} e^{i \texttt{b} \cdot x} f(x - \texttt{a}), \qquad f \in \mathcal{S}^\mathcal{C}(\mathbb{R}^n), \, x \in \mathbb{R}^n, \end{equation*} where the term ``unitary'' is in the sense of Hilbert C$^*$-modules \cite[p.~24]{lance} from which we can construct a corresponding ``adjoint'' representation of the Heisenberg group $H_{2n + 1}(\mathbb{R})$ on the C$^*$-algebra of adjointable operators $\mathcal{L}_\mathcal{C}(E_n)$ by \begin{equation*} \text{Ad}\,U \colon \begin{bmatrix}
1 & \texttt{a}^T & c\\
0 & I_n & -\texttt{b}\\
0 & 0 & 1
\end{bmatrix} \longmapsto (\text{Ad}\,U)(\texttt{a}, \texttt{b}, c)(\, \cdot \,) := U_{\texttt{a}, \texttt{b}, c} \, (\, \cdot \,) \, (U_{\texttt{a}, \texttt{b}, c})^{-1}. \end{equation*} Note that $(\text{Ad}\,U)(\texttt{a}, \texttt{b}, c)$ does not depend on the real variable $c$ -- so we will simply write $(\text{Ad}\,U)(\texttt{a}, \texttt{b})$ -- and that $\text{Ad}\,U$, in contrast to $U$, is not strongly continuous; this means that the C$^*$-subalgebra $C(\text{Ad}\,U)$ of continuous elements for $\text{Ad}\,U$ is, in general, properly contained in $\mathcal{L}_\mathcal{C}(E_n)$. Next, let $C^\infty(\text{Ad}\,U)$ be the Fr\'echet $*$-algebra of smooth elements for the representation $\text{Ad}\,U$. Denoting by $\delta_j$ the $j^\text{th}$ infinitesimal generator of the representation $\text{Ad}\,U$, $1 \leqslant j \leqslant 2n$, we have that $\delta_j(A) = \partial_j[(\text{Ad}\,U)(\texttt{a}, \texttt{b})(A)]|_{\texttt{a} = \texttt{b} = 0}$, for all $A$ belonging to $C^\infty(\text{Ad}\,U)$. The Fr\'echet topology on $C^\infty(\text{Ad}\,U)$ is defined by the family \begin{equation} \label{smoothfrechet} \left\{\rho_m: m \in \mathbb{N}\right\} \end{equation} of norms, where \begin{equation*} \rho_0(A) := \|A\|, \quad \delta_0 := I \end{equation*} and \begin{equation*} \rho_m(A) := \max \left\{\|(\delta_{i_1} \ldots \delta_{i_m})A\|: 0 \leqslant i_j \leqslant 2n\right\}, \qquad A \in C^\infty(\text{Ad}\,U), \, m \geqslant 1. \end{equation*}

Working with pseudodifferential operators involves the Fourier transform $\mathcal{F}$, sometimes also denoted by $\hat{\, \cdot \,}$ and defined by \begin{equation*} \mathcal{F}(g)(\xi) := \frac{1}{(2 \pi)^{n / 2}} \int_{\mathbb{R}^n} e^{-i s \cdot \xi} \, g(s) \, ds, \qquad g \in \mathcal{S}^\mathcal{C}(\mathbb{R}^n), \, \xi \in \mathbb{R}^n. \end{equation*} It is a continuous linear operator on the Fr\'echet space $\mathcal{S}^\mathcal{C}(\mathbb{R}^n)$. The same is true for the inverse Fourier transform $\mathcal{F}^{-1}$ on $\mathcal{S}^\mathcal{C}(\mathbb{R}^n)$, which is defined by $\mathcal{F}^{-1}(g)(x) := \mathcal{F}(g)(-x)$. For more details about the Fourier transform, see \cite[Proposition 2.4.22, p.~117]{analysis-bochner}; for general facts about Bochner integrals, see \cite[Chapter 1]{analysis-bochner}. We shall also use the following generalized version of Plancherel's Theorem for $E_n$, which follows from Fubini's Theorem. For any $u, v \in \mathcal{S}^\mathcal{C}(\mathbb{R}^n)$, we have \begin{align} \label{plancherel} (2 \pi)^{n/2} \langle \mathcal{F}(u), v \rangle &= \int_{\mathbb{R}^n} \left(\int_{\mathbb{R}^n} e^{-i x \cdot y} u(y) \, dy\right)^* v(x) \, dx \\ &= \int_{\mathbb{R}^n} u(y)^* \left(\int_{\mathbb{R}^n} e^{i x \cdot y} v(x) \, dx\right) dy = (2 \pi)^{n/2} \langle u, \mathcal{F}^{-1}(v) \rangle, \nonumber \end{align} just as in \cite[Proposi\c c\~ao B.3]{merklentese}; substituting $v = \mathcal{F}(u)$ in the above equality shows that $\mathcal{F}$ uniquely extends by continuity to an isometry on $E_n$. By the continuity of the $\mathcal{C}$-valued inner product, we see that $\langle \mathcal{F}(u), v \rangle = \langle u, \mathcal{F}^{-1}(v) \rangle$ also holds for $u, v \in E_n$. In particular, we get that $\mathcal{F}$ is an adjointable operator on $E_n$ with $\mathcal{F}^* = \mathcal{F}^{-1}$.

Given $a \in \mathcal{B}^\mathcal{C}(\mathbb{R}^{2n})$ one may define a pseudodifferential operator $\text{Op}(a) \colon \mathcal{S}^\mathcal{C}(\mathbb{R}^n) \longrightarrow \mathcal{S}^\mathcal{C}(\mathbb{R}^n)$ by \begin{equation} \label{op_0} \text{Op}(a)(g)(x) := \frac{1}{(2 \pi)^{n / 2}} \int_{\mathbb{R}^n} e^{i x \cdot \xi} \, a(x, \xi) \, \widehat{g}(\xi) \, d\xi \end{equation} or, more explicitly, by the (iterated) integral \begin{equation} \label{op} \text{Op}(a)(g)(x) := \frac{1}{(2 \pi)^n} \int_{\mathbb{R}^n} \left( \int_{\mathbb{R}^n} e^{i (x - y) \cdot \xi} \, a(x, \xi) \, g(y) \, dy \right) d\xi, \end{equation} for all $x \in \mathbb{R}^n$. A simple calculation shows that every $L_f \in \mathcal{B}_J^\mathcal{C}$ may be written this way, with $a(x, \xi) = f(x - J\xi / (2 \pi))$.

To obtain some information about how the above pseudodifferential operators with $\mathcal{C}$-valued symbols are related to the adjoint action of the Heisenberg group, we need a version of the Calder\'on-Vaillancourt Theorem for the Hilbert C$^*$-module $E_n$. The proof of such a version is the content of \cite[Theorem 2.1]{merklen}, but there seems to be a mistake in the proof, more precisely in the integration by parts at the bottom of page 1281. For this reason, we will give a new proof of that result (which is Theorem \ref{calderonvaillancourt}, below), and with the additional benefit that we do not need to restrict ourselves to \textit{separable} C$^*$-algebras $\mathcal{C}$. The proof of Theorem \ref{calderonvaillancourt} below is based on \cite[Theorem 3.14]{seiler} and on \cite[Cap\'itulo 3]{merklentese}. Just as in \cite[p.~169]{seiler}, for any given $\beta \in \mathbb{N}^n$ and $x \in \mathbb{R}^n$, we adopt the notations \begin{equation*} (i + x)^\beta := \prod_{j = 1}^n (i + x_j)^{\beta_j}, \quad (i + x)^{- \beta} := [(i + x)^\beta]^{-1} \quad \text{and}\end{equation*} \begin{equation*} D_{x_j} := -i \frac{\partial}{\partial x_j}, \quad (i + D_x)^\beta := \prod_{j = 1}^n (i + D_{x_j})^{\beta_j}.\end{equation*}

\begin{theorem} \label{calderonvaillancourt} Let $\mathcal{C}$ be a C$^*$-algebra (unital, or not). Then $\text{Op}(a)$ extends to a bounded operator on $E_n$, for every $a \in \mathcal{B}^\mathcal{C}(\mathbb{R}^{2n})$. More precisely, denoting by $\mathring{\alpha} \in \mathbb{N}^n$ the multiindex $(1, \ldots, 1)$, there exists a constant $C > 0$ such that for every $a \in \mathcal{B}^\mathcal{C}(\mathbb{R}^{2n})$ we have the estimate $\|\text{Op}(a)\| \leqslant C \pi(a)$, where $\pi(a)$ is defined by \begin{equation} \pi(a) := \max_{\beta, \gamma \leqslant \mathring{\alpha}} \sup \left\{\|\partial_x^\beta \partial_\xi^\gamma a(x, \xi)\|_\mathcal{C}: x, \xi \in \mathbb{R}^n\right\}. \end{equation} \end{theorem}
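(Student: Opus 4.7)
The plan is to adapt Cordes's classical approach, as it was done in Seiler and Merklen, by first reducing to Schwartz symbols, then exploiting integration by parts to obtain an integrable kernel estimate, and finally converting it into a norm bound on $E_n$ via a Schur-type inequality adapted to the Hilbert $\mathcal{C}$-module setting.

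First, I would regularize. Fix $\chi \in \mathcal{S}(\mathbb{R}^{2n})$ with $\chi(0,0) = 1$ and set $a_\epsilon(x,\xi) := \chi(\epsilon x, \epsilon \xi)\, a(x,\xi)$. By the Leibniz rule, $\pi(a_\epsilon) \leqslant C_\chi\, \pi(a)$ uniformly in $\epsilon > 0$, and using Equation \eqref{op_0} together with dominated convergence in the Bochner sense one sees that $\text{Op}(a_\epsilon) g \longrightarrow \text{Op}(a) g$ in $\mathcal{S}^\mathcal{C}(\mathbb{R}^n)$ for each $g \in \mathcal{S}^\mathcal{C}(\mathbb{R}^n)$. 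Since $\mathcal{S}^\mathcal{C}(\mathbb{R}^n)$ is dense in $E_n$ (through the chain $\mathcal{S}^\mathcal{C}(\mathbb{R}^n) \hookrightarrow L^2(\mathbb{R}^n,\mathcal{C}) \hookrightarrow E_n$ provided in the appendix), it is enough to establish a uniform estimate $\|\text{Op}(a_\epsilon)\| \leqslant C\, \pi(a)$ and then pass to the limit.

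For such a regularized (Schwartz) symbol, $\text{Op}(a_\epsilon)$ is genuinely an integral operator with $\mathcal{C}$-valued continuous kernel
\begin{equation*}
 k_\epsilon(x,y) \,=\, \frac{1}{(2\pi)^n} \int_{\mathbb{R}^n} e^{i(x - y)\cdot \xi}\, a_\epsilon(x,\xi)\, d\xi.
\end{equation*}
The key observation is that $L_\xi := \prod_{j=1}^n (i + D_{\xi_j})$ acts on $e^{i(x-y)\cdot \xi}$ as multiplication by $(i + x - y)^{\mathring{\alpha}}$, so integration by parts yields
\begin{equation*}
 (i + x - y)^{\mathring{\alpha}}\, k_\epsilon(x,y) \,=\, \frac{1}{(2\pi)^n} \int_{\mathbb{R}^n} e^{i(x - y)\cdot \xi}\, L_\xi^T[a_\epsilon(x,\xi)]\, d\xi,
\end{equation*}
where $L_\xi^T = \prod_j(i - D_{\xi_j})$ produces only derivatives $\partial_\xi^\gamma a_\epsilon$ with $\gamma \leqslant \mathring{\alpha}$. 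An entirely analogous integration by parts in the variable $x$, after rewriting the $\xi$-integral using Fourier duality on the symbol (or equivalently, writing $\text{Op}(a_\epsilon)$ in the dual form via $\hat g$), produces decay in the $y$-variable through derivatives $\partial_x^\beta a_\epsilon$ with $\beta \leqslant \mathring{\alpha}$. Combining both directions of integration by parts gives a kernel estimate
\begin{equation*}
 \|k_\epsilon(x,y)\|_\mathcal{C} \,\leqslant\, \frac{C\,\pi(a_\epsilon)}{\prod_{j=1}^n\bigl(1 + (x_j - y_j)^2\bigr)},
\end{equation*}
which is integrable both in $x$ and in $y$ with a bound independent of $\epsilon$.

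Finally, to convert this into an $E_n$-operator-norm bound, I would invoke a Schur test adapted to Hilbert $\mathcal{C}$-modules: for $f \in \mathcal{S}^\mathcal{C}(\mathbb{R}^n)$, the inner product $\langle \text{Op}(a_\epsilon) f, \text{Op}(a_\epsilon) f \rangle_{E_n}$ is a triple $\mathcal{C}$-valued integral in $k_\epsilon$ and $f$; applying the $C^*$-algebraic Cauchy--Schwarz inequality with weight $\|k_\epsilon(x,y)\|_\mathcal{C}$ and then taking the $\mathcal{C}$-norm yields
\begin{equation*}
 \|\text{Op}(a_\epsilon) f\|_{E_n}^2 \,\leqslant\, \Bigl(\sup_x \int \|k_\epsilon(x,y)\|_\mathcal{C}\, dy\Bigr)\Bigl(\sup_y \int \|k_\epsilon(x,y)\|_\mathcal{C}\, dx\Bigr) \|f\|_{E_n}^2,
\end{equation*}
which together with the kernel estimate above gives $\|\text{Op}(a_\epsilon)\| \leqslant C\, \pi(a_\epsilon) \leqslant C'\, \pi(a)$. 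Passing to the limit $\epsilon \to 0$ and extending by density to $E_n$ concludes the argument. The main obstacle I expect is twofold: first, arranging the double integration by parts so that the factor $\pi(a)$ really involves only multi-indices $\beta, \gamma \leqslant \mathring{\alpha}$ (the sharp Calder\'on--Vaillancourt count) rather than higher-order derivatives; and second, justifying the Schur-type inequality in the Hilbert $\mathcal{C}$-module setting, since the $E_n$-norm is \emph{not} the Bochner $L^2$-norm and the usual scalar Cauchy--Schwarz must be replaced by its $C^*$-valued analogue (hence the role played by the continuous dense embedding $L^2(\mathbb{R}^n,\mathcal{C}) \hookrightarrow E_n$ referred to in the paragraph preceding the statement).
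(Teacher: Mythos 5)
Your strategy breaks down at the kernel estimate, and the failure is structural rather than a matter of bookkeeping. For a symbol that is merely bounded with bounded derivatives, $\text{Op}(a)$ is \emph{not} an integral operator with a locally integrable kernel: the Schwartz kernel is a distribution (for $a \equiv 1_\mathcal{C}$ it is a delta along the diagonal), and this persists uniformly under your regularization. Concretely, after integrating by parts in $\xi$ you get
\begin{equation*}
 \|k_\epsilon(x,y)\|_\mathcal{C} \,\leqslant\, \bigl|(i + x - y)^{-\mathring{\alpha}}\bigr|\, \frac{1}{(2\pi)^n}\int_{\mathbb{R}^n} \bigl\|\textstyle\prod_j (i - D_{\xi_j})\, a_\epsilon(x,\xi)\bigr\|_\mathcal{C}\, d\xi,
\end{equation*}
and the $\xi$-integral is finite only because $a_\epsilon$ is compactly supported; its size blows up as $\epsilon \to 0$, since the integrand is controlled by $\pi(a)$ pointwise but has no decay in $\xi$. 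No finite number of $\xi$-integrations by parts repairs this, so the claimed bound $\|k_\epsilon(x,y)\|_\mathcal{C} \leqslant C\,\pi(a_\epsilon)\prod_j(1 + (x_j - y_j)^2)^{-1}$ with $C$ independent of $\epsilon$ is unobtainable, and the Schur test has nothing to act on. Two further points: even if a kernel bound were available, using only $\gamma \leqslant \mathring{\alpha}$ gives decay $\prod_j(1 + (x_j - y_j)^2)^{-1/2}$, which is not integrable in $x - y$; and integration by parts in $x$ does not produce decay in $y$ at all -- the kernel depends on $y$ only through the phase $e^{i(x-y)\cdot\xi}$, so $x$-derivatives of the symbol enter the sharp count through the dual (Fourier) variable of $x$, not through $y$-decay. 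Finally, a scalar Schur bound with weight $\|k(x,y)\|_\mathcal{C}$ would naturally control the Bochner norm $\|\cdot\|_{L^2}$, and the inequality $\|\cdot\|_2 \leqslant \|\cdot\|_{L^2}$ goes the wrong way to convert that into a bound by $\|f\|_{E_n}$.

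The paper avoids the kernel entirely: for compactly supported symbols it computes the $\mathcal{C}$-valued quadratic form $\langle \hat v, \widehat{Au}\rangle$ as a quadruple oscillatory integral, integrates by parts \emph{both} in $\xi$ (producing $(i + x - y)^{-\mathring{\alpha}}$ against $u$) and in $x$ (producing $(i + \xi - \eta)^{-\mathring{\alpha}}$ against $\hat v$), and then applies the Cauchy--Schwarz inequality for Hilbert C$^*$-modules in $E_{2n}$. The decisive point is that the convolution-type factors $(i + x - y)^{-\mathring{\alpha}} u(y)$ and $(i + \xi - \eta)^{-\mathring{\alpha}}\hat v(\eta)$, integrated out, yield genuine elements of $L^2(\mathbb{R}^{2n},\mathcal{C})$ whose norms are controlled by $\|u\|_2$ and $\|v\|_2$ via a Plancherel argument, while the symbol contributes only through $\sup$-norms of $\partial_x^\gamma\partial_\xi^\delta a$ with $\gamma,\delta \leqslant \mathring{\alpha}$ (this uses the inequality $\rho(d^*c^*cd) \leqslant \|c^*c\|_\mathcal{C}\,\rho(d^*d)$ for states $\rho$, the module replacement for pulling a bounded factor out of an $L^2$ norm). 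The general case is then reached by the cutoff-and-limit argument you proposed, but applied to the quadratic form, where dominated convergence suffices. If you want to salvage a kernel-based route you would need an almost-orthogonality (Cotlar--Stein) or Cordes-type factorization argument instead of a Schur test; the form-plus-module-Cauchy--Schwarz argument is what makes the sharp derivative count $\beta,\gamma \leqslant \mathring{\alpha}$ come out.
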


\begin{proof} Write $A := \text{Op}(a)$, in order to simplify the notation. Suppose first that $a$ is a compactly supported smooth function on $\mathbb{R}^n$, and fix $u, v \in \mathcal{S}^\mathcal{C}(\mathbb{R}^n)$. Let us calculate $\langle \hat{v}, \widehat{Au} \rangle$ noting that, under these hypotheses, we can make free use of Fubini's Theorem and integrate by parts to get \begin{align*} \langle \hat{v}, \widehat{Au} \rangle &= \frac{1}{(2 \pi)^{3n/2}} \int \! \int \! \int \! \int e^{-i x \cdot \eta} \hat{v}(\eta)^* [(i + x - y)^{- \mathring{\alpha}} (i + D_\xi)^{\mathring{\alpha}} e^{i \xi \cdot (x - y)}] \, a(x, \xi) \, u(y) \, d\xi \, dx \, dy \, d\eta \\ &= \frac{1}{(2 \pi)^{3n/2}} \int \! \int \! \int \! \int [(i + \xi - \eta)^{-\mathring{\alpha}}(i + D_x)^{\mathring{\alpha}} e^{i x \cdot (\xi - \eta)}] \, e^{-i \xi \cdot y} (i + x - y)^{- \mathring{\alpha}} \hat{v}(\eta)^* \\ & \hspace{7.5cm} [(i - D_\xi)^{\mathring{\alpha}} a(x, \xi)] u(y) \, d\xi \, dx \, dy \, d\eta \\ &= \frac{1}{(2 \pi)^{3n/2}} \int \! \int \! \int \! \int (i + \xi - \eta)^{-\mathring{\alpha}} e^{i x \cdot (\xi - \eta)} e^{-i \xi \cdot y} \hat{v}(\eta)^* F(y, x, \xi) \, u(y) \, d\xi \, dx \, dy \, d\eta, \end{align*} where $F(y, x, \xi) := (i - D_x)^{\mathring{\alpha}} \left\{(i + x - y)^{- \mathring{\alpha}} [(i - D_\xi)^{\mathring{\alpha}} a(x, \xi)]\right\}$. From the formula \begin{equation*} (i - D_x)^{\mathring{\alpha}}(wz) = \sum_{\gamma \leqslant \mathring{\alpha}} (-1)^{|\gamma|} [(i - D_x)^{\mathring{\alpha} - \gamma} w] D_x^\gamma z, \qquad w \in C^\infty(\mathbb{R}^n), \, z \in \mathcal{B}^\mathcal{C}(\mathbb{R}^n), \end{equation*} we see that \begin{equation} \label{c-s} \langle \hat{v}, \widehat{Au} \rangle = \frac{1}{(2 \pi)^{3n/2}} \sum_{\gamma \leqslant \mathring{\alpha}} \int \! \int e^{i x \cdot \xi} g(x, \xi) [D_x^\gamma (i - D_\xi)^{\mathring{\alpha}} a(x, \xi)] f_\gamma(x, \xi) \, dx \, d\xi, \end{equation} with \begin{align*} f_\gamma(x, \xi) &:= (-1)^{|\gamma|} \int e^{-i \xi \cdot y} (i - D_x)^{\mathring{\alpha} - \gamma} (i + x - y)^{- \mathring{\alpha}} u(y) \, dy, \\ g(x, \xi) &:= \int e^{-i x \cdot \eta} (i + \xi - \eta)^{-\mathring{\alpha}} \hat{v}(\eta)^* \, d\eta.\end{align*} Estimating the expression in Equation \eqref{c-s} will be based on the Cauchy-Schwarz inequality for Hilbert C$^*$-modules \cite[Proposition 1.1, p.~3]{lance}. But first, we want to prove that the functions $g$ and $f_\gamma$ so defined all belong to $L^2(\mathbb{R}^{2n}, \mathcal{C})$ (and hence to $E_{2n}$), so we proceed just as in \cite[Lema 3.17]{merklentese}. Fix $(x, \xi) \in \mathbb{R}^{2n}$. Using the equality \begin{equation*} e^{-i x \cdot \eta} = \frac{(1 - \Delta_\eta)^N}{(1 + |x|^2)^N} \, e^{-i x \cdot \eta}, \qquad \Delta_\eta := \sum_{k = 1}^n \frac{\partial^2}{\partial \eta_k^2}, \qquad N \in \mathbb{N}, \end{equation*} for every $\eta \in \mathbb{R}^n$, and integrating by parts the expression which defines $g$ gives, for every $N \in \mathbb{N}$, the formula \begin{equation*} g(x, \xi) = \frac{1}{(1 + |x|^2)^N} \int e^{-i x \cdot \eta} (1 - \Delta_\eta)^N [(i + \xi - \eta)^{-\mathring{\alpha}} \hat{v}(\eta)^*] \, d\eta. \end{equation*} Therefore, after successive applications of the Leibniz product rule we may write $g(x, \xi)$ as a linear combination of terms of the form \begin{equation*} \frac{1}{(1 + |x|^2)^N} \int e^{-i x \cdot \eta} (i + \xi - \eta)^{- \beta} \partial_\eta^{\beta'} \hat{v}(\eta)^* \, d\eta, \qquad \beta, \beta' \in \mathbb{N}^n, \, \beta \geqslant \mathring{\alpha}, \, \beta' \geqslant 0. \end{equation*} Using Peetre's inequality \cite[(3.6)]{kohn-nirenberg} \begin{equation*} |(i + \xi - \eta)^{- \beta}| = \prod_{k = 1}^n \frac{1}{(1 + |\xi_k - \eta_k|^2)^{\beta_k / 2}} \leqslant 2^{|\beta| / 2} \prod_{k = 1}^n \frac{(1 + |\eta_k|^2)^{\beta_k / 2}}{(1 + |\xi_k|^2)^{\beta_k / 2}} \end{equation*} we obtain, for each $\beta, \beta' \in \mathbb{N}^n$, that the (C$^*$-)norm $\|\, \cdot \,\|_\mathcal{C}$ evaluated on the corresponding term is bounded from above by \begin{equation*} 2^{|\beta| / 2} \frac{1}{(1 + |x|^2)^N} \prod_{k = 1}^n \frac{1}{(1 + |\xi_k|^2)^{\beta_k / 2}} \int (1 + |\eta_k|^2)^{\beta_k / 2} \|\partial_\eta^{\beta'} \hat{v}(\eta)^*\|_\mathcal{C} \, d\eta. \end{equation*} Since $\beta_k \geqslant 1$, for all $1 \leqslant k \leqslant n$, we may choose $N > n/4$ to finally conclude that $g$ belongs to $L^2(\mathbb{R}^{2n}, \mathcal{C})$. By an analogous reasoning, one sees that the same conclusion holds for the functions $f_\gamma$. This implies that, for each $\gamma \leqslant \mathring{\alpha}$, the function $(x, \xi) \longmapsto [D_x^\gamma (i - D_\xi)^{\mathring{\alpha}} a(x, \xi)] f_\gamma(x, \xi)$ also belongs to $L^2(\mathbb{R}^{2n}, \mathcal{C})$, so applying the Cauchy-Schwarz inequality for Hilbert C$^*$-modules yields \begin{equation} \label{cauchyschwartzcalderon} \|\langle \hat{v}, \widehat{Au} \rangle\|_\mathcal{C} \leqslant \frac{1}{(2 \pi)^{3n/2}} \sum_{\gamma \leqslant \mathring{\alpha}} \underbrace{\|g^*\|_2}_{\text{(I)}} \, \underbrace{\|(x, \xi) \longmapsto [D_x^\gamma (i - D_\xi)^{\mathring{\alpha}} a(x, \xi)] f_\gamma(x, \xi)\|_2}_{\text{(II)}}. \end{equation} Let us first estimate (I). Define the function $h(\xi) := (i + \xi)^{- \mathring{\alpha}}$ and, for each fixed $\xi \in \mathbb{R}^n$, define $h_\xi(\eta) := (i + \xi - \eta)^{- \mathring{\alpha}}$, for all $\eta \in \mathbb{R}^n$, so that \begin{equation*} g^*(x, \xi) = \int e^{i x \cdot \eta} \overline{h_\xi(\eta)} \hat{v}(\eta) \, d\eta = (2 \pi)^{n/2} \mathcal{F}(\overline{h_\xi} \cdot \hat{v})(-x) =: G_\xi(x), \end{equation*} for every $x, \xi \in \mathbb{R}^n$. Then \begin{align} \label{estimate_h} & \int_{\mathbb{R}^n} \int_{\mathbb{R}^n} g(x, \xi) g(x, \xi)^* \, dx \, d\xi = \int_{\mathbb{R}^n} \langle G_\xi, G_\xi \rangle_{E_n} \, d\xi = (2 \pi)^n \int_{\mathbb{R}^n} \langle \overline{h_\xi} \cdot \hat{v}, \overline{h_\xi} \cdot \hat{v} \rangle_{E_n} \, d\xi \\ &= \, (2 \pi)^n \int_{\mathbb{R}^n} \left[\int_{\mathbb{R}^n} |h_\xi|^2(\eta) \, \hat{v}(\eta)^* \hat{v}(\eta) \, d\eta\right] d\xi = (2 \pi)^n \int_{\mathbb{R}^n} |h(\xi)|^2 \, d\xi \int_{\mathbb{R}^n} \hat{v}(\eta)^* \hat{v}(\eta) \, d\eta, \nonumber \end{align} so $\|g^*\|_2 = C_1 \|v\|_2$, where $C_1 := (2 \pi)^{n/2} (\int_{\mathbb{R}^n} |h(\xi)|^2 \, d\xi)^{1/2}$. To estimate (II), note that for every positive linear functional $\rho$ on $\mathcal{C}$ and $c, d \in \mathcal{C}$ we have $\rho(d^*c^*cd) \leqslant \|c^*c\|_\mathcal{C} \, \rho(d^*d)$ \cite[Theorem 3.3.7, p.~90]{murphy}, which implies \begin{align*} & \int_{\mathbb{R}^n} \int_{\mathbb{R}^n} \rho \left(f_\gamma(x, \xi)^* [D_x^\gamma (i - D_\xi)^{\mathring{\alpha}} a(x, \xi)]^* [D_x^\gamma (i - D_\xi)^{\mathring{\alpha}} a(x, \xi)] f_\gamma(x, \xi)\right) dx \, d\xi \\ &\leqslant \int_{\mathbb{R}^n} \int_{\mathbb{R}^n} \|D_x^\gamma (i - D_\xi)^{\mathring{\alpha}} a(x, \xi)\|_\mathcal{C}^2 \, \rho \left(f_\gamma(x, \xi)^* f_\gamma(x, \xi)\right) dx \, d\xi \\ &\leqslant \left[\sup_{x, \xi} \left\{\|D_x^\gamma (i - D_\xi)^{\mathring{\alpha}} a(x, \xi)\|_\mathcal{C}\right\}\right]^2 \int_{\mathbb{R}^n} \int_{\mathbb{R}^n} \rho \left(f_\gamma(x, \xi)^* f_\gamma(x, \xi)\right) dx \, d\xi. \end{align*} Therefore \cite[Theorem 3.4.3, p.~95]{murphy}, \begin{align*} & \int_{\mathbb{R}^n} \int_{\mathbb{R}^n} \left\{[D_x^\gamma (i - D_\xi)^{\mathring{\alpha}} a(x, \xi)] f_\gamma(x, \xi)\right\}^* [D_x^\gamma (i - D_\xi)^{\mathring{\alpha}} a(x, \xi)] f_\gamma(x, \xi) dx \, d\xi \\ &\leqslant \left[\sup_{x, \xi} \left\{\|D_x^\gamma (i - D_\xi)^{\mathring{\alpha}} a(x, \xi)\|_\mathcal{C}\right\}\right]^2 \int_{\mathbb{R}^n} \int_{\mathbb{R}^n} f_\gamma(x, \xi)^* f_\gamma(x, \xi) dx \, d\xi, \end{align*} which finally gives us the estimate \begin{equation} \label{calderon_h} \|[D_x^\gamma (i - D_\xi)^{\mathring{\alpha}} a(x, \xi)] f_\gamma(x, \xi)\|_2 \leqslant C_2 \pi(a) \|f_\gamma\|_2, \end{equation} where $C_2 > 0$ is independent of $a$. But \begin{equation*} f_\gamma(x, \xi) = (-1)^{|\gamma|} \int e^{-i \xi \cdot y} (i - D_x)^{\mathring{\alpha} - \gamma} (i + x - y)^{- \mathring{\alpha}} u(y) \, dy = (-1)^{|\gamma|} (2 \pi)^{n/2} \mathcal{F}((i - D_x)^{\mathring{\alpha} - \gamma} h_x \cdot u)(\xi), \end{equation*} for each fixed $x, \xi \in \mathbb{R}^n$. Analogously as in Equation \eqref{estimate_h} we get \begin{equation} \label{estimate_h_2} \|f_\gamma\|_2 \leqslant C_2' \|h\|_2 \|u\|_2, \qquad C_2' > 0. \end{equation} Finally, combining Equations \eqref{cauchyschwartzcalderon}, \eqref{estimate_h}, \eqref{calderon_h} and \eqref{estimate_h_2} gives \begin{equation} \label{compactsupp} \|\langle v, Au \rangle\|_\mathcal{C} = \|\langle \hat{v}, \widehat{Au} \rangle\|_\mathcal{C} \leqslant K \pi(a) \|v\|_2 \, \|u\|_2, \end{equation} for some constant $K > 0$ which is independent of $a$, $u$ and $v$.

Now we turn to the general case where $a \in \mathcal{B}^\mathcal{C}(\mathbb{R}^{2n})$. Let $u, v \in \mathcal{S}^\mathcal{C}(\mathbb{R}^n)$, $0 \leqslant \phi \leqslant 1$ be a compactly supported smooth function on $\mathbb{R}^{2n}$ which equals 1 on a neighborhood of 0 and define, for each $m \in \mathbb{N} \backslash \left\{0\right\}$, the function \begin{equation*} a_m(x, \xi) := \phi \left( \frac{x}{m}, \frac{\xi}{m} \right) \, a(x, \xi), \qquad (x, \xi) \in \mathbb{R}^{2n}. \end{equation*} We are going to show that $\langle v, \text{Op}(b_m)u \rangle$ goes to 0 as $m \rightarrow + \infty$, where $b_m := a - a_m$. Since an application of the Cauchy-Schwarz inequality implies $\|\langle v, \text{Op}(b_m)u \rangle\|_\mathcal{C} \leqslant \|v\|_2 \|\text{Op}(b_m)u\|_2$, it suffices to show that $\|\text{Op}(b_m)u\|_2$ converges to 0, as $m \rightarrow + \infty$. First, note that since $\|e^{i x \cdot \xi} b_m(x, \xi) \hat{u}(\xi)\|_\mathcal{C} \leqslant \sup \left\{\|a(x, \xi)\|_\mathcal{C}: (x, \xi) \in \mathbb{R}^{2n}\right\} \|\hat{u}(\xi)\|_\mathcal{C}$, for every fixed $(x, \xi) \in \mathbb{R}^{2n}$, it follows from the Dominated Convergence Theorem that $[\text{Op}(b_m)u](x)$ converges to 0, as $m \rightarrow + \infty$, for every fixed $x \in \mathbb{R}^n$. Also, there exists a constant $C' > 0$ which is independent of $m$ and of $a$ such that $\pi(a_m) \leqslant C' \pi(a)$, so we have the estimates \begin{align*} \|[\text{Op}(b_m)u](x)\|_\mathcal{C} &\leqslant (2 \pi)^{-n/2} |(i + x)^{- \mathring{\alpha}}| \int_{\mathbb{R}^n} \|(i - D_\xi)^{\mathring{\alpha}}[b_m(x, \xi) \hat{u}(\xi)]\|_\mathcal{C} \, d\xi \\ &\leqslant M \pi(b_m) |(i + x)^{- \mathring{\alpha}}| \leqslant (C' + 1) M \pi(a) |(i + x)^{- \mathring{\alpha}}|,\end{align*} where $M > 0$ depends on the numbers $\int_{\mathbb{R}^n} \|D_\xi^\beta \hat{u}(\xi)\|_\mathcal{C} \, d\xi$, with $\beta \leqslant \mathring{\alpha}$, but does not depend on $a$, $m$ or $x$. Therefore, another application of the Dominated Convergence Theorem finally establishes that $\langle v, \text{Op}(b_m)u \rangle$ goes to 0, as $m \rightarrow + \infty$.

Substituting $A = \text{Op}(a_m)$ on \eqref{compactsupp} we get \begin{equation*} \|\langle v, \text{Op}(a_m)u \rangle\|_\mathcal{C} \leqslant K \pi(a_m) \|v\|_2 \, \|u\|_2 \leqslant K C' \pi(a) \|v\|_2 \, \|u\|_2. \end{equation*} Taking the limit $m \rightarrow + \infty$ on both sides of this inequality gives $\|\langle v, \text{Op}(a)u \rangle\|_\mathcal{C} \leqslant K C' \pi(a) \|v\|_2 \, \|u\|_2$. Since $u, v \in \mathcal{S}^\mathcal{C}(\mathbb{R}^n)$ are arbitrary, this actually shows that there exists $C > 0$ such that $\|\text{Op}(a)u\|_2 \leqslant C \pi(a) \|u\|_2$, for all $a \in \mathcal{B}^\mathcal{C}(\mathbb{R}^{2n})$ and $u \in \mathcal{S}^\mathcal{C}(\mathbb{R}^n)$. \end{proof}

We note that not only does $\text{Op}(a)$ extends to a bounded operator on $E_n$, but this extension is also an adjointable operator on $E_n$. For the convenience of the reader, we will now give a quick proof of this fact, which is inspired by the exposition in \cite[Cap\'itulo 4]{merklentese}. This observation is important, because the representation $\text{Ad}\,U$ of the Heisenberg group $H_{2n + 1}(\mathbb{R})$ is implemented by $*$-automorphisms on the C$^*$-algebra of adjointable operators $\mathcal{L}_\mathcal{C}(E_n)$, and later it will be convenient to treat $\mathcal{B}_J^\mathcal{C}$ (see Definition \ref{rieffelalgebras}) as a $*$-subalgebra of $\mathcal{L}_\mathcal{C}(E_n)$.

\begin{proposition} Let $\mathcal{C}$ be a C$^*$-algebra (unital, or not) and let $a \in \mathcal{B}^\mathcal{C}(\mathbb{R}^{2n})$. Then $\text{Op}(a)$ is an adjointable operator on $E_n$ with $[\text{Op}(a)]^* = \text{Op}(p)$, for a certain $p \in \mathcal{B}^\mathcal{C}(\mathbb{R}^{2n})$. \end{proposition}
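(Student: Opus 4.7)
My plan is to produce a symbol $p \in \mathcal{B}^\mathcal{C}(\mathbb{R}^{2n})$ explicitly, verify the adjoint relation on the dense subspace $\mathcal{S}^\mathcal{C}(\mathbb{R}^n)$, and then use Theorem \ref{calderonvaillancourt} to extend everything to $E_n$. Concretely, for $u,v \in \mathcal{S}^\mathcal{C}(\mathbb{R}^n)$, I first compute $\langle \mathrm{Op}(a)u,v\rangle$ by writing $\mathrm{Op}(a)u$ as in \eqref{op_0}, taking the pointwise involution, and using Fubini (the integrand is absolutely integrable since $\hat u, v \in \mathcal{S}^\mathcal{C}(\mathbb{R}^n)$ and $a$ is bounded) to arrive at the triple integral
\begin{equation*}
\langle \mathrm{Op}(a)u, v\rangle \;=\; \frac{1}{(2\pi)^{n}} \int \! \int \! \int e^{-i(x-y)\cdot \xi}\, u(y)^{*}\, a(x,\xi)^{*}\, v(x)\, dy\, d\xi\, dx,
\end{equation*}
after replacing $\hat u(\xi)^{*}$ by the integral of $e^{iy\cdot\xi}u(y)^{*}$ and using \eqref{plancherel}.

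Motivated by this, I would \emph{define}
\begin{equation*}
p(y,\eta) \;:=\; \frac{1}{(2\pi)^{n}}\int\!\int e^{-i(y-x)\cdot(\eta-\xi)}\, a(x,\xi)^{*}\, dx\, d\xi,
\end{equation*}
interpreted as an oscillatory integral. Formal manipulation shows that with this choice one has $\langle \mathrm{Op}(a)u,v\rangle = \langle u,\mathrm{Op}(p)v\rangle$, since carrying out the $\eta$-integration in $\langle u,\mathrm{Op}(p)v\rangle$ produces a $\delta$-function in $x-x'$ which collapses the four-fold integral back to the expression displayed above. The key technical step is to make the oscillatory integral rigorous and simultaneously to show $p \in \mathcal{B}^\mathcal{C}(\mathbb{R}^{2n})$. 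I would do this via the standard device: insert the identities
\begin{equation*}
e^{-i(y-x)\cdot(\eta-\xi)} \;=\; \frac{(1-\Delta_{\xi})^{N}}{(1+|y-x|^{2})^{N}}\, e^{-i(y-x)\cdot(\eta-\xi)} \;=\; \frac{(1-\Delta_{x})^{M}}{(1+|\eta-\xi|^{2})^{M}}\, e^{-i(y-x)\cdot(\eta-\xi)},
\end{equation*}
integrate by parts in $\xi$ (using that $a(\,\cdot\,,\xi)^{*}$ and its derivatives are bounded in $\mathcal{C}$, so the boundary terms vanish once $\xi$-smoothing is introduced by cutting off with $\phi(\xi/m)$ and passing to the limit, as in the proof of Theorem \ref{calderonvaillancourt}) and then in $x$, choosing $N,M$ large enough that the resulting integrand is absolutely integrable in $\|\cdot\|_{\mathcal{C}}$, uniformly in $(y,\eta)$. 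Differentiation in $(y,\eta)$ under the integral sign is then justified for the same reason and produces an estimate
\begin{equation*}
\|\partial_{y}^{\alpha}\partial_{\eta}^{\beta} p(y,\eta)\|_{\mathcal{C}} \;\leqslant\; C_{\alpha,\beta,N,M}\, \sum_{|\alpha'|\leqslant|\alpha|+2M,\, |\beta'|\leqslant|\beta|+2N}\sup_{(x,\xi)\in\mathbb{R}^{2n}}\|\partial_{x}^{\alpha'}\partial_{\xi}^{\beta'} a(x,\xi)\|_{\mathcal{C}},
\end{equation*}
uniform in $(y,\eta)$, which precisely says $p \in \mathcal{B}^\mathcal{C}(\mathbb{R}^{2n})$. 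This convergence/regularity bookkeeping is the only non-routine obstacle; everything else is formal.

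Having established $p \in \mathcal{B}^\mathcal{C}(\mathbb{R}^{2n})$, the identity $\langle \mathrm{Op}(a)u,v\rangle = \langle u,\mathrm{Op}(p)v\rangle$ for all $u,v \in \mathcal{S}^\mathcal{C}(\mathbb{R}^n)$ follows from Fubini once the oscillatory integral defining $p$ is reconciled with the (already absolutely convergent) triple integral for $\langle \mathrm{Op}(a)u,v\rangle$; one inserts the same cutoff $\phi(\,\cdot\,/m)$ into the definition of $p$, carries out the rigorous Fubini at each finite $m$ where everything is absolutely integrable, and then invokes dominated convergence (justified by the estimates obtained above and by rapid decay of $u,v$) to pass to the limit $m\to\infty$. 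Finally, by Theorem \ref{calderonvaillancourt} both $\mathrm{Op}(a)$ and $\mathrm{Op}(p)$ extend to bounded operators on $E_n$; continuity of $\langle\,\cdot\,,\cdot\,\rangle_{E_{n}}$ and density of $\mathcal{S}^\mathcal{C}(\mathbb{R}^n)$ in $E_n$ then propagate the identity $\langle \mathrm{Op}(a)\xi,\eta\rangle = \langle \xi,\mathrm{Op}(p)\eta\rangle$ to all $\xi,\eta \in E_n$, showing that $\mathrm{Op}(a)$ is adjointable with $[\mathrm{Op}(a)]^{*} = \mathrm{Op}(p)$.
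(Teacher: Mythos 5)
Your proposal is correct and follows essentially the same route as the paper: your formula for $p$ is, after the change of variables $z = y - x$, $\eta' = \eta - \xi$, exactly the paper's oscillatory-integral (finite part) symbol, regularized by the same $(1-\Delta)$/Peetre-weight integration-by-parts device, with membership $p \in \mathcal{B}^\mathcal{C}(\mathbb{R}^{2n})$ obtained by differentiating under the integral sign. The verification of $\langle \mathrm{Op}(a)u,v\rangle = \langle u,\mathrm{Op}(p)v\rangle$ via compactly supported cutoffs plus dominated convergence, followed by extension to $E_n$ by boundedness (Theorem \ref{calderonvaillancourt}), continuity of the inner product and density of $\mathcal{S}^\mathcal{C}(\mathbb{R}^n)$, matches the paper's argument.
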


\begin{proof} First, assume that $a$ belongs to the space $C_c^\infty(\mathbb{R}^{2n}, \mathcal{C})$ of $\mathcal{C}$-valued compactly supported smooth functions on $\mathbb{R}^{2n}$. We are going to prove that there exists $p \in \mathcal{S}^\mathcal{C}(\mathbb{R}^{2n})$ satisfying $\langle \text{Op}(a)u, v \rangle = \langle u, \text{Op}(p)v \rangle$, for every $u, v \in \mathcal{S}^\mathcal{C}(\mathbb{R}^n)$. An application of Fubini's Theorem shows that \begin{align*} \langle \text{Op}(a) u, v \rangle &= \int_{\mathbb{R}^n} \left[ \frac{1}{(2 \pi)^n} \int_{\mathbb{R}^n} \int_{\mathbb{R}^n} e^{i (x - y) \cdot \xi} \, a(x, \xi) \, u(y) \, dy \, d\xi \right]^* v(x) \, dx \\ &= \int_{\mathbb{R}^n} u(y)^* \left[ \frac{1}{(2 \pi)^n} \int_{\mathbb{R}^n} \int_{\mathbb{R}^n} e^{i (y - x) \cdot \xi} \, a(x, \xi)^* \, v(x) \, d\xi \, dx \right] dy, \end{align*} for all $u, v \in \mathcal{S}^\mathcal{C}(\mathbb{R}^n)$. Define $c \in \mathcal{S}^\mathcal{C}(\mathbb{R}^{2n})$ by \begin{equation*} c(y, z) := \frac{1}{(2 \pi)^{n/2}} \int_{\mathbb{R}^n} e^{i z \cdot \xi} \, a(y - z, \xi)^* \, d\xi, \qquad y, z \in \mathbb{R}^n, \end{equation*} and define the function $p \in \mathcal{S}^\mathcal{C}(\mathbb{R}^{2n})$ by $p(y, z) := \mathcal{F} (\xi \longmapsto c(y, \xi))(z)$, so that $c(y, z) = \mathcal{F}^{-1} (\xi \longmapsto p(y, \xi))(z)$, for all $y, z \in \mathbb{R}^n$. Then \begin{equation*} \frac{1}{(2 \pi)^{n/2}} \int_{\mathbb{R}^n} e^{i (y - x) \cdot \xi} a(x, \xi)^* \, d\xi = c(y, y - x) = \frac{1}{(2 \pi)^{n/2}} \int_{\mathbb{R}^n} e^{i (y - x) \cdot \xi} p(y, \xi) \, d\xi,\end{equation*} so \begin{align*} \langle \text{Op}(a) u, v \rangle &= \int_{\mathbb{R}^n} u(y)^* \left[\frac{1}{(2 \pi)^n} \int_{\mathbb{R}^n} \int_{\mathbb{R}^n} e^{i (y - x) \cdot \xi} a(x, \xi)^* \, v(x) \, d\xi \, dx \right] dy \\ &= \int_{\mathbb{R}^n} u(y)^* \left[\frac{1}{(2 \pi)^n} \int_{\mathbb{R}^n} \int_{\mathbb{R}^n} e^{i (y - x) \cdot \xi} p(y, \xi) \, v(x) \, dx \, d\xi \right] dy = \langle u, \text{Op}(p) v \rangle, \end{align*} for all $u, v \in \mathcal{S}^\mathcal{C}(\mathbb{R}^n)$. Therefore, the equality $\langle \text{Op}(a)u, v \rangle = \langle u, \text{Op}(p) v \rangle$, for every $u, v \in E_n$, follows from a continuity argument. An easy calculation gives the following identity \begin{align*} p(y, \xi) &= \frac{1}{(2 \pi)^n} \int_{\mathbb{R}^n} e^{-i z \cdot \xi} \left[\int_{\mathbb{R}^n} e^{i z \cdot \eta} \, a(y - z, \eta)^* \, d\eta \right] dz \\ &= \frac{1}{(2 \pi)^n} \int_{\mathbb{R}^n} \int_{\mathbb{R}^n} e^{-i z \cdot \eta} \, a(y - z, \xi - \eta)^* \, dz \, d\eta, \qquad (y, \xi) \in \mathbb{R}^{2n}, \end{align*} which will be useful in the next step of the proof.

Suppose, now, that $a \in \mathcal{B}^\mathcal{C}(\mathbb{R}^{2n})$. Then employing the definition of oscillatory integrals in \cite[pp.~66-69]{cordes} (where they are called \textit{finite part integrals}), we define \begin{equation} \label{involution} p(y, \xi) := \frac{1}{(2 \pi)^n} \int_{\mathbb{R}^n} \int_{\mathbb{R}^n} e^{-i z \cdot \eta} \, a(y - z, \xi - \eta)^* \, dz \, d\eta \end{equation} \begin{equation*} := \frac{1}{(2 \pi)^n} \int_{\mathbb{R}^n} \int_{\mathbb{R}^n} e^{-i z \cdot \eta} \, (1 + |z|^2)^{-N} (1 - \Delta_\eta)^N \Bigl\{ (1 + |\eta|^2)^{-M} (1 - \Delta_z)^M \Bigl[ a(y - z, \xi - \eta)^* \Bigr] \Bigr\} dz \, d\eta, \end{equation*} where $M, N$ are fixed positive integers which are chosen in order to turn the right-hand side integral into an absolutely convergent one (it suffices to take $M, N > n/2$; also, the above definition is independent of $M$ and $N$). Then differentiating under the integral sign shows that $p$ belongs to $\mathcal{B}^\mathcal{C}(\mathbb{R}^{2n})$. The above definition of oscillatory integral is essentially the same as the one employed by Rieffel in his monograph \cite[Proposition 1.6, p.~6]{rieffel}.

Let $0 \leqslant \phi \leqslant 1$ be a compactly supported smooth function on $\mathbb{R}^n$ which equals 1 on a neighborhood of 0 and define, for each $a \in \mathcal{B}^\mathcal{C}(\mathbb{R}^{2n})$ and $j \in \mathbb{N} \backslash \left\{0\right\}$, the $\mathcal{C}$-valued compactly supported smooth functions on $\mathbb{R}^{2n}$ \begin{equation*} \phi_j(x) := \phi \left(\frac{x}{j}\right), \quad a_j(x, \xi) := a(x, \xi) \, \phi_j(x) \, \phi_j(\xi) \quad \text{and} \end{equation*} $p_j(y, \xi) := \frac{1}{(2 \pi)^n} \int_{\mathbb{R}^n} \int_{\mathbb{R}^n} e^{-i z \cdot \eta} \, a_j(y - z, \xi - \eta)^* \, dz \, d\eta$, for all $(y, \xi) \in \mathbb{R}^{2n}$. Then using the definition of oscillatory integrals in Equation \eqref{involution} we get, after an application of Fubini's Theorem and of the Dominated Convergence Theorem, the equality $\lim_{j \rightarrow + \infty} p_j(y, \xi) = p(y, \xi)$, for every $(y, \xi) \in \mathbb{R}^{2n}$. Hence, \begin{align*} \langle u, \text{Op}(p) v \rangle &= \int_{\mathbb{R}^n} u(y)^* \left[ \frac{1}{(2 \pi)^{n/2}} \int_{\mathbb{R}^n} e^{i y \cdot \xi} \, \lim_{j \rightarrow + \infty} p_j(y, \xi) \, \hat{v}(\xi) \, d\xi \right] dy \\ &= \lim_{j \rightarrow + \infty} \frac{1}{(2 \pi)^{n/2}} \int_{\mathbb{R}^{2n}} u(y)^* e^{i y \cdot \xi} \, p_j(y, \xi) \, \hat{v}(\xi) \, d\xi \, dy = \lim_{j \rightarrow + \infty} \langle u, \text{Op}(p_j) v \rangle,\end{align*} for all $u, v \in \mathcal{S}^\mathcal{C}(\mathbb{R}^n)$. By an analogous reasoning, \begin{equation*} \lim_{j \rightarrow + \infty} \langle \text{Op}(a_j)u, v \rangle = \lim_{j \rightarrow + \infty} \int_{\mathbb{R}^n} \left[ \frac{1}{(2 \pi)^{n/2}} \int_{\mathbb{R}^n} e^{- i x \cdot \xi} \, \hat{u}(\xi)^* a_j(x, \xi)^* \, d\xi \right] v(x) \, dx = \langle \text{Op}(a)u, v \rangle, \end{equation*} so $\langle \text{Op}(a)u, v \rangle = \lim_{j \rightarrow + \infty} \langle \text{Op}(a_j)u, v \rangle = \lim_{j \rightarrow + \infty} \langle u, \text{Op}(p_j) v \rangle = \langle u, \text{Op}(p) v \rangle$, for all $u, v \in \mathcal{S}^\mathcal{C}(\mathbb{R}^n)$. By a continuity argument, $\langle \text{Op}(a)u, v \rangle = \langle u, \text{Op}(p) v \rangle$, for all $u, v \in E_n$, so $\text{Op}(a)$ is an adjointable operator on $E_n$, for every $a \in \mathcal{B}^\mathcal{C}(\mathbb{R}^{2n})$, as claimed. \end{proof}

We finish this subsection by noting that $\left\{ \text{Op}(a): a \in \mathcal{B}^\mathcal{C}(\mathbb{R}^{2n}) \right\}$ is actually a $*$-subalgebra of $\mathcal{L}_\mathcal{C}(E_n)$. In fact, using the more suggestive notation $a^\dagger$ to denote the function $p$ defined in Equation \eqref{involution}, we see that the restriction of the involution and composition maps to $\left\{ \text{Op}(a): a \in \mathcal{B}^\mathcal{C}(\mathbb{R}^{2n}) \right\}$ are given, respectively, by $\text{Op}(a) \longmapsto \text{Op}(a^\dagger)$ and $\text{Op}(a) \circ \text{Op}(b) \longmapsto \text{Op}(a \times b)$, where \begin{equation} a^\dagger(x, \xi) := \frac{1}{(2 \pi)^n} \int_{\mathbb{R}^n} \int_{\mathbb{R}^n} e^{-i z \cdot \eta} \, a(x - z, \xi - \eta)^* \, dz \, d\eta \end{equation} \begin{equation*} := \frac{1}{(2 \pi)^n} \int_{\mathbb{R}^n} \int_{\mathbb{R}^n} e^{-i z \cdot \eta} \, (1 + |z|^2)^{-N} (1 - \Delta_\eta)^N \Bigl\{ (1 + |\eta|^2)^{-M} (1 - \Delta_z)^M \Bigl[ a(x - z, \xi - \eta)^* \Bigr] \Bigr\} dz \, d\eta, \end{equation*} and \begin{equation} (a \times b)(x, \xi) := \frac{1}{(2 \pi)^n} \int_{\mathbb{R}^n} \int_{\mathbb{R}^n} e^{-i z \cdot \eta} \, a(x, \xi - \eta) \, b(x - z, \xi) \, dz \, d\eta \end{equation} \begin{equation*} := \frac{1}{(2 \pi)^n} \int_{\mathbb{R}^n} \int_{\mathbb{R}^n} e^{-i z \cdot \eta} \, (1 + |z|^2)^{-N} (1 - \Delta_\eta)^N \Bigl\{ (1 + |\eta|^2)^{-M} (1 - \Delta_z)^M \Bigl[ a(x, \xi - \eta) \, b(x - z, \xi) \Bigr] \Bigr\} dz \, d\eta, \end{equation*} for all $x, \xi \in \mathbb{R}^n$ (for the scalar case $\mathcal{C} = \mathbb{C}$, see \cite[Proposition 4.2, p.~64]{cordes} and \cite[Theorem 4.7, p.~68]{cordes}). Just as in \eqref{involution}, it suffices to take integers $M, N > n/2$, with the above definitions also being independent of $M$ and $N$; differentiating under the integral sign shows at once that $a \times b$ belongs to $\mathcal{B}^\mathcal{C}(\mathbb{R}^{2n})$. Moreover, we see by the associativity of the composition operation that $\text{Op}((a \times b) \times c) = \text{Op}(a \times (b \times c))$, for all $a, b, c \in \mathcal{B}^\mathcal{C}(\mathbb{R}^{2n})$. We will now show that the linear map $\text{Op} \colon a \longmapsto \text{Op}(a)$ on $\mathcal{B}^\mathcal{C}(\mathbb{R}^{2n})$ is injective, from which it will follow that $\times$ is also an associative operation. Let $(e_\alpha)_{\alpha \in \Gamma}$ be an approximate identity for $\mathcal{C}$. Then for every scalar-valued function $g \in \mathcal{S}(\mathbb{R}^n)$, we have that the function $g_\alpha := g \cdot e_\alpha \colon x \longmapsto g(x) \cdot e_\alpha$ belongs to $\mathcal{S}^\mathcal{C}(\mathbb{R}^n)$, for every $\alpha \in \Gamma$. Hence, the hypothesis $\text{Op}(a) = 0$ implies \begin{equation*} 0 = \rho(\text{Op}(a)(g_\alpha)(x)) = \frac{1}{(2 \pi)^{n / 2}} \int_{\mathbb{R}^n} e^{i x \cdot \xi} \, \rho(a(x, \xi) \, e_\alpha) \, \widehat{g}(\xi) \, d\xi, \end{equation*} for all $g \in \mathcal{S}(\mathbb{R}^n)$, $x \in \mathbb{R}^n$ and every continuous linear functional $\rho$ on $\mathcal{C}$. But then injectivity of the map $b \longmapsto \text{Op}(b)$ for scalar-valued symbols $b \in \mathcal{B}(\mathbb{R}^n)$ \cite[p.~220]{melomerklen2} proves that the map $(x, \xi) \longmapsto \rho(a(x, \xi) \, e_\alpha)$ must be identically zero, for every continuous linear functional $\rho$ on $\mathcal{C}$. Therefore, by Hahn-Banach's Theorem we obtain $a(x, \xi) \, e_\alpha = 0$, for every fixed $(x, \xi) \in \mathbb{R}^{2n}$ and $\alpha \in \Gamma$, so taking the limit in $\alpha$ shows that $a$ must be identically zero, as wanted. As a corollary, since $\text{Op}((a \times b) \times c - a \times (b \times c)) = 0$, for all $a, b, c \in \mathcal{B}^\mathcal{C}(\mathbb{R}^{2n})$, it is also true that $(a \times b) \times c = a \times (b \times c)$, for all $a, b, c \in \mathcal{B}^\mathcal{C}(\mathbb{R}^{2n})$.

We can also use a similar argument to obtain the associativity of Rieffel's deformed product, which we show next. In fact, as noted in the remark following Equation \eqref{op}, if $J$ is a fixed skew-symmetric linear transformation on $\mathbb{R}^n$, then every operator $L_f$, for $f \in \mathcal{B}^\mathcal{C}(\mathbb{R}^n)$, may be written as $\text{Op}(\tilde{f})$, where $\tilde{f}(x, \xi) := f(x - J\xi / (2 \pi))$, for all $x, \xi \in \mathbb{R}^n$ (note that the map $\tilde{\cdot} \colon f \longmapsto \tilde{f}$ \textit{depends} on the fixed $J$). Therefore, \begin{align*} (\tilde{f} \times \tilde{g})(x, \xi) & := \frac{1}{(2 \pi)^n} \int_{\mathbb{R}^n} \int_{\mathbb{R}^n} e^{-i z \cdot \eta} \, (1 + |z|^2)^{-N} (1 - \Delta_\eta)^N \\ & \hspace{1.7cm} \Bigl\{ (1 + |\eta|^2)^{-M} (1 - \Delta_z)^M \Bigl[ f(x - J(\xi - \eta) / (2 \pi)) \, g(x - z - J\xi / (2 \pi)) \Bigr] \Bigr\} dz \, d\eta, \end{align*} which shows that (i) $(\tilde{f} \times \tilde{g})(x, \xi) = (f \times_J g)(x - J\xi / (2 \pi)) = \widetilde{(f \times_J g)}(x, \xi)$ and, in particular, (ii) $(\tilde{f} \times \tilde{g})(x, 0) = (f \times_J g)(x) = \widetilde{(f \times_J g)}(x, 0)$, for all $f, g \in \mathcal{B}^\mathcal{C}(\mathbb{R}^n)$, $x, \xi \in \mathbb{R}^n$. Note that for all $f, g \in \mathcal{B}^\mathcal{C}(\mathbb{R}^n)$ and $\phi \in \mathcal{S}^\mathcal{C}(\mathbb{R}^n)$ we have the equality \begin{align*} (f \times_J g) \times_J \phi & = L_{f \times_J g}(\phi) = \text{Op}(\widetilde{f \times_J g})(\phi) = \text{Op}(\tilde{f} \times \tilde{g})(\phi) \\ & = [\text{Op}(\tilde{f}) \circ \text{Op}(\tilde{g})](\phi) = [L_f \circ L_g](\phi) = f \times_J (g \times_J \phi) \end{align*} (since $J$ is arbitrary, this incidentally gives an alternative proof for the relation \eqref{interplay}). Hence, since for all $f, g, h \in \mathcal{B}^\mathcal{C}(\mathbb{R}^n)$ and $\phi \in \mathcal{S}^\mathcal{C}(\mathbb{R}^n)$ we have $f \times_J g \in \mathcal{B}^\mathcal{C}(\mathbb{R}^n)$ and $h \times_J \phi \in \mathcal{S}^\mathcal{C}(\mathbb{R}^n)$ (the former relation follows from (ii), while the latter is a consequence of the equality $\text{Op}(\tilde{h})(\phi) = h \times_J \phi$), we obtain \begin{align*} L_{(f \times_J g) \times_J h}(\phi) & = [(f \times_J g) \times_J h] \times_J \phi = (f \times_J g) \times_J (h \times_J \phi) = f \times_J [g \times_J (h \times_J \phi)] \\ & = f \times_J [(g \times_J h) \times_J \phi] = [f \times_J (g \times_J h)] \times_J \phi = L_{f \times_J (g \times_J h)}(\phi). \end{align*} Since $J$ and $\phi$ are arbitrary and the map $\text{Op}$ is injective, we have obtained the desired result. We note that, although the formulas in these final two paragraphs will not be used as tools to derive any of our main results, they have been included here to provide a more transparent link between Rieffel's deformed product and the composition of pseudodifferential operators.

\subsection*{The algebra \texorpdfstring{$\mathcal{B}_J^\mathcal{C}(\mathbb{R}^n)$}{BJC(Rn)}}$\ $

$\ $

For every pseudodifferential operator $\text{Op}(a)$, $a \in \mathcal{B}^\mathcal{C}(\mathbb{R}^{2n})$, and every $\alpha, \beta \in \mathbb{N}^n$ one has \begin{equation} \label{derivsymb} \partial_\texttt{a}^\alpha \partial_\texttt{b}^\beta[(\text{Ad}\,U)(\texttt{a}, \texttt{b})(\text{Op}(a))] = (-1)^{|\alpha| + |\beta|}(\text{Ad}\,U)(\texttt{a}, \texttt{b})(\text{Op}(\partial_x^\alpha \partial_\xi^\beta a)), \qquad \texttt{a}, \texttt{b} \in \mathbb{R}^n.\end{equation} Indeed, by Theorem \ref{calderonvaillancourt} we have the estimate \begin{equation} \label{calderon} \|\text{Op}(a)\| \leqslant C \max_{|\gamma|, |\delta| \leqslant n} \sup \left\{\|\partial_x^\gamma \partial_\xi^\delta a(x, \xi)\|_\mathcal{C}: x, \xi \in \mathbb{R}^n\right\},\end{equation} where $C > 0$ is independent of $a$ (see also \cite[Corollary 4.7, p.~34]{rieffel} for a particular version of these inequalities adapted for the operators $L_f$, $f \in \mathcal{B}_J^\mathcal{C}(\mathbb{R}^n)$). Therefore, denoting by $e_j$ the $j^\text{th}$ element of the canonical basis of $\mathbb{R}^n$, the equalities \begin{equation*} a((x, \xi) + h (e_k, 0)) - a(x, \xi) - h \, \frac{\partial a}{\partial x_k} (x, \xi) = h^2 \int_0^1 t \int_0^1 \frac{\partial^2 a}{\partial x_k^2} ((x, \xi) + t s h (e_k, 0)) ds \, dt,\end{equation*} and \begin{equation*} a((x, \xi) + h (0, e_k)) - a(x, \xi) - h \, \frac{\partial a}{\partial \xi_k} (x, \xi) = h^2 \int_0^1 t \int_0^1 \frac{\partial^2 a}{\partial \xi_k^2} ((x, \xi) + t s h (0, e_k)) ds \, dt,\end{equation*} $h \in \mathbb{R}$, $1 \leqslant k \leqslant n$, combined with the estimate \eqref{calderon}, give \eqref{derivsymb} in the case $|\alpha| + |\beta| = 1$. The equality for general $\alpha, \beta \in \mathbb{N}^n$ follows from an iteration of this procedure. It shows that the operator $\text{Op}(a)$ belongs to the $*$-algebra $C^\infty(\text{Ad}\,U)$ of smooth elements for the representation $\text{Ad}\,U$, for every $a \in \mathcal{B}^\mathcal{C}(\mathbb{R}^{2n})$. In particular, every element of $\mathcal{B}_J^\mathcal{C}$ is contained in $C^\infty(\text{Ad}\,U)$, so we may equip $\mathcal{B}_J^\mathcal{C}$ with the subspace topology induced by the usual Fr\'echet topology of $C^\infty(\text{Ad}\,U)$, which will be denoted by $\tau_{\mathcal{B}_J^\mathcal{C}, \text{C}^\infty}$. Also, injectivity of the map $L \colon f \longrightarrow L_f$ allows us to equip $\mathcal{B}_J^\mathcal{C}$ with a Fr\'echet space topology $\tau_\mathcal{B}$ induced by the natural topology of the function algebra $\mathcal{B}_J^\mathcal{C}(\mathbb{R}^n)$ defined by the family \eqref{normsb} of $*$-norms. Then \eqref{calderon} combined with \eqref{derivsymb} shows that \begin{equation} \rho_m(\text{Op}(a)) \leqslant C \max_{|\gamma|, |\delta| \leqslant n + m} \sup \left\{\|\partial_x^\gamma \partial_\xi^\delta a(x, \xi)\|_\mathcal{C}: x, \xi \in \mathbb{R}^n\right\}, \qquad a \in \mathcal{B}^\mathcal{C}(\mathbb{R}^{2n}), \, m \in \mathbb{N},\end{equation} for the same constant $C$ above (for the definition of $\rho_m$, see \eqref{smoothfrechet}). When specialized to the operators $L_f$ (and to functions $f$ defined on $\mathbb{R}^n$, instead of $\mathbb{R}^{2n}$) this gives \begin{equation} \label{finer} \rho_m(L_f) \leqslant \tilde{C}_m \max_{|\gamma| \leqslant n + m} \sup \left\{\|\partial_x^\gamma f(x)\|_\mathcal{C}: x \in \mathbb{R}^n\right\}, \qquad f \in \mathcal{B}_J^\mathcal{C}(\mathbb{R}^n), \, m \in \mathbb{N},\end{equation} which implies that $\tau_\mathcal{B}$ is finer than $\tau_{\mathcal{B}_J^\mathcal{C}, \text{C}^\infty}$ (note that $\tilde{C}_m$ depends also on the linear transformation $J$).

It is not clear that $\mathcal{B}_J^\mathcal{C}$, when equipped with $\tau_{\mathcal{B}_J^\mathcal{C}, \text{C}^\infty}$, is a closed subspace of $C^\infty(\text{Ad}\,U)$. To see that this is indeed the case, we are going to resort to the ``symbol map'' $S$ constructed in reference \cite{melomerklen2}. Consequently, we will need to temporarily assume that $\mathcal{C}$ is a \textit{unital} C$^*$-algebra. We make the important observation that the results of \cite[Section 2]{melomerklen2} which will be invoked, in what follows, are valid for \textit{any} unital C$^*$-algebra, and do not require the separability assumption made in that reference. For a more explicit discussion on this issue, we refer the reader to Appendix \ref{appendixc}, where in particular we show that $E_{2n}$ can be identified with an \textit{interior tensor product} $E_n \otimes E_n$ (see Lemma \ref{toscano-merklen1}); this is used in the definition of the map $S$ described next.

Consider the surjective map \cite[Theorem 1]{melomerklen2} \begin{equation*} S \colon C^\infty(\text{Ad}\,U) \longrightarrow \mathcal{B}^\mathcal{C}(\mathbb{R}^{2n}) \end{equation*} given by \begin{equation} \label{S} S(A)(x, \xi) := (2 \pi)^{n/2} \langle u \cdot 1_\mathcal{C}, \left\{ (D \, [(\text{Ad}\,U)(-x, -\xi)(A)] \, \mathcal{F}^{-1}) \otimes I_{E_n} \right\} \, v \cdot 1_\mathcal{C} \rangle_{E_{2n}}, \end{equation} for all $A \in C^\infty(\text{Ad}\,U)$ and $(x, \xi) \in \mathbb{R}^{2n}$, where $D := \prod_{j = 1}^n (1 + \partial_{x_j})^2 (1 + \partial_{\xi_j})^2$ and $u$ and $v$ are (fixed) suitable scalar-valued functions belonging to $L^2(\mathbb{R}^{2n}) \cap L^1(\mathbb{R}^{2n})$, which are independent of $A$ (for the definitions of $u$ and $v$, see the statement of Lemma \ref{toscano-merklen2}; for a description of the embedding $L^2(\mathbb{R}^n) \hookrightarrow E_n$, see Appendix \ref{appendixd}). Then the composition $S \circ Op$ is the identity operator on $\mathcal{B}^\mathcal{C}(\mathbb{R}^{2n})$ so that, in particular, $S(L_f) = \tilde{f}$, where $\tilde{f}(x, \xi) := f(x - J \xi/(2 \pi))$. Applying the Cauchy-Schwarz inequality for Hilbert C$^*$-modules to \eqref{S} yields an estimate in the opposite direction of the one given by the Calder\'on-Vaillancourt-type inequality \eqref{calderon}, namely, for all $A \in C^\infty(\text{Ad}\,U)$ and $a = S(A)$, \begin{equation} \label{invcalderon} \sup \left\{\|a(x, \xi)\|_\mathcal{C}: x, \xi \in \mathbb{R}^n\right\} \leqslant (2 \pi)^{n/2} \|u\|_2 \|v\|_2 \|D \, [(\text{Ad}\,U)(-x, -\xi)(A)]|_{x = \xi = 0}\|, \end{equation} where we have used that $\|\mathcal{F}^{-1}\| = 1$ and that $\text{Ad}\,U$ is a representation by $*$-automorphisms on $\mathcal{L}_\mathcal{C}(E_n)$. So just as the estimate \eqref{calderon} gives a bound for the operator norm of $\text{Op}(a)$ in terms of sup-norms of derivatives of $a$, the estimate \eqref{invcalderon} provides a bound for the sup-norm of $a = S(A)$ in terms of operator norms of derivatives of $A$. Using \eqref{derivsymb} after substituting $A = \text{Op}(b)$ on \eqref{invcalderon}, where $b := \partial_x^\gamma \partial_\xi^\delta a$, $\gamma, \delta \in \mathbb{N}^n$, $a \in \mathcal{B}^\mathcal{C}(\mathbb{R}^{2n})$, gives \begin{equation} \label{invcalderon-2} \sup \left\{\|\partial_x^\gamma \partial_\xi^\delta a(x, \xi)\|_\mathcal{C}: x, \xi \in \mathbb{R}^n\right\} \leqslant (2 \pi)^{n/2} \|u\|_2 \|v\|_2 \|(\partial_x^\gamma \partial_\xi^\delta D)\, [(\text{Ad}\,U)(-x, -\xi)(\text{Op}(a))]|_{x = \xi = 0}\|. \end{equation} Noting that $L_f = \text{Op}(\tilde{f})$, \eqref{invcalderon-2} with $\delta = 0$ immediately implies \begin{align} \label{coarser} \max_{|\gamma| \leqslant m} \sup \left\{\|\partial_x^\gamma f(x)\|_\mathcal{C}: x \in \mathbb{R}^n\right\} & = \max_{|\gamma| \leqslant m} \sup \left\{\|\partial_x^\gamma \tilde{f}(x, 0)\|_\mathcal{C}: x \in \mathbb{R}^n\right\} \\ & \leqslant \max_{|\gamma| \leqslant m} \sup \left\{\|\partial_x^\gamma \tilde{f}(x, \xi)\|_\mathcal{C}: x, \xi \in \mathbb{R}^n\right\} \stackrel{\eqref{invcalderon-2}}{\leqslant} \tilde{E} \, \rho_{2n + m}(L_f), \nonumber \end{align} for all $f \in \mathcal{B}_J^\mathcal{C}(\mathbb{R}^n)$, $m \in \mathbb{N}$ and some constant $\tilde{E} > 0$ which does not depend on $f$ or $J$ (indeed, we may choose $\tilde{E} = 16^n (2 \pi)^{n/2} \|u\|_2 \|v\|_2$), showing that $\tau_\mathcal{B}$ is coarser than $\tau_{\mathcal{B}_J^\mathcal{C}, \text{C}^\infty}$. Therefore, we conclude that $\tau_\mathcal{B} = \tau_{\mathcal{B}_J^\mathcal{C}, \text{C}^\infty}$.

\begin{remark} Note that there is a certain ``uniformity'' aspect in the estimate \eqref{coarser}: the constant $\tilde{E}$ that shows up does not depend on the seminorms under consideration. \end{remark}

Having proved the equality of the topologies $\tau_\mathcal{B}$ and $\tau_{\mathcal{B}_J^\mathcal{C}, \text{C}^\infty}$, we are in a good position to define an appropriate differential norm on $\mathcal{B}_J^\mathcal{C}(\mathbb{R}^n)$ (see Definition \ref{diffnorm}):

\begin{theorem} \label{b-diffnorm} Let $\mathcal{C}$ be a unital C$^*$-algebra and $J$ be a skew-symmetric linear transformation on $\mathbb{R}^n$. The topologies of the pseudodifferential operator algebra $\mathcal{B}_J^\mathcal{C}$ and the function algebra $\mathcal{B}_J^\mathcal{C}(\mathbb{R}^n)$ are generated by differential norms. In particular, they are Arens-Michael $*$-algebras. \end{theorem}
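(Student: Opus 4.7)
The plan is to construct an explicit differential norm on $\mathcal{B}_J^\mathcal{C}$ using the infinitesimal generators $\delta_1, \ldots, \delta_{2n}$ of the Heisenberg-group representation $\mathrm{Ad}\,U$, and then transfer it along the injective $*$-homomorphism $L \colon f \longmapsto L_f$ to obtain one on $\mathcal{B}_J^\mathcal{C}(\mathbb{R}^n)$. The already-established equality $\tau_\mathcal{B} = \tau_{\mathcal{B}_J^\mathcal{C}, \mathrm{C}^\infty}$ guarantees that the topology so produced is the natural Fr\'echet topology on both algebras.

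The first observation is structural: although $\mathrm{Ad}\,U$ is a representation of $H_{2n+1}(\mathbb{R})$, it does not depend on the central variable $c$, so it factors through the abelian quotient $\mathbb{R}^{2n}$. Consequently the $\delta_j$ commute pairwise on $C^\infty(\mathrm{Ad}\,U)$, and since $\mathrm{Ad}\,U$ acts by $*$-automorphisms on the C$^*$-algebra $\mathcal{L}_\mathcal{C}(E_n)$, each $\delta_j$ is a $*$-derivation, i.e.\ $\delta_j(A^*) = (\delta_j A)^*$. Writing $\delta^\alpha := \delta_1^{\alpha_1} \cdots \delta_{2n}^{\alpha_{2n}}$ for $\alpha \in \mathbb{N}^{2n}$, I would then define, for $A \in \mathcal{B}_J^\mathcal{C}$,
\begin{equation*}
T_k(A) \, := \sum_{|\alpha| = k} \frac{1}{\alpha!} \, \|\delta^\alpha A\|, \qquad k \in \mathbb{N}.
\end{equation*}
The factorial weights are what make the Leibniz axiom of Definition \ref{diffnorm} come out cleanly.

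The three axioms are then verified as follows. Since $\|B^*\| = \|B\|$ in $\mathcal{L}_\mathcal{C}(E_n)$ and each $\delta_j$ is a $*$-derivation, each $T_k$ is a $*$-seminorm; moreover $T_0(A) = \|A\|$ is exactly the C$^*$-norm (so axiom (2) holds with $c = 1$) and this immediately forces $T$ to be a norm, not just a seminorm. For axiom (3), the commuting multivariable Leibniz rule
\begin{equation*}
\delta^\alpha(AB) \, = \, \sum_{\beta + \gamma = \alpha} \binom{\alpha}{\beta} \, \delta^\beta A \cdot \delta^\gamma B
\end{equation*}
yields, after dividing by $\alpha!$ and summing over $|\alpha| = k$, the bound $T_k(AB) \leqslant \sum_{i + j = k} T_i(A) \, T_j(B)$, exactly as required. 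It remains to confirm that the topology generated by $(T_k)_{k \in \mathbb{N}}$ coincides with $\tau_{\mathcal{B}_J^\mathcal{C}, \mathrm{C}^\infty}$, which is trivial: on the one hand, $T_k(A) \leqslant C_k \, \rho_k(A)$ with $C_k$ a purely combinatorial constant; on the other hand, every single summand $\|\delta^\alpha A\|$ appearing in $\rho_m$ is dominated by $\alpha! \, T_{|\alpha|}(A)$, so $\rho_m(A) \leqslant C_m' \sum_{k = 0}^m T_k(A)$. This gives a differential norm generating the topology of $\mathcal{B}_J^\mathcal{C}$, and pulling back via the injection $L$ produces a differential norm on $\mathcal{B}_J^\mathcal{C}(\mathbb{R}^n)$ generating its natural Fr\'echet topology. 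The Arens-Michael conclusion then follows at once by forming the sequence of submultiplicative $*$-norms $s_m := \sum_{k = 0}^m T_k$ as in \eqref{submdiff}.

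There is no single hard step; the main subtlety is recognizing that the commutativity of the $\delta_j$, afforded by the independence of $\mathrm{Ad}\,U$ on the central variable, is precisely what permits the combinatorial identity matching the Leibniz axiom of Definition \ref{diffnorm}. Everything else is bookkeeping with the $1/\alpha!$ weighting, and the fact that $T_0$ coincides with the C$^*$-norm $\|\cdot\|_{\mathcal{B}_J^\mathcal{C}}$ removes any need to separately verify Hausdorffness.
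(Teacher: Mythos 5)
Your proposal is correct and follows essentially the same route as the paper: the paper also defines $T_k(L_f)$ as a factorially weighted sum of the norms $\|\delta^\alpha(L_f)\|$ over monomials in the generators of $\mathrm{Ad}\,U$ (with weight $1/k!$ rather than your $1/\alpha!$, either of which satisfies the Leibniz axiom), invokes the previously established equality $\tau_\mathcal{B} = \tau_{\mathcal{B}_J^\mathcal{C}, \mathrm{C}^\infty}$, and pulls the differential norm back along the $*$-isomorphism $L$ to $\mathcal{B}_J^\mathcal{C}(\mathbb{R}^n)$, with the Arens--Michael conclusion coming from the seminorms $s_m$ of \eqref{submdiff}. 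Your only variation, the $1/\alpha!$ weighting together with the explicit commutativity remark for the $\delta_j$, simply makes the verification of axiom (3) an exact combinatorial identity rather than an estimate, and is entirely compatible with the paper's argument.
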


\begin{proof} It is clear that defining \begin{equation} \label{diffnorms-b} T_0(L_f) := \|L_f\|, \quad T_k(L_f) := \frac{1}{k!} \, \sum_{|\alpha| = k} \|\delta^\alpha(L_f)\|, \quad L_f \in \mathcal{B}_J^\mathcal{C}, \, k \geqslant 1, \, \alpha \in \mathbb{N}^{2n}, \end{equation} where the $\delta^\alpha$'s are the monomials in the generators of the representation $\text{Ad}\,U$, yields a differential norm $T \colon L_f \longmapsto (T_k(L_f))_{k \in \mathbb{N}}$ on $\mathcal{B}_J^\mathcal{C}$ which generates the Fr\'echet topology $\tau_{\mathcal{B}_J^\mathcal{C}, \text{C}^\infty} = \tau_\mathcal{B}$. Moreover, the family $(s_m)_{m \in \mathbb{N}}$ of submultiplicative $*$-norms defined in Equation \eqref{submdiff} generates this same topology. Since the above differential norm on $\mathcal{B}_J^\mathcal{C}$ may be pulled back to a differential norm on $\mathcal{B}_J^\mathcal{C}(\mathbb{R}^n)$ by the $*$-isomorphism $L \colon \mathcal{B}_J^\mathcal{C}(\mathbb{R}^n) \longrightarrow \mathcal{B}_J^\mathcal{C}$, all of the conclusions just stated for $\mathcal{B}_J^\mathcal{C}$ are also true for $\mathcal{B}_J^\mathcal{C}(\mathbb{R}^n)$. \end{proof}

\begin{remark} Assume $\mathcal{C}$ is a non-unital C$^*$-algebra. Since there exists a canonical inclusion $\mathcal{B}_J^\mathcal{C}(\mathbb{R}^n) \hookrightarrow \mathcal{B}_J^{\tilde{\mathcal{C}}}(\mathbb{R}^n)$, we can equip $\mathcal{B}_J^\mathcal{C}(\mathbb{R}^n)$ with the subspace topology induced by $\tau_\mathcal{B} = \tau_{\mathcal{B}_J^{\tilde{\mathcal{C}}}, \text{C}^\infty}$. But the subspace topology defined by the norms in \eqref{normsb} is complete, so $\mathcal{B}_J^\mathcal{C}(\mathbb{R}^n)$ is also an Arens-Michael $*$-algebra in this case. \end{remark}

Before proving Theorem \ref{b-uniq}, we recall the concept of closure under the holomorphic functional calculus:

\begin{definition} \label{holo} \cite[p.~582]{schweitzer} Let $\mathcal{B}$ be a $*$-subalgebra of a C$^*$-algebra $\mathcal{A}$. $\mathcal{B}$ is said to be \textit{closed under the holomorphic functional calculus of $\mathcal{A}$} if, for every element $b$ of $\dot{\mathcal{B}}$ and every holomorphic function $f$ on an open neighborhood $V \subseteq \mathbb{C}$ of $\sigma_{\dot{\mathcal{A}}}(b)$, one has $f(b) \in \dot{\mathcal{B}}$. \end{definition}

The next theorem shows some advantages of dealing with a topology which is generated by a differential (semi)norm:

\begin{theorem} \label{b-uniq} Let $\mathcal{C}$ be a unital C$^*$-algebra and $J$ be a skew-symmetric linear transformation on $\mathbb{R}^n$. Then the algebras $\mathcal{B}_J^\mathcal{C}$ and $\mathcal{B}_J^\mathcal{C}(\mathbb{R}^n)$ are spectrally invariant and closed under the C$^\infty$ and holomorphic functional calculi of their respective C$^*$-completions (which are clearly $*$-isomorphic). They also share the same K-theory of their C$^*$-completions -- more specifically, the inclusion maps induce K-theory isomorphisms. Finally, there exists only one C$^*$-norm on each one of the algebras $\mathcal{B}_J^\mathcal{C}$ and $\mathcal{B}_J^\mathcal{C}(\mathbb{R}^n)$. \end{theorem}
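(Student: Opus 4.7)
The plan is to reduce every assertion to the fact, established in Theorem \ref{b-diffnorm}, that $\mathcal{B}_J^\mathcal{C}$ carries a differential norm $T=(T_k)_{k\in\mathbb{N}}$ with $T_0=\|\,\cdot\,\|$, so that $\mathcal{B}_J^\mathcal{C}$ is a Fr\'echet $*$-subalgebra of its C$^*$-completion (call it $\overline{\mathcal{B}_J^\mathcal{C}}\subseteq\mathcal{L}_\mathcal{C}(E_n)$) to which the machinery of \cite[Theorems 3.3 \& 3.4]{bhattdiff} applies directly. Since the map $L\colon \mathcal{B}_J^\mathcal{C}(\mathbb{R}^n)\longrightarrow\mathcal{B}_J^\mathcal{C}$ is a $*$-isomorphism that extends to a $*$-isomorphism of the C$^*$-completions, every property established for $\mathcal{B}_J^\mathcal{C}$ transfers verbatim to $\mathcal{B}_J^\mathcal{C}(\mathbb{R}^n)$, and it therefore suffices to work with the operator algebra throughout.

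First I would invoke \cite[Theorem 3.3]{bhattdiff} to conclude that $\mathcal{B}_J^\mathcal{C}$ is spectrally invariant in $\overline{\mathcal{B}_J^\mathcal{C}}$; equivalently, since the embedding is continuous and the subalgebra is dense, one also obtains closure under the holomorphic functional calculus of $\overline{\mathcal{B}_J^\mathcal{C}}$ (see the discussion preceding the definition of smooth algebra, where this equivalence is attributed to \cite[Lemma 1.2]{schweitzer}). Next, I would apply \cite[Theorem 3.4]{bhattdiff} to get closure under the C$^\infty$-functional calculus of $\overline{\mathcal{B}_J^\mathcal{C}}$. These two assertions are the content of the first sentence of the theorem.

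With spectral invariance in hand, the statement that the inclusion $\mathcal{B}_J^\mathcal{C}\hookrightarrow\overline{\mathcal{B}_J^\mathcal{C}}$ induces a $K$-theory isomorphism follows from the standard density theorem for spectrally invariant, continuously embedded dense Fr\'echet $*$-subalgebras of C$^*$-algebras (for the Fr\'echet case this is due to Schweitzer; see \cite{schweitzer}). The same conclusion holds for $\mathcal{B}_J^\mathcal{C}(\mathbb{R}^n)$ via the isomorphism $L$.

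Finally, to establish the uniqueness of the C$^*$-norm, I would simply appeal to Theorem \ref{uniq2}: its two hypotheses -- density of the $*$-subalgebra (built into the definition of the C$^*$-completion), closure under the C$^\infty$-functional calculus, and spectral invariance -- have all just been verified, so any C$^*$-norm on $\mathcal{B}_J^\mathcal{C}$ must coincide with the restriction of $\|\,\cdot\,\|_{\mathcal{L}_\mathcal{C}(E_n)}$, i.e.\ with $\|\,\cdot\,\|_{\mathcal{B}_J^\mathcal{C}}$; pulling back via $L$ gives the analogous statement for $\mathcal{B}_J^\mathcal{C}(\mathbb{R}^n)$. There is essentially no obstacle here beyond the bookkeeping of citing \cite{bhattdiff} correctly; the substantive work was the construction of the differential norm in Theorem \ref{b-diffnorm}, which in turn rested on the Calder\'on-Vaillancourt estimate and its inverse via the symbol map $S$.
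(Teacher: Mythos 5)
Your proposal is correct and follows essentially the same route as the paper: the differential norm from Theorem \ref{b-diffnorm} feeds into \cite[Theorems 3.3 \& 3.4]{bhattdiff} for spectral invariance and C$^\infty$-closure, \cite[Lemma 1.2]{schweitzer} gives the holomorphic-calculus equivalence, and Theorem \ref{uniq2} yields uniqueness of the C$^*$-norm, with the $*$-isomorphism $L$ transferring everything between the function and operator algebras. The only divergence is in the K-theory step, where you cite a ``standard density theorem'' attributed to Schweitzer (the paper's reference \cite{schweitzer} is not that theorem), whereas the paper verifies explicitly that $\mathcal{B}_J^\mathcal{C}(\mathbb{R}^n)$ is a \emph{bonne alg\`ebre de Fr\'echet} -- openness of the invertibles from spectral invariance plus continuity of the inclusion, continuity of inversion from Waelbroeck -- and then applies Bost's Th\'eor\`eme A.2.1; these verifications are exactly what makes the density theorem applicable, so you should spell them out (or cite \cite{bost}) rather than treat the step as pure bookkeeping.
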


\begin{proof} Spectral invariance follows from \cite[Theorem 3.3 (iii)]{bhattdiff} with $\mathfrak{A}_\tau = \mathcal{B}_J^\mathcal{C}(\mathbb{R}^n)$ and $\phi$ being the identity map, noting that the locally convex topology of $\mathcal{B}_J^\mathcal{C}(\mathbb{R}^n)$ is already complete, while closure under the C$^\infty$-functional calculus follows from \cite[Theorem 3.4]{bhattdiff}. Therefore, as a consequence of Theorem \ref{uniq}, the Fr\'echet $*$-algebra $\mathcal{B}_J^\mathcal{C}(\mathbb{R}^n)$ can be equipped with only one C$^*$-norm, namely, $\|\, \cdot \,\|_{\mathcal{B}_J^\mathcal{C}}$, while the only C$^*$-norm on $\mathcal{B}_J^\mathcal{C}$ is the operator norm $\|\, \cdot \,\|$ of $\mathcal{L}_\mathcal{C}(E_n)$ (see Definition \ref{rieffelalgebras}). By \cite[Lemma 1.2]{schweitzer}, spectral invariance of these algebras in their completions is equivalent to being closed under the holomorphic functional calculus. To prove the statement about the isomorphism of K-theories of $\mathcal{B} := \mathcal{B}_J^\mathcal{C}(\mathbb{R}^n)$ and $\mathcal{A} := \overline{\mathcal{B}_J^\mathcal{C}(\mathbb{R}^n)}$, first note that the group $\text{Inv}(\mathcal{B})$ of invertible elements of $\mathcal{B}$ coincides with $\text{Inv}(\mathcal{A}) \cap \mathcal{B}$, as a result of the spectral invariance claim. Moreover, since the Fr\'echet topology $\tau_{\mathcal{B}_J^{\tilde{\mathcal{C}}}, \text{C}^\infty}$ on $\mathcal{B}$ is finer than the one induced by the C$^*$-topology of $\mathcal{A}$, the inclusion map $i_\mathcal{B} \colon \mathcal{B} \hookrightarrow \mathcal{A}$ is continuous. Therefore, since $\text{Inv}(\mathcal{B})$ is the inverse image of the open set $\text{Inv}(\mathcal{A})$ of $\mathcal{A}$ under $i_\mathcal{B}$, it is open in $\mathcal{B}$. Consequently, it follows from \cite[Proposition 2, p.~113]{waelbroeck} that the inversion map on $\text{Inv}(\mathcal{B})$ is continuous with respect to the (induced) Fr\'echet topology of $\mathcal{B}$. These arguments show that $\mathcal{B}$ is a Fr\'echet algebra with a continuous inversion map on the open set $\text{Inv}(\mathcal{B})$ (thus, a ``\textit{bonne alg\`ebre de Fr\'echet}'', according to \cite[A.1.2, p.~324]{bost}), so the existence of the K-theory isomorphism follows from \cite[Th\'eor\`eme A.2.1, p.~328]{bost}. The conclusion for $\mathcal{B}_J^\mathcal{C}$ is obtained in the same way.\end{proof}

We now extend the uniqueness result regarding C$^*$-norms on $\mathcal{B}_J^\mathcal{C}(\mathbb{R}^n)$ to any C$^*$-algebra $\mathcal{C}$ (unital, or not).

\begin{theorem} \label{b-uniq2} Let $\mathcal{C}$ be a C$^*$-algebra (unital, or not) and $J$ be a skew-symmetric linear transformation on $\mathbb{R}^n$. Then the algebras $\mathcal{B}_J^\mathcal{C}$ and $\mathcal{B}_J^\mathcal{C}(\mathbb{R}^n)$ admit only one C$^*$-norm. \end{theorem}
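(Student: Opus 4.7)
The strategy for proving Theorem \ref{b-uniq2} is to reduce to the unital-coefficient case of Theorem \ref{b-uniq} by passing to the unitization $\widetilde{\mathcal{C}}$ of $\mathcal{C}$. Writing $\mathcal{B} := \mathcal{B}_J^{\mathcal{C}}(\mathbb{R}^n)$ and $\mathcal{A} := \overline{\mathcal{B}}$ (the C$^*$-completion), the plan is to verify the hypotheses of Theorem \ref{uniq2} for the inclusion $\mathcal{B} \subseteq \mathcal{A}$: that $\mathcal{B}$ is spectrally invariant in $\mathcal{A}$ and closed under the $C^\infty$-functional calculus of $\mathcal{A}$. The corresponding properties for the unital-coefficient algebra $\mathcal{B}_J^{\widetilde{\mathcal{C}}}(\mathbb{R}^n)$ are already available from Theorem \ref{b-uniq}. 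Once the two hypotheses for $\mathcal{B}$ are verified, Theorem \ref{uniq2} yields uniqueness for $\mathcal{B}$, and transport along the $*$-isomorphism $L$ immediately gives the conclusion for $\mathcal{B}_J^{\mathcal{C}}$ as well.

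Set-up. Post-composition with the canonical inclusion $\mathcal{C} \hookrightarrow \widetilde{\mathcal{C}}$ realizes $\mathcal{B}$ as a $*$-ideal of $\mathcal{B}_J^{\widetilde{\mathcal{C}}}(\mathbb{R}^n)$; concretely, $\mathcal{B}$ is exactly the kernel of the $*$-homomorphism $\pi_* \colon \mathcal{B}_J^{\widetilde{\mathcal{C}}}(\mathbb{R}^n) \longrightarrow \mathcal{B}_J^{\mathbb{C}}(\mathbb{R}^n)$ induced by the canonical projection $\pi \colon \widetilde{\mathcal{C}} \twoheadrightarrow \widetilde{\mathcal{C}}/\mathcal{C} \simeq \mathbb{C}$. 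Using the direct-sum decomposition $\widetilde{\mathcal{C}} = \mathcal{C} \oplus \mathbb{C}$, the unitization $\dot{\mathcal{B}}$ is identified with the $*$-subalgebra of $\mathcal{B}_J^{\widetilde{\mathcal{C}}}(\mathbb{R}^n)$ consisting of those $h$ whose scalar component $\pi \circ h$ is a constant function. Since Theorem \ref{b-uniq} together with Proposition \ref{envalg} imply that $\pi_*$ is bounded for the operator C$^*$-norm, it extends continuously to $\overline{\pi_*} \colon \overline{\mathcal{B}_J^{\widetilde{\mathcal{C}}}(\mathbb{R}^n)} \longrightarrow \overline{\mathcal{B}_J^{\mathbb{C}}(\mathbb{R}^n)}$. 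Now $\overline{\pi_*}$ vanishes on $\mathcal{A}$ but sends $1_{\widetilde{\mathcal{C}}}$ to the nonzero constant function $1$, hence $1_{\widetilde{\mathcal{C}}} \notin \mathcal{A}$; this guarantees that the natural map $\dot{\mathcal{A}} \hookrightarrow \overline{\mathcal{B}_J^{\widetilde{\mathcal{C}}}(\mathbb{R}^n)}$, $(a, \mu) \longmapsto a + \mu \cdot 1_{\widetilde{\mathcal{C}}}$, is an injective unital $*$-homomorphism.

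The core of the argument is a quotient trick. Given $a = f + \lambda \cdot 1_{\widetilde{\mathcal{C}}} \in \dot{\mathcal{B}}$ (with $f \in \mathcal{B}$ and $\lambda \in \mathbb{C}$) invertible in $\dot{\mathcal{A}}$, its image in $\overline{\mathcal{B}_J^{\widetilde{\mathcal{C}}}(\mathbb{R}^n)}$ is invertible, so Theorem \ref{b-uniq} places $a^{-1}$ in $\mathcal{B}_J^{\widetilde{\mathcal{C}}}(\mathbb{R}^n)$. Applying $\pi_*$ to $a \cdot a^{-1} = 1$ yields $\lambda \cdot \pi_*(a^{-1}) = 1$, forcing $\lambda \neq 0$ and $\pi_*(a^{-1}) = 1/\lambda$, a constant function; hence $a^{-1} \in \dot{\mathcal{B}}$ by the characterization above. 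The same strategy handles closure under $C^\infty$-functional calculus: for self-adjoint $b = f + \lambda \cdot 1_{\widetilde{\mathcal{C}}} \in \dot{\mathcal{B}}$ and a smooth $\varphi$ on a neighborhood of $\sigma_{\dot{\mathcal{A}}}(b) = \sigma_{\overline{\mathcal{B}_J^{\widetilde{\mathcal{C}}}(\mathbb{R}^n)}}(b)$ (equality by spectral permanence of unital C$^*$-subalgebras), Theorem \ref{b-uniq} places $\varphi(b) \in \mathcal{B}_J^{\widetilde{\mathcal{C}}}(\mathbb{R}^n)$, and then $\pi_*(\varphi(b)) = \varphi(\pi_*(b)) = \varphi(\lambda)$ is constant, so $\varphi(b) \in \dot{\mathcal{B}}$. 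Theorem \ref{uniq2} then concludes. The most delicate step I foresee is the verification that $1_{\widetilde{\mathcal{C}}} \notin \mathcal{A}$, which underlies the injectivity of $\dot{\mathcal{A}} \hookrightarrow \overline{\mathcal{B}_J^{\widetilde{\mathcal{C}}}(\mathbb{R}^n)}$; everything else is a systematic translation of Theorem \ref{b-uniq} across the quotient $\pi$.
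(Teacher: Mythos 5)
Your strategy is sound in outline, but it is genuinely different from the paper's proof. The paper never verifies the hypotheses of Theorem \ref{uniq2} for non-unital coefficients: given an arbitrary C$^*$-norm $\|\,\cdot\,\|_0$ on $\mathcal{B}_J^{\mathcal{C}}$, it uses the ideal property of $\mathcal{B}_J^{\mathcal{C}}$ in $\mathcal{B}_J^{\tilde{\mathcal{C}}}$ to define left and right multiplier norms $\|L_f\|_L$ and $\|L_f\|_R$ on $\mathcal{B}_J^{\tilde{\mathcal{C}}}$, shows (adapting Murphy's double-centralizer lemma) that they coincide and form a C$^*$-norm there, identifies that norm with the operator norm by the unital-coefficient uniqueness of Theorem \ref{b-uniq}, and finally recovers $\|\,\cdot\,\|_0$ on the ideal by the substitution $L_g = L_f^*/\|L_f\|_0$. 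Your route instead transports spectral invariance and C$^\infty$-closure from $\mathcal{B}_J^{\tilde{\mathcal{C}}}(\mathbb{R}^n)$ down to the ideal through the scalar-part quotient $\pi_*$ and then applies Theorem \ref{uniq2}; this is essentially the mechanism the paper uses for $\mathcal{S}_J^{\mathcal{C}}(\mathbb{R}^n)$ in Theorem \ref{s-specinv}. Your approach proves more (spectral invariance of $\mathcal{B}_J^{\mathcal{C}}(\mathbb{R}^n)$ in its completion for non-unital $\mathcal{C}$, which the paper does not claim), at the price of more hypotheses to check; the paper's multiplier argument is leaner precisely because it needs none of them.

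Two points need tightening. First, you set $\mathcal{A} := \overline{\mathcal{B}}$ and then embed $\dot{\mathcal{A}}$ into $\overline{\mathcal{B}_J^{\tilde{\mathcal{C}}}(\mathbb{R}^n)}$; this silently identifies the completion of $\mathcal{B}$ with respect to $\|\,\cdot\,\|_{\mathcal{B}_J^{\mathcal{C}}}$ (the $\mathcal{C}$-module operator norm) with its closure under the $\tilde{\mathcal{C}}$-module operator norm, which is the content of Theorem \ref{applications}(2) and is proved in the paper only as a corollary of Theorem \ref{b-uniq2} -- so you should either quote Rieffel's Proposition 5.4 independently or, more simply, declare $\mathcal{A}$ to be the closure of $\mathcal{B}$ inside $\overline{\mathcal{B}_J^{\tilde{\mathcal{C}}}(\mathbb{R}^n)}$; for a uniqueness statement it is irrelevant which C$^*$-norm serves as reference. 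Second, your verification implicitly takes $\dot{\mathcal{A}} = \tilde{\mathcal{A}}$ and $\dot{\mathcal{B}} = \tilde{\mathcal{B}}$, i.e.\ it assumes $\mathcal{A}$ is non-unital. You do show $1_{\tilde{\mathcal{C}}} \notin \mathcal{A}$, but that does not exclude $\mathcal{A}$ possessing a unit of its own; in that case the conventions of Section \ref{uniqtheorems} and Appendix \ref{appendixa} require, beyond spectral invariance of $\tilde{\mathcal{B}}$ in $\tilde{\mathcal{A}}$, that no element of $\mathcal{B}$ be invertible in $\mathcal{A}$, and the functional-calculus check must likewise be phrased for $\dot{\mathcal{A}} = \mathcal{A}$. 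So either prove that the completion is non-unital (not immediate; for $\mathcal{S}_J^{\mathcal{C}}$ the paper quotes Rieffel's Proposition 5.2 for this) or dispose of that case explicitly -- the paper's argument avoids the issue entirely. A minor remark: you use without comment that $\pi_*$ is multiplicative for $\times_J$ (functoriality of the deformed product under $*$-homomorphisms of the coefficient algebra); this is in Rieffel's memoir and is also implicitly used by the paper in Theorem \ref{applications}(3), but it deserves a citation.
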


\begin{proof} It suffices to prove the result for $\mathcal{B}_J^\mathcal{C}$. We begin by noting that $\mathcal{B}_J^\mathcal{C}(\mathbb{R}^n)$ is an ideal in $\mathcal{B}_J^{\tilde{\mathcal{C}}}(\mathbb{R}^n)$: write the function $f \in \mathcal{B}_J^{\tilde{\mathcal{C}}}(\mathbb{R}^n)$ as $x \longmapsto (f_0(x), \lambda(x))$, where $f_0(\, \cdot \,)$ and $\lambda(\, \cdot \,)$ have ranges in $\mathcal{C}$ and $\mathbb{C}$, respectively. Then by the definition of the C$^*$-norm of $\tilde{\mathcal{C}}$, the inequality \begin{equation*} 2 \|\partial^\alpha f(x)\|_{\tilde{\mathcal{C}}} \geqslant \max \left\{\|\partial^\alpha f_0(x)\|_\mathcal{C}, |\partial^\alpha \lambda(x)|\right\} \end{equation*} holds, for all $x \in \mathbb{R}^n$ and $\alpha \in \mathbb{N}^n$, so $f_0$ and $\lambda$ belong to $\mathcal{B}_J^\mathcal{C}(\mathbb{R}^n)$ and $\mathcal{B}_J^\mathbb{C}(\mathbb{R}^n)$, respectively. Thus for every $g \in \mathcal{B}_J^\mathcal{C}(\mathbb{R}^n)$ and $x \in \mathbb{R}^n$ we have \begin{align*} (f \times_J g)(x) &= \int_{\mathbb{R}^n} \int_{\mathbb{R}^n} f(x + Ju) \, g(x + v) \, e^{2 \pi i u \cdot v} \, dv \, du \\ &= \int_{\mathbb{R}^n} \int_{\mathbb{R}^n} (f_0(x + Ju), \lambda(x + Ju)) \, (g(x + v), 0) \, e^{2 \pi i u \cdot v} \, dv \, du \\ &= \int_{\mathbb{R}^n} \int_{\mathbb{R}^n} (f_0(x + Ju) \, g(x + v) + \lambda(x + Ju) \, g(x + v), 0) \, e^{2 \pi i u \cdot v} \, dv \, du, \end{align*} so $f \times_J g$ indeed belongs to $\mathcal{B}_J^\mathcal{C}(\mathbb{R}^n)$ (analogously for $g \times_J f$). Therefore, $\mathcal{B}_J^\mathcal{C}$ is indeed an ideal in $\mathcal{B}_J^{\tilde{\mathcal{C}}}$.

Now let $\|\, \cdot \,\|_0$ be any C$^*$-norm on $\mathcal{B}_J^\mathcal{C}$. Since we know that $\mathcal{B}_J^\mathcal{C}$ is an ideal in $\mathcal{B}_J^{\tilde{\mathcal{C}}}$, the maps \begin{equation} \label{cstarnorm1} \|\, \cdot \,\|_L \colon L_f \longmapsto \sup \left\{\|L_f \circ L_g\|_0: L_g \in \mathcal{B}_J^\mathcal{C}, \|L_g\|_0 \leqslant 1\right\} \end{equation} and \begin{equation} \label{cstarnorm2} \|\, \cdot \,\|_R \colon L_f \longmapsto \sup \left\{\|L_g \circ L_f\|_0: L_g \in \mathcal{B}_J^\mathcal{C}, \|L_g\|_0 \leqslant 1\right\}, \end{equation} are well-defined on $\mathcal{B}_J^{\tilde{\mathcal{C}}}$. Let us show that $\|L_f\|_L = \|L_f\|_R$, for all $L_f \in \mathcal{B}_J^{\tilde{\mathcal{C}}}$. We will give a proof that adapts the strategy of \cite[Lemma 2.1.4, p.~38]{murphy}, which concerns basic facts about the norm of a \textit{double centralizer} on a C$^*$-algebra. For every $L_f \in \mathcal{B}_J^{\tilde{\mathcal{C}}}$ and $L_g, L_h \in \mathcal{B}_J^\mathcal{C}$, we have that $\|(L_g \circ L_f) \circ L_h\|_0 \leqslant \|L_g\|_0 \|L_f\|_R \|L_h\|_0$, so \begin{equation*} \|L_f \circ L_h\|_0 = \sup \left\{\|L_{g'} \circ (L_f \circ L_h)\|_0: L_{g'} \in \mathcal{B}_J^\mathcal{C}, \|L_{g'}\|_0 \leqslant 1\right\} \leqslant \|L_f\|_R \|L_h\|_0, \end{equation*} which implies the inequality $\|L_f\|_L \leqslant \|L_f\|_R$. Similarly, $\|L_g \circ (L_f \circ L_h)\|_0 \leqslant \|L_g\|_0 \|L_f\|_L \|L_h\|_0$, so we obtain \begin{equation*} \|L_g \circ L_f\|_0 = \sup \left\{\|(L_g \circ L_f) \circ L_{h'}\|_0: L_{h'} \in \mathcal{B}_J^\mathcal{C}, \|L_{h'}\|_0 \leqslant 1\right\} \leqslant \|L_g\|_0 \|L_f\|_L \end{equation*} and, consequently, $\|L_f\|_R \leqslant \|L_f\|_L$. Therefore, $\|L_f\|_L = \|L_f\|_R$. Now, we will show that the map $L_f \longmapsto \|L_f\|_L = \|L_f\|_R$ is a C$^*$-norm on $\mathcal{B}_J^{\tilde{\mathcal{C}}}$. To see that the involution is isometric with respect to $\|\, \cdot \,\|_L$, note that $\|L_f^*\|_L \leqslant \|L_f\|_R = \|L_f\|_L$ and $\|L_f\|_L = \|(L_f^*)^*\|_L \leqslant \|L_f^*\|_R = \|L_f^*\|_L$, for every $L_f \in \mathcal{B}_J^{\tilde{\mathcal{C}}}$. On the other hand, to obtain the C$^*$-property for $\|\, \cdot \,\|_L$, first note that taking the supremum on \begin{equation*} \|L_f \circ L_g\|_0^2 = \|(L_g^* \circ L_f^*) \circ (L_f \circ L_g)\|_0 \leqslant \|L_g\|_0 \|(L_f^* \circ L_f) \circ L_g\|_0 \end{equation*} over all $L_g$ satisfying $L_g \in \mathcal{B}_J^\mathcal{C}$ and $\|L_g\|_0 \leqslant 1$, gives $\|L_f\|_L^2 \leqslant \|L_f^* \circ L_f\|_L$, for all $L_f \in \mathcal{B}_J^{\tilde{\mathcal{C}}}$; for the reverse inequality, note that submultiplicativity of $\|\, \cdot \,\|_L$ implies $\|L_f^* \circ L_f\|_L \leqslant \|L_f^*\|_L \|L_f\|_L = \|L_f\|_L^2$. This proves that $\|\, \cdot \,\|_L$ is indeed a C$^*$-norm on $\mathcal{B}_J^{\tilde{\mathcal{C}}}$.

By the uniqueness result proved in Theorem \ref{b-uniq} (applied to $\mathcal{B}_J^{\tilde{\mathcal{C}}}$) we have that \begin{equation*} \|L_f\| = \|L_f\|_L \leqslant \|L_f\|_0, \qquad L_f \in \mathcal{B}_J^\mathcal{C}. \end{equation*} Moreover, if $0 \neq L_f \in \mathcal{B}_J^\mathcal{C}$ is fixed, then substituting $L_g$ by $L_f^*/\|L_f\|_0$ in Equation \eqref{cstarnorm1} yields $\|L_f\|_L \geqslant \|L_f\|_0$. Therefore, $\|L_f\| = \|L_f\|_L = \|L_f\|_0$, for all $L_f \in \mathcal{B}_J^\mathcal{C}$. But $\|\, \cdot \,\|_0$ is arbitrary, so this shows that the only C$^*$-norm on $\mathcal{B}_J^\mathcal{C}$ is obtained by restricting the operator norm $\|\, \cdot \,\|$ of $\mathcal{L}_{\tilde{\mathcal{C}}}(E_n)$. \end{proof}

\subsection*{The algebra \texorpdfstring{$\mathcal{S}_J^\mathcal{C}(\mathbb{R}^n)$}{SJC(Rn)}}$\ $

$\ $

We first prove uniqueness of the C$^*$-norm for $\mathcal{S}_J^\mathcal{C}(\mathbb{R}^n)$, for any C$^*$-algebra $\mathcal{C}$ (unital, or not). Then, we prove the spectral invariance property for $\mathcal{S}_J^\mathcal{C}(\mathbb{R}^n)$ for a unital $\mathcal{C}$.

\begin{theorem} \label{s-uniq} Let $\mathcal{C}$ be a C$^*$-algebra (unital, or not) and $J$ be a skew-symmetric linear transformation on $\mathbb{R}^n$. Then there exists only one C$^*$-norm on $\mathcal{S}_J^\mathcal{C}(\mathbb{R}^n)$. \end{theorem}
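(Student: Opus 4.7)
The plan is to mirror the proof of Theorem \ref{b-uniq2}, using $\mathcal{B}_J^{\tilde{\mathcal{C}}}(\mathbb{R}^n)$ as the ambient unital algebra and invoking the uniqueness of its C$^*$-norm already established in that theorem. The first step is to check that $\mathcal{S}_J^\mathcal{C}(\mathbb{R}^n)$ is a two-sided $*$-ideal of $\mathcal{B}_J^{\tilde{\mathcal{C}}}(\mathbb{R}^n)$. Decomposing $f \in \mathcal{B}_J^{\tilde{\mathcal{C}}}(\mathbb{R}^n)$ as $x \longmapsto (f_0(x), \lambda(x))$ with $f_0 \in \mathcal{B}_J^\mathcal{C}(\mathbb{R}^n)$ and $\lambda \in \mathcal{B}_J^\mathbb{C}(\mathbb{R}^n)$, an explicit computation with \eqref{rieffelprod} (identical in form to the one in the proof of Theorem \ref{b-uniq2}, together with the standard fact that $\mathcal{S}^\mathcal{C}(\mathbb{R}^n)$ is stable under $\times_J$ when one factor is in $\mathcal{B}^\mathcal{C}(\mathbb{R}^n)$, as in Rieffel's monograph \cite{rieffel}) shows that both $f \times_J g$ and $g \times_J f$ belong to $\mathcal{S}_J^\mathcal{C}(\mathbb{R}^n)$ whenever $g \in \mathcal{S}_J^\mathcal{C}(\mathbb{R}^n)$.

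Next, given any C$^*$-norm $\|\, \cdot \,\|_0$ on $\mathcal{S}_J^\mathcal{C}(\mathbb{R}^n)$, I define for each $L_f \in \mathcal{B}_J^{\tilde{\mathcal{C}}}$
\begin{equation*}
 \|L_f\|_L := \sup \bigl\{\|L_f \circ L_g\|_0 : L_g \in \mathcal{S}_J^\mathcal{C},\; \|L_g\|_0 \leqslant 1 \bigr\}
\end{equation*}
and the symmetrically-defined $\|L_f\|_R$. Arguing exactly as in the proof of Theorem \ref{b-uniq2} — using associativity of composition, the ideal property, and the C$^*$-axioms for $\|\, \cdot \,\|_0$ — one shows $\|\, \cdot \,\|_L = \|\, \cdot \,\|_R$ and that this common expression is a C$^*$-norm on $\mathcal{B}_J^{\tilde{\mathcal{C}}}$. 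Applying Theorem \ref{b-uniq2} to $\mathcal{B}_J^{\tilde{\mathcal{C}}}$, this C$^*$-norm must coincide with the operator norm $\|\, \cdot \,\|$ of $\mathcal{L}_{\tilde{\mathcal{C}}}(E_n)$. Restricting to $\mathcal{S}_J^\mathcal{C}$, submultiplicativity of $\|\, \cdot \,\|_0$ gives $\|L_f\|_L \leqslant \|L_f\|_0$, while for $0 \neq L_f \in \mathcal{S}_J^\mathcal{C}$ the test element $L_g = L_f^*/\|L_f^*\|_0$ yields $\|L_f\|_L \geqslant \|L_f L_f^*\|_0/\|L_f\|_0 = \|L_f\|_0$; hence $\|L_f\|_0 = \|L_f\|_L = \|L_f\|$, which is the asserted uniqueness.

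The main technical obstacle — the same one implicit in the proof of Theorem \ref{b-uniq2} — is to establish finiteness of $\|L_f\|_L$ for $L_f \in \mathcal{B}_J^{\tilde{\mathcal{C}}}$ outside the ideal, i.e., boundedness of left multiplication by $L_f$ on the pre-C$^*$-algebra $(\mathcal{S}_J^\mathcal{C}, \|\, \cdot \,\|_0)$. This is resolved by the same mechanism that underlies \cite[Lemma 2.1.4]{murphy}: the pair $(\lambda_f, \rho_f)$ given by left and right multiplication by $L_f$ satisfies the double-centralizer identity $L_h \lambda_f(L_g) = \rho_f(L_h) L_g$ for all $L_g, L_h \in \mathcal{S}_J^\mathcal{C}$; a closed-graph argument in the C$^*$-completion $\mathcal{A}_0$ of $(\mathcal{S}_J^\mathcal{C}, \|\, \cdot \,\|_0)$ — in which an approximate identity of $\mathcal{A}_0$ is used to pass from the centralizer identity to the pointwise vanishing of limits — delivers the required automatic boundedness. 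With this step in place the plan sketched above runs through, and uniqueness for both $\mathcal{S}_J^\mathcal{C}(\mathbb{R}^n)$ and (via the $*$-isomorphism $L$) for $\mathcal{S}_J^\mathcal{C}$ follows at once.
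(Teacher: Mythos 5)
Your main line of argument is essentially the paper's own proof: realize $\mathcal{S}_J^\mathcal{C}(\mathbb{R}^n)$ as a $*$-ideal in a larger Rieffel algebra whose C$^*$-norm is already known to be unique, transport a given C$^*$-norm $\|\, \cdot \,\|_0$ to that larger algebra via $\|\, \cdot \,\|_L = \|\, \cdot \,\|_R$, identify the result with the operator norm by the uniqueness theorem for the larger algebra, and then restrict back using the test element $L_f^*/\|L_f\|_0$. The only structural difference is the choice of ambient algebra: you work inside $\mathcal{B}_J^{\tilde{\mathcal{C}}}(\mathbb{R}^n)$, whereas the paper defines $\|\, \cdot \,\|_L$ and $\|\, \cdot \,\|_R$ on $\mathcal{B}_J^{\mathcal{C}}(\mathbb{R}^n)$ itself and invokes Theorem \ref{b-uniq2} directly, which is already stated for arbitrary (possibly non-unital) $\mathcal{C}$. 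Your variant works, but it is slightly more expensive: you must additionally verify that $\mathcal{S}_J^\mathcal{C}(\mathbb{R}^n)$ is an ideal in $\mathcal{B}_J^{\tilde{\mathcal{C}}}(\mathbb{R}^n)$ (the paper needs only Rieffel's fact that it is an ideal in $\mathcal{B}_J^{\mathcal{C}}(\mathbb{R}^n)$), and passing to $\tilde{\mathcal{C}}$ buys nothing here since Theorem \ref{b-uniq2} has already absorbed the unitization step.

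The one point where you go beyond the paper is the final paragraph on finiteness of $\|L_f\|_L$, and the mechanism you propose does not work as stated. Left multiplication $\lambda_f$ is defined only on $\mathcal{S}_J^\mathcal{C}$, which is merely a dense, a priori non-complete $*$-subalgebra of the completion $\mathcal{A}_0$ of $(\mathcal{S}_J^\mathcal{C}, \|\, \cdot \,\|_0)$. The closed graph theorem gives boundedness only for operators defined on all of a Banach space, and a densely defined operator with closed (or closable) graph can be unbounded; in \cite[Lemma 2.1.4]{murphy} the two maps of a double centralizer are by definition everywhere defined on the complete C$^*$-algebra, which is exactly what is missing here. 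You cannot extend $\lambda_f$ to $\mathcal{A}_0$ by continuity without already knowing the boundedness you are trying to establish, so the claimed ``automatic boundedness'' is not obtained this way. Note that the paper does not attempt such an argument: it records that $\|\, \cdot \,\|_L$ and $\|\, \cdot \,\|_R$ are well defined because $\mathcal{S}_J^\mathcal{C}(\mathbb{R}^n)$ is an ideal, and otherwise repeats the computations of Theorem \ref{b-uniq2}. If you want to make finiteness explicit, you need a different input (for example, an extension of $\|\, \cdot \,\|_0$ to a C$^*$-seminorm on the ambient algebra, which would give $\|f \times_J g\|_0 \leqslant q(f)\,\|g\|_0$ at once); the closed-graph/double-centralizer route by itself does not deliver it, because the order and completeness structure it relies on lives in $\mathcal{A}_0$ while the multiplier $f$ lives outside it.
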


\begin{proof} Let $\|\, \cdot \,\|_0$ be a C$^*$-norm on $\mathcal{S}_J^\mathcal{C}(\mathbb{R}^n)$ and $\|\, \cdot \,\|_{\mathcal{B}_J^\mathcal{C}}$ be the (unique) C$^*$-norm of $\mathcal{B}_J^\mathcal{C}(\mathbb{R}^n)$. Our strategy will be to make good use of the corresponding result already obtained for the algebra $\mathcal{B}_J^\mathcal{C}(\mathbb{R}^n)$.

Just as in Theorem \ref{b-uniq2}, define two maps on $\mathcal{B}_J^\mathcal{C}(\mathbb{R}^n)$ by \begin{equation*} \|\, \cdot \,\|_L \colon f \longmapsto \sup \left\{\|f \times_J g\|_0: g \in \mathcal{S}_J^\mathcal{C}(\mathbb{R}^n), \|g\|_0 \leqslant 1\right\} \end{equation*} and \begin{equation*} \|\, \cdot \,\|_R \colon f \longmapsto \sup \left\{\|g \times_J f\|_0: g \in \mathcal{S}_J^\mathcal{C}(\mathbb{R}^n), \|g\|_0 \leqslant 1\right\} \end{equation*} (note that $\|\, \cdot \,\|_L$ and $\|\, \cdot \,\|_R$ are well-defined because $\mathcal{S}_J^\mathcal{C}(\mathbb{R}^n)$ is an ideal in $\mathcal{B}_J^\mathcal{C}(\mathbb{R}^n)$). Then a repetition of the arguments in Theorem \ref{b-uniq2} shows that the map $f \longmapsto \|f\|_L = \|f\|_R$ is a C$^*$-norm on $\mathcal{B}_J^\mathcal{C}(\mathbb{R}^n)$. Therefore, by the uniqueness result for C$^*$-norms on $\mathcal{B}_J^\mathcal{C}(\mathbb{R}^n)$ proved in Theorem \ref{b-uniq2} we have, in particular, $\|f\|_{\mathcal{B}_J^\mathcal{C}} = \|f\|_L = \|f\|_0$, for every $f \in \mathcal{S}_J^\mathcal{C}(\mathbb{R}^n)$. This proves that restricting $\|\, \cdot \,\|_{\mathcal{B}_J^\mathcal{C}}$ is the only way to obtain a C$^*$-norm on $\mathcal{S}_J^\mathcal{C}(\mathbb{R}^n)$. In other words, $\|\, \cdot \,\|_{\mathcal{S}_J^\mathcal{C}}$ is the only C$^*$-norm on $\mathcal{S}_J^\mathcal{C}(\mathbb{R}^n)$. \end{proof}

\begin{theorem} \label{s-specinv} Let $\mathcal{C}$ be a unital C$^*$-algebra and $J$ be a skew-symmetric linear transformation on $\mathbb{R}^n$. Then the algebra $\mathcal{S}_J^\mathcal{C}(\mathbb{R}^n)$ is spectrally invariant in its C$^*$-completion $\overline{\mathcal{S}_J^\mathcal{C}(\mathbb{R}^n)}$. \end{theorem}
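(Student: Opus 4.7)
The plan is to reduce spectral invariance of $\mathcal{S} := \mathcal{S}_J^\mathcal{C}(\mathbb{R}^n)$ in its C$^*$-completion $\mathcal{A} := \overline{\mathcal{S}_J^\mathcal{C}(\mathbb{R}^n)}$ to the already-established spectral invariance of $\mathcal{B} := \mathcal{B}_J^\mathcal{C}(\mathbb{R}^n)$ in $\overline{\mathcal{B}_J^\mathcal{C}(\mathbb{R}^n)}$ (Theorem \ref{b-uniq}), by exploiting the fact that $\mathcal{S}$ is a two-sided ideal in $\mathcal{B}$ with respect to $\times_J$. The latter is a standard feature of Rieffel's deformed product, as proved in his monograph: the $\times_J$-product of a Schwartz function with a bounded smooth function remains Schwartz.

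Since $\mathcal{C}$ is unital, so is $\mathcal{B}$, with multiplicative unit given by the constant function $1_\mathcal{C}$. Consequently $\dot{\mathcal{S}}$ may be identified with the unital $*$-subalgebra of $\mathcal{B}$ consisting of elements $f + \lambda\,1_\mathcal{C}$ with $f \in \mathcal{S}$ and $\lambda \in \mathbb{C}$; analogously, $\dot{\mathcal{A}}$ embeds as a C$^*$-subalgebra of $\overline{\mathcal{B}_J^\mathcal{C}(\mathbb{R}^n)}$, the C$^*$-norms agreeing because both arise from restricting the operator C$^*$-norm on $\mathcal{L}_\mathcal{C}(E_n)$.

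Now fix $x = f + \lambda\,1_\mathcal{C} \in \dot{\mathcal{S}}$ and suppose it has an inverse $y \in \dot{\mathcal{A}}$, which under the identification above we write as $y = g + \mu\,1_\mathcal{C}$ with $g \in \mathcal{A}$ and $\mu \in \mathbb{C}$. Viewed inside $\overline{\mathcal{B}_J^\mathcal{C}(\mathbb{R}^n)}$, the element $x$ is invertible there as well; by Theorem \ref{b-uniq} its inverse $y$ already lies in $\mathcal{B}$, hence $g \in \mathcal{B}$. Matching scalar components in the identity $xy = 1_\mathcal{C}$ forces $\lambda \neq 0$ and $\mu = 1/\lambda$, while the ``non-scalar'' components yield
\begin{equation*}
f \times_J g \,+\, \lambda g \,+\, \mu f \,=\, 0,
\qquad \text{i.e.} \qquad
g \,=\, -\tfrac{1}{\lambda}\, f \times_J g \,-\, \tfrac{1}{\lambda^{2}}\, f.
\end{equation*}
Since $f \in \mathcal{S}$, $g \in \mathcal{B}$, and $\mathcal{S}$ is an ideal in $\mathcal{B}$, both summands on the right are in $\mathcal{S}$; therefore $g \in \mathcal{S}$, and $y \in \dot{\mathcal{S}}$, proving the claim.

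There is no serious obstacle here; the whole argument is an algebraic bootstrap once one sees that the ideal property upgrades spectral invariance from $\mathcal{B}$ to $\mathcal{S}$. The only mildly delicate point is verifying that $\dot{\mathcal{A}}$ really embeds into $\overline{\mathcal{B}_J^\mathcal{C}(\mathbb{R}^n)}$ in the expected way; in the generic (non-unital) situation $\dot{\mathcal{A}} = \tilde{\mathcal{A}}$ and the embedding is literally $(g,\mu) \mapsto g + \mu\,1_\mathcal{C}$, while in the (unlikely) case that $\mathcal{A}$ already contains a unit, one uses the convention in which $\dot{\mathcal{S}}$ is the subalgebra of $\mathcal{A}$ generated by $\mathcal{S}$ and that unit, after which the same algebraic manipulation still forces $g \in \mathcal{S}$.
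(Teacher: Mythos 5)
Your argument is essentially the paper's own proof: both reduce the statement to the spectral invariance of $\mathcal{B}_J^\mathcal{C}(\mathbb{R}^n)$ in its completion (Theorem \ref{b-uniq}), use that for unital $\mathcal{C}$ the algebra $\mathcal{B}_J^\mathcal{C}(\mathbb{R}^n)$ is unital with the constant function $1_\mathcal{C}$ as unit, and then bootstrap via the identity $g = -\lambda^{-1}\, f \times_J g - \lambda^{-2} f$ together with the fact that $\mathcal{S}_J^\mathcal{C}(\mathbb{R}^n)$ is an ideal in $\mathcal{B}_J^\mathcal{C}(\mathbb{R}^n)$.

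The one loose end is your treatment of the dichotomy on whether $\mathcal{A} := \overline{\mathcal{S}_J^\mathcal{C}(\mathbb{R}^n)}$ is unital. Your ``matching of scalar components'' presupposes that $1_\mathcal{C} \notin \mathcal{A}$ (so that $\dot{\mathcal{A}} \cong \mathcal{A} \oplus \mathbb{C}\,1_\mathcal{C}$ is a genuine direct sum), and you only label this situation ``generic''; the paper closes this by citing Rieffel's Proposition 5.2, which asserts that the completion is non-unital, so that $\dot{\mathcal{S}} = \tilde{\mathcal{S}}$ and $\dot{\mathcal{A}} = \tilde{\mathcal{A}}$. Moreover, your fallback remark for the ``unlikely'' unital case does not go through as stated: if $\mathcal{A}$ had a unit $e$ different from $1_\mathcal{C}$, then invertibility of $x \in \dot{\mathcal{S}}$ in $\dot{\mathcal{A}} = \mathcal{A}$ would mean $xy = yx = e$, which does not transfer to invertibility in $\overline{\mathcal{B}_J^\mathcal{C}(\mathbb{R}^n)}$, so Theorem \ref{b-uniq} could not be invoked in the way you do. Either cite the non-unitality result, or note the cheap alternative: if $1_\mathcal{C}$ were in $\mathcal{A}$, density would give $f \in \mathcal{S}_J^\mathcal{C}(\mathbb{R}^n)$ with $\|f - 1_\mathcal{C}\|_{\mathcal{B}_J^\mathcal{C}} < 1$, whence $f$ is invertible in $\overline{\mathcal{B}_J^\mathcal{C}(\mathbb{R}^n)}$, its inverse lies in $\mathcal{B}_J^\mathcal{C}(\mathbb{R}^n)$ by Theorem \ref{b-uniq}, and the ideal property would force $1_\mathcal{C} = f \times_J f^{-1} \in \mathcal{S}_J^\mathcal{C}(\mathbb{R}^n)$, a contradiction.
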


\begin{proof} As a consequence of \cite[Proposition 5.2, p.~40]{rieffel}, the completion $\mathcal{A} := \overline{\mathcal{S}_J^\mathcal{C}(\mathbb{R}^n)}$ is a non-unital C$^*$-algebra, so the proof of spectral invariance of $\mathcal{B} := \mathcal{S}_J^\mathcal{C}(\mathbb{R}^n)$ in $\mathcal{A} := \overline{\mathcal{S}_J^\mathcal{C}(\mathbb{R}^n)}$ amounts to showing that $\tilde{\mathcal{B}}$ is spectrally invariant in $\tilde{\mathcal{A}}$. First note that, by the discussion in Appendix \ref{appendixa}, the unitization of $\mathcal{B}_J^\mathcal{C}(\mathbb{R}^n)$ is spectrally invariant in the unitization of $\overline{\mathcal{B}_J^\mathcal{C}(\mathbb{R}^n)}$: in fact, if $\mathcal{C}$ is unital, then $\mathcal{B}_J^\mathcal{C}(\mathbb{R}^n)$ has a unit element which coincides with that of its C$^*$-completion (see also Appendix \ref{appendixb}). If $(f, \mu) \in \tilde{\mathcal{B}}$ is invertible in $\tilde{\mathcal{A}}$, where $f \in \mathcal{S}_J^\mathcal{C}(\mathbb{R}^n)$ and $0 \neq \mu \in \mathbb{C}$, then spectral invariance of $\mathcal{B}_J^\mathcal{C}(\mathbb{R}^n)$ in its respective C$^*$-completion shows that the inverse $(f, \mu)^{-1}$ is equal to an element $g$ in the unitization of $\mathcal{B}_J^\mathcal{C}(\mathbb{R}^n)$ given by $x \longmapsto (g_0(x), \mu')$, with $g_0 \in \mathcal{B}_J^\mathcal{C}(\mathbb{R}^n)$ and $0 \neq \mu' \in \mathbb{C}$. Hence, $g_0 = - \mu^{-1} (f \times_J g_0) - \mu^{-2} f$. But because $\mathcal{S}_J^\mathcal{C}(\mathbb{R}^n)$ is an ideal in $\mathcal{B}_J^\mathcal{C}(\mathbb{R}^n)$, this shows that $g$ actually belongs to the unitization of $\mathcal{S}_J^\mathcal{C}(\mathbb{R}^n)$. This establishes the result. \end{proof}

\begin{remark} Clearly, the above two results remain valid if we substitute $\mathcal{S}_J^\mathcal{C}(\mathbb{R}^n)$ by the operator algebra $\mathcal{S}_J^\mathcal{C}$. \end{remark}

\subsection*{Other applications} $\ $

$\ $

We begin with another consequence of Theorem \ref{uniq}.

\begin{theorem} \label{smooth} Let $\mathcal{A}$ be a C$^*$-algebra (unital, or not), $G$ be a finite-dimensional Lie group with Lie algebra $\mathfrak{g}$ and $\alpha \colon g \longmapsto \alpha_g$ be a strongly continuous representation of $G$ implemented by $*$-automorphisms on $\mathcal{A}$. Then the $*$-algebra \begin{equation*} C^\infty(\alpha) := \left\{a \in \mathcal{A}: G \ni g \longmapsto \alpha_g(a) \text{ is of class } \text{C}^\infty \right\} \end{equation*} of smooth elements for the representation $\alpha$ admits only one C$^*$-norm, which is the restriction of $\|\, \cdot \,\|_\mathcal{A}$ to $C^\infty(\alpha)$. \end{theorem}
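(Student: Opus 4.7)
The strategy is to reduce the statement to Theorem \ref{uniq2} by exhibiting $C^\infty(\alpha)$ as a dense $*$-subalgebra of $\mathcal{A}$ which is spectrally invariant and closed under the $\mathrm{C}^\infty$-functional calculus of $\mathcal{A}$. The natural path is to equip $C^\infty(\alpha)$ with a differential norm coming from the infinitesimal generators of $\alpha$, in perfect analogy with what was done for $\mathcal{B}_J^\mathcal{C}$ via the representation $\mathrm{Ad}\,U$ in Theorem \ref{b-diffnorm}; then the desired regularity properties follow from the Bhatt--Inoue--Ogi results \cite[Theorems 3.3 \& 3.4]{bhattdiff}.

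In detail, I would first check that $C^\infty(\alpha)$ is a dense $*$-subalgebra of $\mathcal{A}$. Density is the standard G{\aa}rding argument: for any $a \in \mathcal{A}$ and any $\phi \in C_c^\infty(G)$, the Bochner integral $a_\phi := \int_G \phi(g)\,\alpha_g(a)\,dg$ lies in $C^\infty(\alpha)$ (translation inside the integral translates the test function), and a compactly supported smooth approximate identity on $G$ yields a net $(a_{\phi_\lambda})$ in $C^\infty(\alpha)$ converging to $a$ by strong continuity of $\alpha$. That $C^\infty(\alpha)$ is a $*$-subalgebra follows from the fact that each $\alpha_g$ is a $*$-automorphism, so smoothness is preserved under products and the involution.

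Next, fix a basis $X_1,\ldots,X_d$ of $\mathfrak{g}$ and let $\delta_j$ denote the corresponding closed $*$-derivations of $\mathcal{A}$ (the infinitesimal generators of $\alpha$ in the directions $X_j$), with $C^\infty(\alpha) = \bigcap_{m \geqslant 0} \bigcap_{i_1,\ldots,i_m} \mathrm{dom}(\delta_{i_1}\cdots\delta_{i_m})$. Define
\begin{equation*}
T_0(a) := \|a\|_\mathcal{A}, \qquad T_k(a) := \frac{1}{k!}\sum_{|\alpha|=k} \|\delta^\alpha(a)\|_\mathcal{A}, \quad k \geqslant 1, \; \alpha \in \mathbb{N}^d,
\end{equation*}
for $a \in C^\infty(\alpha)$. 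Because each $\delta_j$ is a $*$-derivation, the Leibniz rule iterated gives $\delta^\alpha(ab) = \sum_{\beta+\gamma = \alpha} \binom{\alpha}{\beta}\delta^\beta(a)\,\delta^\gamma(b)$, from which a standard multinomial computation yields the differential-norm inequality $T_k(ab) \leqslant \sum_{i+j=k} T_i(a)T_j(b)$ for every $k \in \mathbb{N}$; the $*$-seminorm property of each $T_k$ follows from $\delta_j(a^*) = \delta_j(a)^*$. Thus $T$ is a differential norm on $C^\infty(\alpha)$ in the sense of Definition \ref{diffnorm}, with $T_0 = \|\,\cdot\,\|_\mathcal{A}$; the associated sequence $(s_m)$ of submultiplicative $*$-norms \eqref{submdiff} generates a Fr\'echet topology on $C^\infty(\alpha)$ (completeness is a standard consequence of the closedness of the generators $\delta_j$: a Cauchy sequence converges in $\mathcal{A}$, and its derivatives converge as well, so the limit lies in $C^\infty(\alpha)$).

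With the differential norm in place, \cite[Theorem 3.3(iii)]{bhattdiff} immediately yields spectral invariance of $C^\infty(\alpha)$ in $\mathcal{A}$, and \cite[Theorem 3.4]{bhattdiff} yields closure under the $\mathrm{C}^\infty$-functional calculus of $\mathcal{A}$. Theorem \ref{uniq2} then forces any $\mathrm{C}^*$-norm on $C^\infty(\alpha)$ to coincide with the restriction of $\|\,\cdot\,\|_\mathcal{A}$, as claimed. The main subtlety I anticipate is purely bookkeeping: keeping track of the dependence of the various $\delta^\alpha$ on the chosen basis of $\mathfrak{g}$ so as to be sure the estimates are uniform enough to give a genuine differential norm (not just a differential seminorm up to constants). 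Apart from this, the proof is essentially an abstract rerun of the construction already carried out for $\mathrm{Ad}\,U$ in Section \ref{rieffel}.
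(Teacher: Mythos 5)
Your unital-case argument is essentially the paper's proof: the same seminorms built from the infinitesimal generators, the same appeal to \cite[Theorems 3.3 \& 3.4]{bhattdiff} to get spectral invariance and closure under the C$^\infty$-functional calculus, and then Theorem \ref{uniq2}. However, there is a genuine gap: you never address the case where $\mathcal{A}$ (hence $C^\infty(\alpha)$) has no unit, even though the statement explicitly allows it. The differential-(semi)norm framework you invoke is set up, in Definition \ref{diffnorm} and in the way \cite{bhattdiff} is used throughout the paper, for \emph{unital} C$^*$-normed algebras, which is precisely why the paper treats Theorem \ref{b-uniq} (unital $\mathcal{C}$) and Theorem \ref{b-uniq2} (general $\mathcal{C}$) separately. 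The paper's proof of Theorem \ref{smooth} does the same split: it first assumes $\mathcal{A}$ unital and runs your argument, and for non-unital $\mathcal{A}$ it extends $\alpha$ to the strongly continuous representation $\tilde{\alpha}$ on $\tilde{\mathcal{A}}$, applies the unital case to $C^\infty(\tilde{\alpha}) = C^\infty(\alpha) \oplus \mathbb{C}$, and then transfers uniqueness down to the ideal $C^\infty(\alpha)$ by repeating the double-centralizer argument of Theorem \ref{b-uniq2} (defining $\|\,\cdot\,\|_L$ and $\|\,\cdot\,\|_R$ from an arbitrary C$^*$-norm $\|\,\cdot\,\|_0$ on the ideal). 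Some such extra step is needed; your proposal as written simply applies the unital machinery to a possibly non-unital algebra.

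A second, more minor point: your seminorms and your Leibniz formula are written with commutative multi-indices, $\delta^\alpha$ with $\alpha \in \mathbb{N}^d$ and $\delta^\alpha(ab) = \sum_{\beta + \gamma = \alpha} \binom{\alpha}{\beta} \delta^\beta(a)\,\delta^\gamma(b)$. For a general Lie group the generators $\delta_1, \ldots, \delta_d$ do not commute, so $\delta^\alpha$ is not well defined and that multinomial identity fails (it is fine for $\text{Ad}\,U$ in Section \ref{rieffel} only because that action factors through an abelian group). The paper's definition sums over all ordered words $\delta_{i_1} \cdots \delta_{i_k}$ of length $k$ with weight $1/k!$; with that choice, the Leibniz expansion of a word into $2^k$ split terms and a shuffle count (each pair of subwords of lengths $i$ and $j = k - i$ arises from $\binom{k}{i}$ splittings) gives exactly $T_k(ab) \leqslant \sum_{i + j = k} T_i(a)\,T_j(b)$, so the differential-norm property survives; you should phrase your estimate that way (or simply cite \cite[Example 6.2]{bhattdiff}, as the paper does). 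Your density and completeness remarks are fine and are taken for granted in the paper.
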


\begin{proof} Fix an ordered basis $\mathcal{B} := \left(X_k\right)_{1 \leqslant k \leqslant d}$ for $\mathfrak{g}$ and denote by $\delta_k$ the infinitesimal generator of the one-parameter group $t \longmapsto \alpha_{\exp tX_k}$ ($\exp$ denotes the exponential map of the Lie group $G$). Suppose, for the moment, that $\mathcal{A}$ is unital. Equip $C^\infty(\alpha)$ with the topology defined by the sequence $(T_k)_{k \in \mathbb{N}}$ of seminorms given by \begin{equation*}T_0(a) := \|a\|_\mathcal{A} \quad \text{and} \quad T_k(a) := \sum_{i_1, \ldots, i_k = 1}^d \frac{1}{k!} \, \|\delta_{i_1} \ldots \delta_{i_k} a\|_\mathcal{A}, \quad \text{where} \quad k \geqslant 1, \, a \in C^\infty(\alpha). \end{equation*} Then $T \colon a \longmapsto (T_k(a))_{k \in \mathbb{N}}$ is a differential norm on $C^\infty(\alpha)$ \cite[Example 6.2 (i), (ii)]{bhattdiff}, and turns it into a Fr\'echet $*$-algebra. Therefore, since $C^\infty(\alpha)$ is dense in $\mathcal{A}$, we conclude just as in Theorem \ref{b-uniq} via an application of \cite[Theorem 3.4]{bhattdiff} that $C^\infty(\alpha)$ is closed under the C$^\infty$-functional calculus of $\mathcal{A}$. But then Theorem \ref{uniq} tells us that the restriction of $\|\, \cdot \,\|_\mathcal{A}$ is the only C$^*$-norm on $C^\infty(\alpha)$.

If $\mathcal{A}$ is non-unital, then $\alpha \colon g \longmapsto \alpha_g$ extends to a strongly continuous representation $\tilde{\alpha}$ of $G$ by $*$-automorphisms on the unitization $(\tilde{\mathcal{A}}, \|\, \cdot \,\|_{\tilde{\mathcal{A}}})$, where $\tilde{\alpha}_g((a, \lambda)) := (\alpha_g(a), \lambda)$, for all $g \in G$, $a \in \mathcal{A}$ and $\lambda \in \mathbb{C}$. Since we already know that the only C$^*$-norm on $C^\infty(\tilde{\alpha})$ is the restriction of $\|\, \cdot \,\|_{\tilde{\mathcal{A}}}$, the result follows at once from a repetition of the arguments of Theorem \ref{b-uniq2}, by observing that $C^\infty(\alpha)$ is an ideal in $C^\infty(\tilde{\alpha}) = C^\infty(\alpha) \oplus \mathbb{C}$. \end{proof}

\begin{example}
In the scalar case $\mathcal{C} = \mathbb{C}$, when $E_n$ is the usual Hilbert space $L^2(\mathbb{R}^n)$, H.O. Cordes proved \cite{C} \cite[Chapter 8]{cordes} that a bounded operator $A$ on $L^2(\mathbb{R}^n)$ is a smooth vector for the canonical action of the $(2n + 1)$-dimensional Heisenberg group by conjugation if, and only if, $A = \text{Op}(a)$ for some $a \in \mathcal{B}(\mathbb{R}^{2n})$. A similar result for the $n$-dimensional torus $\mathbb{T}^n := \mathbb{R}^n/(2\pi \mathbb{Z})^n$ is also available in the scalar case \cite[Theorem 2]{cabralmelo}: if for each $y \in \mathbb{T}^n$, $T_y$ denotes the translation operator on $L^2(\mathbb{T}^n)$, then a bounded operator $A \in \mathcal{L}(L^2(\mathbb{T}^n))$ is such that the map $\mathbb{T}^n \ni y \longmapsto T_y A T_{-y}$ is smooth if, and only if, $A = \text{Op}(a_j)$ for some function $(a_j)_{j \in \mathbb{Z}^n}$ of order zero, meaning that $a_j \in C^\infty(\mathbb{T}^n)$,
\begin{equation*}
Au(x) = \frac{1}{(2 \pi)^n} \sum_{j \in \mathbb{Z}^n} a_j(x) e^{i \langle j, x \rangle} \widehat{u}_j, \quad \text{with} \quad \widehat{u}_j := \int_{\mathbb{T}^n} e^{i \langle - j, \, \cdot \, \rangle} u(\, \cdot \,),
\end{equation*}
for all $u \in C^\infty(\mathbb{T}^n)$, $x \in \mathbb{T}^n$, and that, for every multiindex $\alpha \in \mathbb{N}^n$, we have the finiteness condition $\sup \left\{|\partial^\alpha a_j(x)|; j \in \mathbb{Z}^n, x \in \mathbb{T}^n\right\} < + \infty$. Therefore, as a consequence of Theorem \ref{smooth}, the algebras $\left\{ \text{Op}(a): a \in \mathcal{B}(\mathbb{R}^{2n}) \right\}$ and $\left\{ \text{Op}(a_j): (a_j)_{j \in \mathbb{Z}^n} \text{ has order zero} \right\}$, above, admit only one C$^*$-norm.
\end{example}

Consider the $*$-algebra $\mathcal{B}_0^\mathcal{C}(\mathbb{R}^n)$, with $J = 0$; in other words, $\mathcal{B}_0^\mathcal{C}(\mathbb{R}^n)$ is just the space $\mathcal{B}^\mathcal{C}(\mathbb{R}^n)$ equipped with the usual pointwise product and involution. We now prove a corollary of Theorem \ref{b-uniq2} which relates the ``sup norm'' (see \eqref{normsb}) $\|\, \cdot \,\|_{\mathcal{B}^\mathcal{C}, 0} \colon f \longmapsto \sup_{x \in \mathbb{R}^n} \|f(x)\|_\mathcal{C}$ and the ``operator norm'' $\|f\|_{\mathcal{B}_0^\mathcal{C}} := \|L_f\|$ on $\mathcal{B}_0^\mathcal{C}(\mathbb{R}^n)$.

\begin{proposition} \label{sup-op} Let $\mathcal{C}$ be a C$^*$-algebra (unital, or not). Then the ``sup norm'' and the ``operator norm'' coincide on $\mathcal{B}_0^\mathcal{C}(\mathbb{R}^n)$. \end{proposition}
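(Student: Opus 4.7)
The plan is to recognize this as an immediate corollary of the uniqueness result \ref{b-uniq2} applied with the particular choice $J = 0$. When $J = 0$, Rieffel's deformed product $\times_J$ reduces to the usual pointwise product on $\mathcal{B}^\mathcal{C}(\mathbb{R}^n)$, so $\mathcal{B}_0^\mathcal{C}(\mathbb{R}^n)$ is simply the $*$-algebra $\mathcal{B}^\mathcal{C}(\mathbb{R}^n)$ with pointwise product and pointwise involution. The operator norm $\|\,\cdot\,\|_{\mathcal{B}_0^\mathcal{C}}$ is a C$^*$-norm by construction (it comes from the faithful $*$-representation $f \longmapsto L_f$ into $\mathcal{L}_\mathcal{C}(E_n)$, as in Definition \ref{rieffelalgebras}), and Theorem \ref{b-uniq2} tells us there is only one C$^*$-norm on $\mathcal{B}_0^\mathcal{C}(\mathbb{R}^n)$. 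Hence, to conclude, it suffices to check that the sup norm $\|\,\cdot\,\|_{\mathcal{B}^\mathcal{C},0}$ is also a C$^*$-norm on $\mathcal{B}_0^\mathcal{C}(\mathbb{R}^n)$.

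The verification is elementary. The map $\|\,\cdot\,\|_{\mathcal{B}^\mathcal{C},0} \colon f \longmapsto \sup_{x \in \mathbb{R}^n} \|f(x)\|_\mathcal{C}$ is clearly a norm on $\mathcal{B}^\mathcal{C}(\mathbb{R}^n)$ (it vanishes only on the zero function). It is submultiplicative with respect to the pointwise product, since $\|f(x)g(x)\|_\mathcal{C} \leqslant \|f(x)\|_\mathcal{C} \, \|g(x)\|_\mathcal{C}$ pointwise in $x$. It is $*$-invariant, as $\|f(x)^*\|_\mathcal{C} = \|f(x)\|_\mathcal{C}$. Finally, it satisfies the C$^*$-identity:
\begin{equation*}
\|f^* f\|_{\mathcal{B}^\mathcal{C},0} = \sup_{x \in \mathbb{R}^n} \|f(x)^* f(x)\|_\mathcal{C} = \sup_{x \in \mathbb{R}^n} \|f(x)\|_\mathcal{C}^2 = \|f\|_{\mathcal{B}^\mathcal{C},0}^2,
\end{equation*}
where the middle equality uses the C$^*$-identity in $\mathcal{C}$ itself.

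Having established that both $\|\,\cdot\,\|_{\mathcal{B}^\mathcal{C},0}$ and $\|\,\cdot\,\|_{\mathcal{B}_0^\mathcal{C}}$ are C$^*$-norms on the $*$-algebra $\mathcal{B}_0^\mathcal{C}(\mathbb{R}^n)$, the conclusion $\|f\|_{\mathcal{B}^\mathcal{C},0} = \|f\|_{\mathcal{B}_0^\mathcal{C}}$ for every $f \in \mathcal{B}_0^\mathcal{C}(\mathbb{R}^n)$ follows directly from Theorem \ref{b-uniq2}. There is essentially no obstacle in this argument: all the substantive work has already been done in proving the uniqueness of the C$^*$-norm on $\mathcal{B}_J^\mathcal{C}(\mathbb{R}^n)$ for arbitrary skew-symmetric $J$; the present proposition just extracts the special case $J = 0$ and compares against the tautologically available sup norm.
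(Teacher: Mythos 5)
Your proposal is correct and follows essentially the same route as the paper: both arguments note that $\|\,\cdot\,\|_{\mathcal{B}^\mathcal{C},0}$ and $\|\,\cdot\,\|_{\mathcal{B}_0^\mathcal{C}}$ are C$^*$-norms on $\mathcal{B}_0^\mathcal{C}(\mathbb{R}^n)$ (the paper having already observed that for $J=0$ the deformed product is the pointwise product) and then invoke Theorem \ref{b-uniq2}. Your explicit verification that the sup norm satisfies the C$^*$-identity is a detail the paper leaves implicit, but the argument is the same.
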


\begin{proof} The norms $\|\, \cdot \,\|_{\mathcal{B}^\mathcal{C}, 0}$ and $\|\, \cdot \,\|_{\mathcal{B}_0^\mathcal{C}}$ are both C$^*$-norms on $\mathcal{B}_0^\mathcal{C}(\mathbb{R}^n)$, so by Theorem \ref{b-uniq2} we must have $\|f\|_{\mathcal{B}^\mathcal{C}, 0} = \|f\|_{\mathcal{B}_0^\mathcal{C}}$, for all $f \in \mathcal{B}_0^\mathcal{C}(\mathbb{R}^n)$. \end{proof}

Next, we apply our results to give an alternative proof to propositions \cite[Proposition 4.11, p.~36]{rieffel}, \cite[Proposition 5.4, p.~41]{rieffel} and \cite[Proposition 5.6, p.~42]{rieffel} in a unified manner:

\begin{theorem} \label{applications} Let $\mathcal{C}$ be a C$^*$-algebra (unital, or not) and $J$ be a skew-symmetric linear transformation on $\mathbb{R}^n$. Then for every $L_f \in \mathcal{B}_J^\mathcal{C}$ we have the following properties:

\begin{enumerate}
\item $\label{ess} \|L_f\| = \sup \left\{\|L_{f \times_J g}\|: g \in \mathcal{S}^\mathcal{C}(\mathbb{R}^n), \|L_g\| \leqslant 1\right\}$.
\item If $\mathcal{C}$ is a C$^*$-subalgebra of the C$^*$-algebra $\mathcal{A}$, so that $f$ can be seen as an element of $\mathcal{B}_J^\mathcal{A}(\mathbb{R}^n)$, then $\label{subalg} \|L_f\|^\mathcal{C} = \|L_f\|^\mathcal{A}$, where $\|\, \cdot \,\|^\mathcal{C}$ and $\|\, \cdot \,\|^\mathcal{A}$ denote the corresponding operator norms.
\item If $\mathcal{A}$ is a C$^*$-algebra and $\theta \colon \mathcal{C} \longrightarrow \mathcal{A}$ is a $*$-homomorphism, then $\label{hom} \|L_{\theta f}\|^\mathcal{A} \leqslant \|L_f\|^\mathcal{C}$, where $(\theta f)(x) := \theta(f(x))$, for all $x \in \mathbb{R}^n$. If $\theta$ is injective $\mathcal{A}$, then an equality holds.
\end{enumerate} \end{theorem}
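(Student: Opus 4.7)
The plan is to derive all three statements from the uniqueness of C$^*$-norms proved in Theorem \ref{b-uniq2}, together with Proposition \ref{envalg} for part (\ref{hom}). In each case I would avoid working directly with $L_f$ and instead produce another candidate C$^*$-(semi)norm on $\mathcal{B}_J^\mathcal{C}(\mathbb{R}^n)$ and then let uniqueness (or the envelope inequality) collapse it onto $\|\cdot\|$.

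For part (\ref{ess}), I would repeat the double-centralizer strategy of Theorem \ref{b-uniq2} with the pair $(\mathcal{S}_J^\mathcal{C}, \mathcal{B}_J^\mathcal{C})$ playing the role of $(\mathcal{B}_J^\mathcal{C}, \mathcal{B}_J^{\tilde{\mathcal{C}}})$. Defining
\[
\|L_f\|_L := \sup\left\{ \|L_f \circ L_g\| : L_g \in \mathcal{S}_J^\mathcal{C},\ \|L_g\| \leqslant 1 \right\}
\]
and the analogous $\|L_f\|_R$ makes sense because $\mathcal{S}_J^\mathcal{C}$ is an ideal in $\mathcal{B}_J^\mathcal{C}$ (as already used in the proof of Theorem \ref{s-uniq}). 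The same computation as in Theorem \ref{b-uniq2} shows that $\|\cdot\|_L = \|\cdot\|_R$ is a C$^*$-seminorm on $\mathcal{B}_J^\mathcal{C}$, and after checking non-degeneracy — that $L_f \circ L_g = 0$ for every $L_g \in \mathcal{S}_J^\mathcal{C}$ forces $L_f = 0$ — Theorem \ref{b-uniq2} forces $\|L_f\|_L = \|L_f\|$, which, in view of $L_{f \times_J g} = L_f \circ L_g$ from \eqref{interplay}, is exactly (\ref{ess}).

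For part (\ref{subalg}), the integral formulas defining $L_f$ and $\times_J$ do not care whether one works with $\mathcal{C}$ or with the larger $\mathcal{A}$, so $f \longmapsto L_f^\mathcal{A}$ is a $*$-homomorphism of $\mathcal{B}_J^\mathcal{C}(\mathbb{R}^n)$ into the adjointable operators on the Hilbert $\mathcal{A}$-module built from $\mathcal{S}^\mathcal{A}(\mathbb{R}^n)$; its faithfulness follows by testing against elements of the form $g \cdot a$ with $g \in \mathcal{S}(\mathbb{R}^n)$ and $a \in \mathcal{A}$, exactly as in the injectivity argument for $\mathrm{Op}$ at the end of the pseudodifferential-operator subsection. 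Hence $f \longmapsto \|L_f\|^\mathcal{A}$ is a C$^*$-norm on $\mathcal{B}_J^\mathcal{C}(\mathbb{R}^n)$, and Theorem \ref{b-uniq2} forces it to agree with $\|\cdot\|^\mathcal{C}$. For part (\ref{hom}), $f \longmapsto L_{\theta f}^\mathcal{A}$ is still a $*$-homomorphism, so $f \longmapsto \|L_{\theta f}\|^\mathcal{A}$ is a C$^*$-seminorm on $\mathcal{B}_J^\mathcal{C}(\mathbb{R}^n)$; when $\mathcal{C}$ is unital, spectral invariance from Theorem \ref{b-uniq} together with Proposition \ref{envalg} yields $\|L_{\theta f}\|^\mathcal{A} \leqslant \|L_f\|^\mathcal{C}$. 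For non-unital $\mathcal{C}$, I would extend $\theta$ to the unital $*$-homomorphism $\tilde{\theta} \colon \tilde{\mathcal{C}} \longrightarrow \tilde{\mathcal{A}}$, $(c,\lambda)\longmapsto (\theta(c),\lambda)$, apply the unital case on $\mathcal{B}_J^{\tilde{\mathcal{C}}}(\mathbb{R}^n)$, and invoke part (\ref{subalg}) for the embeddings $\mathcal{C} \hookrightarrow \tilde{\mathcal{C}}$ and $\mathcal{A} \hookrightarrow \tilde{\mathcal{A}}$ to return to the original norms. The equality for injective $\theta$ is then immediate: any injective $*$-homomorphism of C$^*$-algebras is isometric, so $\theta$ identifies $\mathcal{C}$ with the C$^*$-subalgebra $\theta(\mathcal{C}) \subseteq \mathcal{A}$, and part (\ref{subalg}) gives $\|L_{\theta f}\|^{\theta(\mathcal{C})} = \|L_{\theta f}\|^\mathcal{A}$, which combined with the isometry $\|L_f\|^\mathcal{C} = \|L_{\theta f}\|^{\theta(\mathcal{C})}$ yields equality.

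The main technical nuisance is the non-degeneracy claim in part (\ref{ess}): vanishing of $L_f \circ L_g$ for every $g \in \mathcal{S}^\mathcal{C}(\mathbb{R}^n)$ must force $L_f = 0$. I would handle this by exhibiting an approximate identity $(e_\alpha) \subseteq \mathcal{S}_J^\mathcal{C}$ for $\overline{\mathcal{S}_J^\mathcal{C}}$ (available from Rieffel's theory), and using that $\overline{\mathcal{S}_J^\mathcal{C}}$ acts non-degenerately on $E_n$ in the sense that $L_{e_\alpha}(h) \to h$ in $E_n$ for $h$ in a dense subspace; then $L_f \circ L_{e_\alpha} = 0$ for all $\alpha$ would force $L_f = 0$ on a dense subspace, hence $L_f = 0$ in $\mathcal{L}_\mathcal{C}(E_n)$.
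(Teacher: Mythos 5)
Your proposal is correct and follows essentially the same route as the paper: all three items are reduced to the uniqueness theorem (Theorem \ref{b-uniq2}), with Proposition \ref{envalg} plus the spectral invariance from Theorem \ref{b-uniq} (applied after unitizing $\theta$, and using item (2) to pass between $\|\cdot\|^{\tilde{\mathcal{C}}}, \|\cdot\|^{\tilde{\mathcal{A}}}$ and $\|\cdot\|^{\mathcal{C}}, \|\cdot\|^{\mathcal{A}}$) supplying the inequality in item (3) --- this is exactly the paper's argument, stated there more tersely. The one point where you lean on an input not established in the paper is the non-degeneracy step of item (1), where you invoke an approximate identity of $\overline{\mathcal{S}_J^\mathcal{C}}$ acting non-degenerately on $E_n$; that fact is true, but you can close the step with material already at hand: if $L_f \circ L_g = 0$ for every $g \in \mathcal{S}^\mathcal{C}(\mathbb{R}^n)$, then $L_{f \times_J g} = 0$ by \eqref{interplay}, hence $f \times_J g = 0$ by faithfulness of $L$ (Definition \ref{rieffelalgebras}); since $L_f(g) = f \times_J g$ for Schwartz $g$, the bounded operator $L_f$ vanishes on the dense subspace $\mathcal{S}^\mathcal{C}(\mathbb{R}^n)$ of $E_n$, so $L_f = 0$.
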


\begin{proof} The supremum on the right-hand side of equation \eqref{ess} and the map associating the number $\|L_f\|^\mathcal{A}$ to the element $L_f \in \mathcal{B}_J^\mathcal{C}$ are both C$^*$-norms on $\mathcal{B}_J^\mathcal{C}$, so \eqref{ess} and \eqref{subalg} follow from Theorem \ref{b-uniq2}.

To see that \eqref{hom} also holds, first consider the (unique) $*$-homomorphism $\tilde{\theta} \colon \tilde{\mathcal{C}} \longrightarrow \tilde{\mathcal{A}}$ between the unitizations of $\mathcal{C}$ and $\mathcal{A}$ which extends $\theta$ and sends $1_{\tilde{\mathcal{C}}}$ to $1_{\tilde{\mathcal{A}}}$. The map $L_f \longmapsto \|L_{\tilde{\theta} f}\|^{\tilde{\mathcal{A}}}$ is a C$^*$-seminorm on $\mathcal{B}_J^{\tilde{\mathcal{C}}}$, so Proposition \ref{envalg} combined with Theorem \ref{b-uniq} imply the estimate $\|L_{\tilde{\theta} f}\|^{\tilde{\mathcal{A}}} \leqslant \|L_f\|^{\tilde{\mathcal{C}}}$, for all $L_f \in \mathcal{B}_J^{\tilde{\mathcal{C}}}$. In particular, if $L_f$ belongs to the $*$-subalgebra $\mathcal{B}_J^\mathcal{C}$, then $\|L_{\theta f}\|^\mathcal{A} \leqslant \|L_f\|^\mathcal{C}$, which proves our claim. If $\theta$ is assumed to be injective, then $L_f \longmapsto \|L_{\theta f}\|^\mathcal{A}$ is actually a C$^*$-norm on $\mathcal{B}_J^\mathcal{C}$, so the desired equality follows again from Theorem \ref{b-uniq2}. \end{proof}

\begin{appendix}

\section{A remark on spectral invariance} \label{appendixa}

When viewing a unital algebra as ``not necessarily unital'', by forgetting about its unit, we face an apparent consistency problem, since two different possible definitions of spectrum seem to be available: the spectrum with respect to the algebra itself or with respect to its unitization. Fortunately, it turns out that they ``almost'' coincide. More specifically, suppose that $\mathcal{A}$ is a unital algebra; then even though there is in this case no compelling reason to do so, we can still consider its unitization $\tilde{\mathcal{A}}$, which becomes isomorphic to the direct sum of algebras $\mathcal{A} \oplus \mathbb{C}$, the isomorphism $\tilde{\mathcal{A}} \longrightarrow \mathcal{A} \oplus \mathbb{C}$ being given by $(a,\alpha) \longmapsto (\alpha 1_\mathcal{A} + a, \alpha)$. Using this fact, it is then easy to see that the two spectra of an element $a$ of $\mathcal{A}$, that in $\mathcal{A}$ and that in $\tilde{\mathcal{A}}$, are related by $\sigma_{\tilde{\mathcal{A}}}(a) = \sigma_\mathcal{A}(a) \cup \left\{0\right\}$. As a result, the spectral radius of an element $a$ of $\mathcal{A}$ is independent of which version is used: $r_{\tilde{\mathcal{A}}}(a) = r_\mathcal{A}(a)$. Moreover, $a$ is invertible in $\mathcal{A}$ if, and only if, $(a - 1_\mathcal{A}, 1)$ is invertible in $\tilde{\mathcal{A}}$, their inverses being related by $(a - 1_\mathcal{A}, 1)^{-1} = (a^{-1} - 1_\mathcal{A}, 1)$ and, similarly, $(a, \alpha)$ is invertible in $\tilde{\mathcal{A}}$ if, and only if, $\alpha \neq 0$ and $\alpha 1_\mathcal{A} + a$ is invertible in $\mathcal{A}$, their inverses being related by $(a, \alpha)^{-1} = ((\alpha 1_\mathcal{A} + a)^{-1} - \alpha^{-1} 1_\mathcal{A} , \alpha^{-1})$.

We will now show that there is no ambiguity when dealing with the concept of spectral invariance. Let $\mathcal{A}$ be an algebra and $\mathcal{B}$ be a subalgebra of $\mathcal{A}$. If $\mathcal{A}$ and $\mathcal{B}$ happen to be unital algebras such that the inclusion of $\mathcal{B}$ into $\mathcal{A}$ takes the unit of $\mathcal{B}$ to the unit of $\mathcal{A}$, both denoted by $1$, then we have in fact two potential definitions of spectral invariance and should check that they agree. Indeed, let us prove that $\mathcal{B}$ is spectrally invariant in $\mathcal{A}$ if, and only if, $\tilde{\mathcal{B}}$ is spectrally invariant in $\tilde{\mathcal{A}}$:

(a) If $(b, \beta) \in \tilde{\mathcal{B}}$ is invertible in $\tilde{\mathcal{A}}$, then $\beta \neq 0$ and $\beta 1 + b \in \mathcal{B}$ is invertible in $\mathcal{A}$, so if $\mathcal{B}$ is spectrally invariant in $\mathcal{A}$, $(\beta 1 + b)^{-1}$ belongs to $\mathcal{B}$ and hence $(b, \beta)^{-1} = ((\beta 1 + b)^{-1} - \beta^{-1} 1 , \beta^{-1})$ belongs to $\tilde{\mathcal{B}}$, proving that $\tilde{\mathcal{B}}$ is spectrally invariant in $\tilde{\mathcal{A}}$.

(b) If $b \in \mathcal{B}$ is invertible in $\mathcal{A}$, then $(b - 1, 1) \in \tilde{\mathcal{B}}$ is invertible in $\tilde{\mathcal{A}}$, so if $\tilde{\mathcal{B}}$ is spectrally invariant in $\tilde{\mathcal{A}}$, $(b - 1, 1)^{-1} = (b^{-1} - 1, 1)$ belongs to $\tilde{\mathcal{B}}$ and hence $b^{-1}$ belongs to $\mathcal{B}$, proving that $\mathcal{B}$ is spectrally invariant in $\mathcal{A}$.

There remains one other situation where some kind of ambiguity might arise, namely when $\mathcal{A}$ is unital, but its unit $1$ does not belong to $\mathcal{B}$. In this case, even if $\mathcal{B}$ has a unit of its own, we shall discard it and regard $\mathcal{B}$ as a not necessarily unital algebra, but need to understand that its unitization $\tilde{\mathcal{B}}$ now admits two different unit-preserving embeddings: one embedding mapping the unit $(0, 1)$ of $\tilde{\mathcal{B}}$ to the unit $(0, 1)$ of $\tilde{\mathcal{A}}$, and another embedding mapping the unit $(0, 1)$ of $\tilde{\mathcal{B}}$ to the unit $1$ of $\mathcal{A}$, whose image we shall denote by $\dot{\mathcal{B}}$. Note that $\dot{\mathcal{B}}$ is just the subalgebra of $\mathcal{A}$ generated by $\mathcal{B}$ and the unit $1$ of $\mathcal{A}$. We claim that $\dot{\mathcal{B}}$ is spectrally invariant in $\mathcal{A}$ if, and only if, (i) $\tilde{\mathcal{B}}$ is spectrally invariant in $\tilde{\mathcal{A}}$ and (ii) no element of $\mathcal{B}$ is invertible in $\mathcal{A}$. First of all, it is clear that spectral invariance of $\dot{\mathcal{B}}$ in $\mathcal{A}$ implies condition (ii), because if there were any element $b$ of $\mathcal{B}$ with an inverse in $\mathcal{A}$, spectral invariance would force this inverse to belong to $\dot{\mathcal{B}}$. This, in turn, would imply $1 \in \mathcal{B}$, contradicting the hypothesis that $1 \notin \mathcal{B}$. As for condition (i), suppose that $(b, \beta) \in \tilde{\mathcal{B}} \subseteq \tilde{\mathcal{A}}$ is invertible in $\tilde{\mathcal{A}}$. Then $\beta \neq 0$ and $\beta 1 + b \in \dot{\mathcal{B}}$ is invertible in $\mathcal{A}$, so if $\dot{\mathcal{B}}$ is spectrally invariant in $\mathcal{A}$, $(\beta 1 + b)^{-1}$ belongs to $\dot{\mathcal{B}}$, which means it can be written in the form $(\beta 1 + b)^{-1} = \beta' 1 + b'$ for some $\beta' \in \mathbb{C}$, $b' \in \mathcal{B}$; but multiplying this equation by $\beta 1 + b$ gives $1 = \beta \beta' 1 + \beta b' + \beta' b + bb'$, implying that $\beta' = \beta^{-1}$, and hence $(b, \beta)^{-1} = ((\beta 1 + b)^{-1} - \beta^{-1} 1, \beta^{-1}) = (b', \beta^{-1})$ belongs to $\tilde{\mathcal{B}}$. This proves that $\tilde{\mathcal{B}}$ is spectrally invariant in $\tilde{\mathcal{A}}$. For the converse, suppose that $\beta 1 + b \in \dot{\mathcal{B}}$ is invertible in $\mathcal{A}$. Then $\beta \neq 0$, due to condition (ii), and $(b, \beta) \in \tilde{\mathcal{B}} \subseteq \tilde{\mathcal{A}}$ is invertible in $\tilde{\mathcal{A}}$, so if $\tilde{\mathcal{B}}$ is spectrally invariant in $\tilde{\mathcal{A}}$, $((\beta 1 + b)^{-1} - \beta^{-1} 1, \beta^{-1}) = (b, \beta)^{-1}$ belongs to $\tilde{\mathcal{B}}$. Hence, $(\beta 1 + b)^{-1}$ belongs to $\dot{\mathcal{B}}$, proving that $\dot{\mathcal{B}}$ is spectrally invariant in $\mathcal{A}$.

\section{When are Rieffel algebras unital?} \label{appendixb}

The main goal of this section is to discuss conditions under which the algebra $\mathcal{B}_J^\mathcal{C}(\mathbb{R}^n)$ is unital. As a byproduct, we show that $\mathcal{S}_J^\mathcal{C}(\mathbb{R}^n)$ can never be unital. We will need, however, a version of the Fourier Inversion Formula for functions in $\mathcal{B}_J^\mathcal{C}(\mathbb{R}^n)$, which we quickly derive in what follows: let $f$ be a function in $\mathcal{B}^\mathcal{C}(\mathbb{R}^n)$, $0 \leqslant \phi \leqslant 1$ be a compactly supported smooth function on $\mathbb{R}^n$ which equals 1 on a neighborhood of 0 and define, for each $m \in \mathbb{N} \backslash \left\{0\right\}$ and $x \in \mathbb{R}^n$, the function $f_m(x) := \phi(x / m) f(x)$, $x \in \mathbb{R}^n$. Then for each $m \in \mathbb{N} \backslash \left\{0\right\}$ the formula \begin{equation*} \frac{1}{(2 \pi)^n} \int e^{i v \cdot (x - u)} f_m(u) \, du \, dv = f_m(x) = \int e^{2 \pi i u \cdot v} \, f_m(x + v) \, du \, dv \end{equation*} holds, and integration by parts on the right-hand side integral combined with an induction argument \cite[p.~3]{rieffel} gives \begin{equation*} \int e^{2 \pi i u \cdot v} f_m(x + v) \, du \, dv = \int e^{2 \pi i u \cdot v} \left[ \frac{1}{(1 + u^2 + v^2)^k} \, \sum_{|\alpha| \leqslant 2k} B_\alpha(u, v) \, \partial^\alpha f_m(x + v) \right] du \, dv,\end{equation*} where $k$ is an integer greater than $n/2$, each $B_\alpha$ is a bounded function and the term between brackets is just the development of $[(1 - \Delta / 4\pi^2) M_K]^k(f)$, with $M_K$ being the multiplication operator by the function $K(u, v) := (1 + u^2 + v^2)^{-k}$ and $\Delta := \sum_{j = 1}^{2n} (\partial / \partial_j)^2$. Therefore, taking the limit $m \rightarrow + \infty$ together with an application of the Dominated Convergence Theorem gives \begin{align*} f(x) &= \lim_{m \rightarrow + \infty} \int e^{2 \pi i u \cdot v} \left[ \frac{1}{(1 + u^2 + v^2)^k} \, \sum_{|\alpha| \leqslant 2k} B_\alpha(u, v) \, \partial^\alpha f_m(x + v)\right] du \, dv \\ &= \int e^{2 \pi i u \cdot v} \left[ \frac{1}{(1 + u^2 + v^2)^k} \, \sum_{|\alpha| \leqslant 2k} B_\alpha(u, v) \, \partial^\alpha f(x + v)\right] du \, dv =: \int e^{2 \pi i u \cdot v} \, f(x + v) \, du \, dv,\end{align*} for every fixed $x \in \mathbb{R}^n$, by the definition of oscillatory integrals on page 3 of the monograph \cite{rieffel}. This establishes our result (see also \cite[Corollary 1.12, p.~9]{rieffel}).

\begin{lemma} \label{b-unit} The algebra $\mathcal{B}_J^\mathcal{C}(\mathbb{R}^n)$ is unital if, and only if, the C$^*$-algebra $\mathcal{C}$ is unital.\end{lemma}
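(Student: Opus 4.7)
The plan is to reduce the statement to the single observation that whenever one factor in the deformed product $\times_J$ is a constant $\mathcal{C}$-valued function $c$, the product collapses to ordinary pointwise multiplication by $c$. Once this reduction is in place, both implications follow at once.

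For the ``easy'' direction, assume $\mathcal{C}$ carries a unit $1_\mathcal{C}$. I would take $\mathbf{1} \colon x \longmapsto 1_\mathcal{C}$, which belongs to $\mathcal{B}^\mathcal{C}(\mathbb{R}^n)$ since it is smooth with all derivatives identically zero, and verify directly that it is a unit for $\times_J$. For $(\mathbf{1} \times_J g)(x) = \int \! \int 1_\mathcal{C}\, g(x+v)\, e^{2\pi i u \cdot v}\, dv\, du$ I would pull $1_\mathcal{C}$ out of the oscillatory integral and invoke the Fourier inversion formula derived just above, giving $g(x)$. For the opposite order $(g \times_J \mathbf{1})(x) = \int \! \int g(x+Ju)\, 1_\mathcal{C}\, e^{2\pi i u \cdot v}\, dv\, du$, I would apply the same Fourier inversion formula, this time to the auxiliary function $y \longmapsto g(x+Jy)$, which remains in $\mathcal{B}^\mathcal{C}(\mathbb{R}^n)$ because composition with the linear map $J$ preserves boundedness of derivatives of all orders.

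For the converse, suppose $e \in \mathcal{B}_J^\mathcal{C}(\mathbb{R}^n)$ is a unit. For each $c \in \mathcal{C}$ the constant function $\mathbf{c} \colon x \longmapsto c$ again lies in $\mathcal{B}_J^\mathcal{C}(\mathbb{R}^n)$. Applying the very same oscillatory-integral identities as in the previous paragraph (with $e$ in place of $g$ and $c$ in place of $1_\mathcal{C}$) yields $(e \times_J \mathbf{c})(x) = e(x)\, c$ and $(\mathbf{c} \times_J e)(x) = c\, e(x)$. Since $e$ is a unit, both expressions must equal $\mathbf{c}(x) = c$, so $e(x)\, c = c = c\, e(x)$ for every $x \in \mathbb{R}^n$ and every $c \in \mathcal{C}$. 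Fixing any $x_0$ then shows that $e(x_0)$ is a two-sided unit of $\mathcal{C}$, whence $\mathcal{C}$ is unital; moreover $e$ is forced to be the constant function taking the value $1_\mathcal{C}$.

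The only step requiring genuine work is the oscillatory-integral identity $\int \! \int f(x+Ju)\, e^{2\pi i u \cdot v}\, dv\, du = f(x)$ for $f \in \mathcal{B}^\mathcal{C}(\mathbb{R}^n)$, where the possible noninvertibility of $J$ rules out a naive change of variables. This will be handled by applying the Fourier inversion formula derived in Appendix \ref{appendixb} to the function $y \longmapsto f(x+Jy)$ and then relabelling the dummy variables, so no additional machinery is needed.
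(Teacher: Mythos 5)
Your proposal is correct, and it deviates from the paper's proof in two ways worth recording. The common core is the same: both arguments rest on the generalized Fourier inversion formula of Appendix \ref{appendixb} applied to deformed products in which one factor is constant (pulling the constant out on the left or right is harmless, since multiplication by a fixed element of $\mathcal{C}$ is a bounded linear map and commutes with the Bochner integrals and limits defining the oscillatory integral). The differences: (i) for the identity $g \times_J \mathbf{1} = g$ (and likewise $e \times_J \mathbf{c}$), where the integrand depends on $u$ rather than $v$, you argue directly via the inversion formula applied to $y \longmapsto g(x+Jy)$ plus a relabelling of the dummy variables; this is legitimate, because the phase $e^{2\pi i u\cdot v}$ and the regularizing weight are symmetric in $(u,v)$ and the regularized integrand is absolutely integrable, so the swap is a genuine change of variables — but it does deserve that one-sentence justification, and the paper sidesteps the issue entirely by deducing $f \times_J \tilde{1} = f$ from $\tilde{1} \times_J f = f$ through the anti-multiplicativity of the involution, $(a \times_J b)^* = b^* \times_J a^*$, a trick you could also use to shorten your argument; (ii) in the converse direction the paper first fixes an approximate identity $(e_\alpha)$ of $\mathcal{C}$, uses $f_\alpha \times_J U = f_\alpha$ to show that $\lim_\alpha e_\alpha$ exists and that the unit $U$ is the constant function with that value, and only then verifies that this value is a unit of $\mathcal{C}$; you instead test the unit $e$ against all constant functions $\mathbf{c}$ and read off $e(x)\,c = c = c\,e(x)$ at once, so that $e(x_0)$ is a two-sided unit of $\mathcal{C}$ with no approximate identity needed, and constancy of $e$ follows afterwards from uniqueness of units. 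Your route is thus slightly more self-contained in the converse, at the modest cost of having to justify the symmetric form of the inversion formula; both proofs are sound.
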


\begin{proof} If $\mathcal{C}$ is unital, then an application of the generalized Fourier Inversion Formula derived above shows that the constant function $\tilde{1} \colon x \longmapsto 1_\mathcal{C}$ satisfies $\tilde{1} \times_J f = f$, for all $f \in \mathcal{B}_J^\mathcal{C}(\mathbb{R}^n)$. Applying the involution on both sides of this equality yields $f \times_J \tilde{1} = f$, for all $f \in \mathcal{B}_J^\mathcal{C}(\mathbb{R}^n)$, so $\tilde{1}$ is the unit of $\mathcal{B}_J^\mathcal{C}(\mathbb{R}^n)$.

Conversely, suppose that $\mathcal{B}_J^\mathcal{C}(\mathbb{R}^n)$ is unital, with unit element $U \colon x \longmapsto U(x) \in \mathcal{C}$. Let us begin by showing that $U$ must be a constant function. Fix an approximate identity $(e_\alpha)_{\alpha \in \Gamma}$ for $\mathcal{C}$ and consider the constant functions $f_\alpha \colon x \longmapsto e_\alpha$, for all $\alpha \in \Gamma$. Then by the generalized Fourier Inversion Formula we obtain \begin{equation*} e_\alpha = f_\alpha(x) = (f_\alpha \times_J U)(x) = e_\alpha \int e^{2 \pi i u \cdot v} \, U(x + v) \, du \, dv = e_\alpha \, U(x), \end{equation*} for all $x \in \mathbb{R}^n$ and $\alpha \in \Gamma$. This shows that the limit $e := \lim_\alpha e_\alpha$ exists and that $e = U(x)$, for all $x \in \mathbb{R}^n$. Therefore, $U$ is the constant function $U \colon x \longmapsto e$. But if $c \in \mathcal{C}$ is fixed and $f_c$ denotes the constant function $x \longmapsto c$, we may use the generalized Fourier Inversion Formula again to obtain $c = f_c(x) = (f_c \times_J U)(x) = c \, e$ and $c = f_c(x) = (U \times_J f_c)(x) = e \, c$, for all $x \in \mathbb{R}^n$, which proves that $e$ is indeed the unit element of $\mathcal{C}$. \end{proof}

\begin{remark} We note that the proof of Lemma \ref{b-unit} shows that the algebra $\mathcal{S}_J^\mathcal{C}(\mathbb{R}^n)$ can never be unital, for any C$^*$-algebra $\mathcal{C}$. In fact, if $\mathcal{S}_J^\mathcal{C}(\mathbb{R}^n)$ were unital, with unit element $U \colon x \longmapsto U(x) \in \mathcal{C}$, then a repetition of the argument above would force $\mathcal{C}$ to be unital. Moreover, $U$ would have to be the constant function $x \longmapsto 1_\mathcal{C}$, which does not belong to $\mathcal{S}_J^\mathcal{C}(\mathbb{R}^n)$. \end{remark}

\section{A few remarks regarding non-separability} \label{appendixc}

In this section, we direct our efforts to give explicit proofs for two key lemmas found in \cite[Section 2]{melomerklen2}, in order to show that they still remain valid if we drop the requirement of separability on the C$^*$-algebra $\mathcal{C}$ (the reference \cite{melomerklen2} deals only with the Hilbert C$^*$-module $E_n$ over a separable unital C$^*$-algebra $\mathcal{C}$). The first lemma, below, contains the proof of a non-separable version of \cite[Lemma 1]{melomerklen2}. Also, we do not make the assumption that $\mathcal{C}$ is unital.

\begin{lemma} \label{toscano-merklen1} Let $\mathcal{C}$ be a C$^*$-algebra (unital, or not). For every $A \in \mathcal{L}_\mathcal{C}(E_n)$ there exists a unique operator $A \otimes I \in \mathcal{L}_\mathcal{C}(E_{2n})$ satisfying the property that $(A \otimes I)(f \otimes g) = (Af) \otimes g$, for all $f, g \in \mathcal{S}^\mathcal{C}(\mathbb{R}^n)$.\footnote{We note that, as opposed to what is done in \cite[Lemma 1]{melomerklen2}, we do not impose the hypothesis that $A \in \mathcal{L}_\mathcal{C}(E_n)$ must leave $\mathcal{S}^\mathcal{C}(\mathbb{R}^n)$ invariant; in fact, we cannot impose such a restriction since, in the definition of the map $S$, in Equation \eqref{S}, it is not clear that the operator $D \, [(\text{Ad}\,U)(-x, -\xi)(A)] \, \mathcal{F}^{-1}$ leaves $\mathcal{S}^\mathcal{C}(\mathbb{R}^n)$ invariant.} \end{lemma}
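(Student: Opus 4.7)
The plan is to define $A \otimes I$ first on the linear span $\mathcal{D}$ of simple tensors $f \otimes g$ with $f, g \in \mathcal{S}^\mathcal{C}(\mathbb{R}^n)$ (identifying $f \otimes g$ with the function $(x, y) \mapsto f(x) g(y)$ in $\mathcal{S}^\mathcal{C}(\mathbb{R}^{2n}) \subset E_{2n}$), establish the bound $\|(A \otimes I)\xi\|_{E_{2n}} \leqslant \|A\|\,\|\xi\|_{E_{2n}}$ for $\xi \in \mathcal{D}$, extend by continuity once density of $\mathcal{D}$ in $E_{2n}$ is verified, and finally check adjointability with $(A \otimes I)^* = A^* \otimes I$. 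On a single simple tensor $f \otimes g$ one needs $(Af) \otimes g$ to make sense as an element of $E_{2n}$ even though $Af$ lies only in $E_n$; I would handle this by taking a Schwartz approximant $h_k \to Af$ in $E_n$ and setting $(Af) \otimes g := \lim_k h_k \otimes g$, whose existence is ensured by the Cauchy estimate $\|h_k \otimes g - h_\ell \otimes g\|_{E_{2n}} \leqslant \|h_k - h_\ell\|_{E_n}\,\|g\|_{E_n}$, itself a consequence of $\int g(y)^* c\, g(y)\,dy \leqslant \|c\|_\mathcal{C}\,\langle g, g\rangle_{E_n}$ for positive $c \in \mathcal{C}$.

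The central norm estimate would come from a standard $M_N(\mathcal{C})$-valued positivity argument. Writing $\|A\|^2 \cdot I - A^*A = B^*B$ for some $B \in \mathcal{L}_\mathcal{C}(E_n)$, the matrix
\[
\|A\|^2 \bigl[\langle f_i, f_j \rangle_{E_n}\bigr]_{i,j} - \bigl[\langle Af_i, Af_j \rangle_{E_n}\bigr]_{i,j} = \bigl[\langle Bf_i, Bf_j \rangle_{E_n}\bigr]_{i,j}
\]
is positive in $M_N(\mathcal{C})$ as a Gram matrix. Factoring it as $C^*C$ with $C = (c_{ki}) \in M_N(\mathcal{C})$ and combining with the identity
\[
\Bigl\|\sum_i (Af_i) \otimes g_i\Bigr\|_{E_{2n}}^2 = \Bigl\|\sum_{i,j}\int_{\mathbb{R}^n} g_i(y)^*\, \langle Af_i, Af_j\rangle_{E_n}\, g_j(y)\,dy\Bigr\|_\mathcal{C}
\]
together with its analogue for $\sum_i f_i \otimes g_i$, the desired inequality is reduced to the positivity in $\mathcal{C}$ of $\sum_{i,j}\int g_i(y)^*\langle Bf_i, Bf_j\rangle_{E_n} g_j(y)\,dy$, which is visible once one unfolds it as $\sum_k \bigl\langle \sum_i c_{ki}\cdot g_i,\, \sum_j c_{kj}\cdot g_j \bigr\rangle_{E_n}$ (each summand being a positive element of $\mathcal{C}$). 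This single estimate simultaneously establishes well-definedness of $A \otimes I$ on $\mathcal{D}$ (independently of the representation of $\xi$ as a finite sum) and boundedness.

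After extending by continuity, adjointability would be verified by checking the identity $\langle (A \otimes I)(f_1 \otimes g_1), f_2 \otimes g_2 \rangle_{E_{2n}} = \langle f_1 \otimes g_1, (A^* \otimes I)(f_2 \otimes g_2) \rangle_{E_{2n}}$ on simple tensors, which reduces to $\langle Af_1, f_2 \rangle_{E_n} = \langle f_1, A^* f_2 \rangle_{E_n}$ after performing the $y$-integration, then extending by bilinearity and density; uniqueness of $A \otimes I$ is immediate from density of $\mathcal{D}$. I expect the main obstacle to be establishing density of $\mathcal{D}$ in $E_{2n}$ without any separability assumption on $\mathcal{C}$: standard approaches via countable orthonormal bases or countable dense subsets of $\mathcal{C}$ are unavailable, so one must instead reason directly from the definition of strong $\lambda$-measurability as a $\lambda$-a.e.\ pointwise limit of $\lambda$-simple functions (together with Lemma \ref{embedding}), approximate indicator functions of finite-measure Borel subsets of $\mathbb{R}^{2n}$ by products of indicators of rectangles in $\mathbb{R}^n \times \mathbb{R}^n$, and finally smooth the scalar factors into $\mathcal{S}(\mathbb{R}^n)$.
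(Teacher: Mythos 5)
Your proposal is correct, and it reaches the lemma by a genuinely more hands-on route than the paper. The paper's proof first establishes that $\mathcal{S}^\mathcal{C}(\mathbb{R}^n) \otimes_{\mathrm{alg}} \mathcal{S}^\mathcal{C}(\mathbb{R}^n)$ is $\|\cdot\|_2$-dense in $E_{2n}$ (via $\lambda$-simple functions, approximation of finite-measure Borel sets by finite unions of half-open cubes, and mollification of the scalar indicator factors), then shows that the natural map $\iota$ preserves the $\mathcal{C}$-valued inner products and extends to an inner-product-preserving isomorphism $\overline{\iota} \colon E_{2n} \longrightarrow E_n \otimes_\phi E_n$ onto the interior tensor product (with $\phi(c)$ the left-multiplication operator), and finally just cites Lance's construction of $A \otimes I$ on interior tensor products \cite[(4.6), p.~42]{lance}. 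You instead build $A \otimes I$ directly inside $E_{2n}$: you define $(Af)\otimes g$ by approximation, and prove the key bound $\|\sum_i (Af_i)\otimes g_i\| \leqslant \|A\|\,\|\sum_i f_i \otimes g_i\|$ from the factorization $\|A\|^2 I - A^*A = B^*B$ together with positivity of the Gram matrix $[\langle Bf_i, Bf_j\rangle]$ in $M_N(\mathcal{C})$ and its square-root factorization; this is, in effect, a re-proof in this concrete module of exactly the lemma of Lance that the paper invokes, and your unfolding is sound, simultaneously giving well-definedness, boundedness, and (applied to $A^*$) adjointability, with uniqueness from density. What the paper's route buys is brevity and the explicit identification $E_{2n} \cong E_n \otimes_\phi E_n$, which it records because it underlies the definition of the symbol map $S$; what your route buys is self-containedness, since the interior-tensor-product formalism is bypassed entirely. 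The one ingredient common to both arguments -- and the only point where non-separability of $\mathcal{C}$ genuinely matters -- is the density of the span of simple tensors in $E_{2n}$; you leave this as a sketch, but your plan (reduce to $\lambda$-simple functions, approximate Borel sets of finite measure by finite unions of rectangles, smooth the scalar factors, and use the embedding of Lemma \ref{embedding} only as auxiliary) is precisely the argument the paper carries out in detail, so completing it is routine.
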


\begin{proof} First, let us fix some notations. Denote by $\lambda$ the Lebesgue measure on $\mathbb{R}^n$, and by $C_c^\infty(\mathbb{R}^n, \mathcal{C})$ the space of $\mathcal{C}$-valued compactly supported smooth functions on $\mathbb{R}^n$ (when $\mathcal{C} = \mathbb{C}$, we write simply $C_c^\infty(\mathbb{R}^n)$).

We begin by showing that the algebraic tensor product $C_c^\infty(\mathbb{R}^n, \mathcal{C}) \otimes_{\text{alg}} C_c^\infty(\mathbb{R}^n, \mathcal{C})$ is dense in $L^2(\mathbb{R}^{2n}, \mathcal{C})$ in the $L^2$-topology, via an adaptation of the proof of \cite[Lemma 1.2.31, p.~29]{analysis-bochner}. If $f$ belongs to $L^2(\mathbb{R}^{2n}, \mathcal{C})$, then $f$ can be approximated by $\lambda$-simple functions in the $L^2$-norm \cite[Lemma 1.2.19 (1), p.~23]{analysis-bochner}, so it suffices to prove that the indicator function $1_B$ of a fixed Borel-measurable subset $B$ of $\mathbb{R}^{2n}$ with finite measure can be $L^2$-approximated by an element in $C_c^\infty(\mathbb{R}^n) \otimes_{\text{alg}} C_c^\infty(\mathbb{R}^n)$. Since there exists a cube $C := \prod_{j = 1}^{2n} [c_j, d_j)$, $c_j, d_j \in \mathbb{R}$, which properly contains $B$, we may consider the (restricted) Borel $\sigma$-algebra $\mathcal{A}$ on $C$ and the subsequent algebra $\mathcal{B} \subseteq \mathcal{A}$ of finite unions of cubes of the form $\prod_{j = 1}^{2n} [a_j, b_j)$, $a_j, b_j \in \mathbb{R}$, which generates $\mathcal{A}$. But, then, given any $\epsilon > 0$, an application of \cite[Lemma A.1.2, p.~502]{analysis-bochner} shows that there exists a set $B' \in \mathcal{B}$ which satisfies $\lambda(B \Delta B') < \epsilon$, where $B \Delta B'$ is the symmetric difference $B \Delta B' := (B \cup B') \backslash (B \cap B') = (B \backslash B') \cup (B' \backslash B)$. This shows that $1_B$ can be approximated by indicator functions $1_{B'}$ in the $L^2$-norm, where $B' \in \mathcal{B}$. On the other hand, since the indicator function of an interval $[a, b)$, $a, b \in \mathbb{R}$, can be $L^2$-approximated by a function in $C_c^\infty(\mathbb{R})$, it follows that every indicator function of a cube in $\mathbb{R}^{2n}$, being a product of indicator functions of real intervals, can be $L^2$-approximated by a function in $C_c^\infty(\mathbb{R}^n) \otimes_{\text{alg}} C_c^\infty(\mathbb{R}^n)$. This proves the desired claim that $C_c^\infty(\mathbb{R}^n, \mathcal{C}) \otimes_{\text{alg}} C_c^\infty(\mathbb{R}^n, \mathcal{C})$ is dense in $L^2(\mathbb{R}^{2n}, \mathcal{C})$ implying, in particular, that $\mathcal{S}^\mathcal{C}(\mathbb{R}^n) \otimes_{\text{alg}} \mathcal{S}^\mathcal{C}(\mathbb{R}^n)$ is dense in $\mathcal{S}^\mathcal{C}(\mathbb{R}^{2n})$ in the $L^2$-norm.

Now, we treat the tensor product issue. If $\phi \colon \mathcal{C} \longrightarrow \mathcal{L}_\mathcal{C}(E_n)$ is a $*$-homomorphism, we denote by $E_n \otimes_\phi E_n$ the \textit{interior tensor product} of $E_n$ with itself \cite[p.~41]{lance}, which is a Hilbert C$^*$-module over $\mathcal{C}$: let $N$ be the vector space $N := \left\{z \in E_n \otimes_{\text{alg}} E_n: \langle z, z \rangle_\phi = 0\right\}$ \cite[Proposition 4.5, p.~40]{lance}; then the tensor product $E_n \otimes_\phi E_n$ is the Banach space completion of the quotient $(E_n \otimes_{\text{alg}} E_n) / N$ equipped with the $\mathcal{C}$-valued inner product acting on equivalence classes of simple tensors as \begin{equation*} \langle [f_1 \otimes_{\text{alg}} g_1], [f_2 \otimes_{\text{alg}} g_2] \rangle_\phi := \langle g_1, \left\{\langle f_1, f_2 \rangle_{E_n}\right\} \, g_2 \rangle_{E_n}. \end{equation*} For our purposes, we take $\phi$ as the $*$-homomorphism which sends an element $c \in \mathcal{C}$ to the left-multiplication operator $\phi(c)(f) := c \, f$, where $f \in E_n$ (note that, indeed, $\phi(c)$ belongs to $\mathcal{L}_\mathcal{C}(E_n)$). We will now show that the map \begin{equation*} \iota \colon (\mathcal{S}^\mathcal{C}(\mathbb{R}^n) \otimes_{\text{alg}} \mathcal{S}^\mathcal{C}(\mathbb{R}^n), \langle \, \cdot \,, \, \cdot \, \rangle_{E_{2n}}) \longrightarrow ((\mathcal{S}^\mathcal{C}(\mathbb{R}^n) \otimes_{\text{alg}} \mathcal{S}^\mathcal{C}(\mathbb{R}^n)) / N, \langle \, \cdot \,, \, \cdot \, \rangle_\phi), \qquad \iota(f) := [f], \end{equation*} extends to a linear isomorphism $\overline{\iota} \colon E_{2n} \longrightarrow E_n \otimes_\phi E_n$ which preserves the right $\mathcal{C}$-module structure and satisfies $\langle \overline{\iota}(z_1), \overline{\iota}(z_2) \rangle_\phi = \langle z_1, z_2 \rangle_{E_{2n}}$, for all $z_1, z_2 \in E_{2n}$. First, note that the calculation \begin{align*} \langle [f_1 \otimes_{\text{alg}} g_1], [f_2 \otimes_{\text{alg}} g_2] \rangle_\phi &= \int_{\mathbb{R}^n} g_1(s)^* \left(\int_{\mathbb{R}^n} f_1(t)^* f_2(t) \, dt\right) g_2(s) \, ds \\ &= \left(\int_{\mathbb{R}^{2n}} (f_1(t) g_1(s))^* f_2(t) \, g_2(s) \, dt \, ds \right) = \langle f_1 \otimes_{\text{alg}} g_1, f_2 \otimes_{\text{alg}} g_2 \rangle_{E_{2n}}, \end{align*} which holds for all $f_1, g_1, f_2, g_2 \in \mathcal{S}^\mathcal{C}(\mathbb{R}^n)$, shows that $\iota$ preserves the $\mathcal{C}$-valued inner product, so it is an isometry. In the previous paragraph we have proved, in particular, that $\mathcal{S}^\mathcal{C}(\mathbb{R}^n) \otimes_{\text{alg}} \mathcal{S}^\mathcal{C}(\mathbb{R}^n)$ is dense in $\mathcal{S}^\mathcal{C}(\mathbb{R}^{2n})$ with respect to the norm $\|\, \cdot \,\|_2$, so $\mathcal{S}^\mathcal{C}(\mathbb{R}^n) \otimes_{\text{alg}} \mathcal{S}^\mathcal{C}(\mathbb{R}^n)$ is $\|\, \cdot \,\|_2$-dense in $E_{2n}$. On the other hand, $\iota[\mathcal{S}^\mathcal{C}(\mathbb{R}^n) \otimes_{\text{alg}} \mathcal{S}^\mathcal{C}(\mathbb{R}^n)]$ is dense in $(E_n \otimes_{\text{alg}} E_n) / N$ with respect to the norm $\|\, \cdot \,\|_\phi$ induced by the $\mathcal{C}$-valued inner product $\langle \, \cdot \,, \, \cdot \, \rangle_\phi$, since an application of the Cauchy-Schwarz inequality for Hilbert C$^*$-modules gives \begin{align*} \|[(f - g) \otimes h]\|_\phi^2 = \|\langle (f - g) \otimes h, (f - g) \otimes h \rangle_\phi \|_\mathcal{C} &= \|\langle h, \left\{ \langle (f - g), (f - g) \rangle_{E_n} \right\} \, h \rangle_{E_n}\|_\mathcal{C} \\ &\leqslant \|f - g\|_{E_n}^2 \|h\|_{E_n}^2, \qquad f, g, h \in E_n \end{align*} (an analogous estimate holds for elements of the form $[f \otimes (g - h)]$, $f, g, h \in E_n$). Therefore, the map $\overline{\iota}$ is defined by a standard extension-by-limits argument, so the conclusion of the lemma follows from the calculation in \cite[(4.6), p.~42]{lance}: it shows that, for any given $A \in \mathcal{L}_\mathcal{C}(E_n)$, there exists a unique operator $A \otimes I \in \mathcal{L}_\mathcal{C}(E_n \otimes_\phi E_n)$ satisfying the property that $(A \otimes I)(f \otimes g) = (Af) \otimes g$, for all $f, g \in E_n$. \end{proof}

Let $\gamma_1$ and $\gamma_2$ be the (scalar-valued) functions on $\mathbb{R}$ defined by \begin{equation*} \gamma_1(t) = \begin{cases} e^{-t}, & \text{if } t \geqslant 0 \\ 0, & \text{if } t < 0\end{cases} \qquad \text{and} \qquad \gamma_2(t) = \begin{cases} t \, e^{-t}, & \text{if } t \geqslant 0 \\ 0, & \text{if } t < 0.\end{cases}\end{equation*} Then it is clear that $(1 + d/dt) \gamma_1 = \delta_t$ and $(1 + d/dt)^2 \gamma_2 = \delta_t$ \cite[Theorem 10.1, p.~351]{folland} \cite[Proposition 2.3, p.~253]{cordes}. The functions $\gamma_1$ and $\gamma_2$ will play a central role in the following lemma. It provides, in particular, a proof for \cite[Lemma 2]{melomerklen2}.

\begin{lemma} \label{toscano-merklen2} Let $\mathcal{C}$ be a C$^*$-algebra (unital, or not). For every $b \colon (x, \xi) \longmapsto b(x, \xi)$ in $\mathcal{B}_J^\mathcal{C}(\mathbb{R}^2)$, there exists a unique $a \in \mathcal{B}_J^\mathcal{C}(\mathbb{R}^2)$ such that $D(a) = b$, where $D := (1 + \partial_\xi)^2 (1 + \partial_x)^2$ is considered as an (everywhere defined) operator on $\mathcal{B}_J^\mathcal{C}(\mathbb{R}^2)$. Moreover, such $a$ is given by the formula \begin{equation} \label{D-inv} a(x, \xi) = \int_{\mathbb{R}^3} \overline{u(s, \eta)} \, e^{i s t} b(s + x, t + \xi) \, v(t, \eta) \, ds \, dt \, d\eta,\end{equation} where $u(s, \eta) := (1 + \partial_\eta)[(1 - i \eta)^2 \gamma_2(-s) \, \gamma_2(-\eta) \, e^{i s \eta}]$ and $v(t, \eta) := \gamma_1(t - \eta)/(1 + i t)^2$, for all $(x, \xi) \in \mathbb{R}^2$. \end{lemma}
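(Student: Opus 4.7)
The plan is to handle uniqueness and existence separately, identifying the displayed formula only at the end. Uniqueness should be quick: given $a \in \mathcal{B}^\mathcal{C}(\mathbb{R}^2)$ with $D(a) = 0$, I will analyse $c(x, \xi) := (1 + \partial_x)^2 a(x, \xi)$ slice by slice. Since $a \in \mathcal{B}^\mathcal{C}(\mathbb{R}^2)$ implies $c \in \mathcal{B}^\mathcal{C}(\mathbb{R}^2)$, for each fixed $x$ the function $\xi \longmapsto c(x, \xi)$ is a bounded smooth $\mathcal{C}$-valued solution of the ODE $(1 + d/d\xi)^2 c(x, \cdot) = 0$, whose general solution $\xi \longmapsto (c_1 + c_2 \xi) e^{-\xi}$ is bounded only when $c_1 = c_2 = 0$. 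Hence $c \equiv 0$, and repeating the argument in $x$ yields $a \equiv 0$.

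For existence, rather than attacking the stated formula directly, I will first produce the natural right-inverse by convolving $b$ against the Green's function of $D$. Since $(1 + d/dr)^2 \gamma_2 = \delta_r$, the obvious candidate is
\begin{equation*}
a_0(x, \xi) := \int_{\mathbb{R}^2} \gamma_2(-s) \, \gamma_2(-t) \, b(x + s, \xi + t) \, ds \, dt,
\end{equation*}
which is absolutely convergent because the kernel lies in $L^1(\mathbb{R}^2)$. Differentiation under the integral sign together with the boundedness of every partial derivative of $b$ places $a_0$ in $\mathcal{B}^\mathcal{C}(\mathbb{R}^2)$, and the identity $D(a_0) = b$ will follow from two integrations by parts in each of $s$ and $t$: one rewrites $(1 + \partial_x) b(x + s, \xi + t) = (1 + \partial_s) b(x + s, \xi + t)$, transfers the derivative onto the kernel, and uses $(1 + d/dr)^2 \gamma_2 = \delta_r$ to collapse the integral to $b(x, \xi)$.

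It then remains to identify $a_0$ with the formula of the statement. By Fubini, this reduces to the single kernel identity
\begin{equation*}
\int_{\mathbb{R}} \overline{u(s, \eta)} \, e^{ist} \, v(t, \eta) \, d\eta \; = \; \gamma_2(-s) \, \gamma_2(-t), \qquad s, t \in \mathbb{R}.
\end{equation*}
To prove it, I integrate the $\partial_\eta$ hidden inside $\overline{u(s, \eta)} = (1 + \partial_\eta)[(1 + i\eta)^2 \gamma_2(-s) \gamma_2(-\eta) e^{-is\eta}]$ by parts against $v(t, \eta)$; the factor $(1 + \partial_\eta)$ then lands on $\gamma_1(t - \eta)/(1 + it)^2$ and, via $(1 + d/dr) \gamma_1 = \delta_r$, collapses the $\eta$-integral to an evaluation at $\eta = t$. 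Careful bookkeeping of the prefactors $(1 \pm i\eta)^2$ and $e^{\pm is\eta}$ then produces exactly $\gamma_2(-s) \gamma_2(-t)$.

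The main obstacle I foresee is the rigorous justification of these integration-by-parts manipulations, because $\gamma_1$ has a jump at the origin and $\gamma_2'$ has a corresponding jump, so the boundary terms required are distributional deltas rather than classical values. The cleanest route is to treat $\gamma_1$ and $\gamma_2$ as tempered distributions and invoke the identities $(1 + d/dr) \gamma_1 = \delta_r$ and $(1 + d/dr)^2 \gamma_2 = \delta_r$ directly, so that both the convergence of all integrals and the vanishing of the boundary contributions at infinity follow from the exponential decay of $\gamma_1$ and $\gamma_2$.
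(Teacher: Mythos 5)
Your proposal is correct and follows essentially the same route as the paper: uniqueness via the bounded solutions of the constant-coefficient ODE (the paper scalarizes with continuous functionals and Hahn--Banach, which is the standard way to justify your ``general solution'' claim for $\mathcal{C}$-valued functions), existence via convolution with the Green's kernel $\gamma_2\otimes\gamma_2$ (your $a_0$ is the paper's formula \eqref{D-inv2} after the change of variables $s\mapsto -s$, $t\mapsto -t$), and identification of \eqref{D-inv} through the same $\eta$-integral kernel identity $\int\overline{u(s,\eta)}\,v(t,\eta)\,d\eta=\gamma_2(-s)\,\gamma_2(-t)\,e^{-ist}$. The only cosmetic difference is that you justify the collapse to $\eta=t$ by a distributional integration by parts using $(1+d/dr)\gamma_1=\delta$, whereas the paper encapsulates the same step in the reproduction identity $f(t)=\int\gamma_1(t-\eta)\,[(1+d/d\eta)f](\eta)\,d\eta$ proved via injectivity of $1+d/dt$ on piecewise-differentiable bounded functions.
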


\begin{proof} An application of Fubini's Theorem shows that the integrand on the right-hand side of \eqref{D-inv} is Bochner integrable on $\mathbb{R}^3$. If we define \begin{equation} \label{D-inv2} a(x, \xi) := \int_{\mathbb{R}^2} \gamma_2(s) \, \gamma_2(t) \, b(x - s, \xi - t) \, ds \, dt = e^{-x} e^{-\xi} \int_{(- \infty, x] \times (- \infty, \xi]} (x - s) \, (\xi - t) \, e^s e^t \, b(s, t) \, ds \, dt, \end{equation} a straightforward calculation shows that $D(a)(x, \xi) = b(x, \xi)$, for all $(x, \xi) \in \mathbb{R}^2$, and that $a \in \mathcal{B}_J^\mathcal{C}(\mathbb{R}^2)$. This establishes that $D$ is a surjective operator on $\mathcal{B}_J^\mathcal{C}(\mathbb{R}^2)$.

To prove the injectivity of $D$, let us first show the injectivity of $1 + \partial_x$ as an operator on $\mathcal{B}_J^\mathcal{C}(\mathbb{R}^2)$. Fix a continuous linear functional $\phi$ on $\mathcal{C}$ and suppose that $(1 + \partial_x)(f) = 0$ for some $f \in \mathcal{B}_J^\mathcal{C}(\mathbb{R}^2)$, so that $(1 + \partial_x)(\phi \circ f) = \phi[(1 + \partial_x)(f)] = 0$. Then multiplying both sides by the exponential function $x \longmapsto e^x$ and integrating from 0 to $x$ gives $(\phi \circ f)(x, \xi) = e^{-x} g(\xi)$, for a certain function $g$ defined on $\mathbb{R}$ and all $(x, \xi) \in \mathbb{R}^2$. But if $g(\xi_0) \neq 0$, for some $\xi_0 \in \mathbb{R}$, then taking the limit $x \rightarrow - \infty$ on both sides of $(\phi \circ f)(x, \xi_0) = e^{-x} g(\xi_0)$ implies that $\lim_{x \rightarrow - \infty} (\phi \circ f)(x, \xi_0) = + \infty$, contradicting the boundedness of $\phi \circ f$. Therefore, $\phi \circ f$ must be identically zero which, by a corollary of Hahn-Banach's Theorem, implies that $f$ must also be identically zero. Since the same proof applies for the operator $1 + \partial_\xi$ we have established, in particular, that $D$ is injective. Hence, $D$ is a bijective operator on $\mathcal{B}_J^\mathcal{C}(\mathbb{R}^2)$.

Finally, to prove formula \eqref{D-inv}, consider the vector space of bounded continuous $\mathcal{C}$-valued functions $f$ on $\mathbb{R}$ whose lateral derivatives exist but fail to match on at most a finite number of points of $\mathbb{R}$. Then $1 + d/ds$ sends this space into the space of all $\mathcal{C}$-valued functions on $\mathbb{R}$ in an injective way (we make the convention that $d/ds$ associates the right lateral derivative of $f$ on all of the points): indeed, if $(1 + d/ds)(f) = 0$ for such a function and $\phi$ is a continuous linear functional on $\mathcal{C}$, we can adapt the argument of the previous paragraph to conclude that, if $\left\{x_j\right\}_{1 \leqslant j \leqslant k}$ is the set of real points (ordered in an increasing manner) where the lateral derivatives of $f$ fail to match, then there exist constants $\left\{C_j\right\}_{0 \leqslant j \leqslant k}$ such that $(\phi \circ f)(x) = e^{-x} C_j$, for each $0 \leqslant j \leqslant k$ and all $x \in I_j$, where $I_0 := (- \infty, x_1]$, $I_k := [x_k, + \infty)$ and, when $k > 1$, $I_j := [x_j, x_{j + 1}]$, $1 \leqslant j \leqslant k - 1$ -- if $f$ is everywhere differentiable, then $(\phi \circ f)(x) = e^{-x} C_0$, for some constant $C_0$ and all $x \in \mathbb{R}$; but then repeating the boundedness argument of the previous paragraph yields $C_0 = 0$, and the continuity of $f$ forces $C_j = 0$, for every $1 \leqslant j \leqslant k$. Therefore, the identity \begin{equation*} f(x) = \int_\mathbb{R} \gamma_1(x - s) \, [(1 + d/ds) f](s) \, ds = \int_{(- \infty, x]} e^{x - s} [(1 + d/ds) f](s) \, ds, \qquad x \in \mathbb{R},\end{equation*} holds for all such functions $f$, as can be seen by applying the (injective) operator $1 + d/dx$ to both sides of the equality. We can use this identity to obtain \begin{align*} \int_\mathbb{R} \overline{u(s, \eta)} \, v(t, \eta) \, d\eta &= \frac{\gamma_2(-s)}{(1 + i t)^2} \int_\mathbb{R} (1 + \partial_\eta)[(1 + i \eta)^2 \, \gamma_2(-\eta) \, e^{- i s \eta}] \gamma_1(t - \eta) \, d\eta \\ &= \frac{\gamma_2(-s)}{(1 + i t)^2} \cdot (1 + i t)^2 \, \gamma_2(-t) \, e^{- i s t} = \gamma_2(-s) \, \gamma_2(-t) \, e^{- i s t}, \qquad s, t \in \mathbb{R} \end{align*} which, when substituted in Equation \eqref{D-inv2}, gives \begin{equation*} a(x, \xi) = \int_{\mathbb{R}^2} [\gamma_2(-s) \, \gamma_2(-t) \, e^{- i s t}] \, b(x + s, \xi + t) \, e^{i s t} ds \, dt = \int_{\mathbb{R}^3} \overline{u(s, \eta)} \, v(t, \eta) \, b(x + s, \xi + t) \, e^{i s t} d\eta \, ds \, dt, \end{equation*} for all $(x, \xi) \in \mathbb{R}^2$. This is exactly what we wanted. \end{proof}

\section{The relationship between \texorpdfstring{$L^2(\mathbb{R}^n, \mathcal{C})$}{L2(Rn, C)} and \texorpdfstring{$E_n$}{En}} \label{appendixd}

In this final section of the Appendix we will give a quick proof of the fact that $L^2(\mathbb{R}^n, \mathcal{C})$ is continuously embedded in $E_n$ as a dense subspace. The proof of the lemma below was taken from \cite[Proposi\c c\~ao 3.9]{merklentese}.

\begin{lemma} \label{embedding} Let $\mathcal{C}$ be a C$^*$-algebra (unital, or not). There exists a continuous injective linear map $I \colon L^2(\mathbb{R}^n, \mathcal{C}) \longrightarrow E_n$ such that $I(f) = f$, for all $f \in \mathcal{S}^\mathcal{C}(\mathbb{R}^n)$. \end{lemma}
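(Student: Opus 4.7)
The plan is to construct $I$ as the unique continuous linear extension of the identity map on $\mathcal{S}^\mathcal{C}(\mathbb{R}^n)$ and then verify injectivity. Adapting the measure-theoretic argument in the proof of Lemma \ref{toscano-merklen1} to a single factor yields density of $C_c^\infty(\mathbb{R}^n,\mathcal{C})$ (and hence of $\mathcal{S}^\mathcal{C}(\mathbb{R}^n)$) in $(L^2(\mathbb{R}^n,\mathcal{C}),\|\cdot\|_{L^2})$, while the bound $\|f\|_2 \leqslant \|f\|_{L^2}$ recorded just after the definitions of the two norms in Section \ref{rieffel} shows that the inclusion $\mathcal{S}^\mathcal{C}(\mathbb{R}^n) \hookrightarrow E_n$ is a contraction in the $L^2$-norm. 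The usual extension-by-limits procedure then produces a continuous linear map $I \colon L^2(\mathbb{R}^n,\mathcal{C}) \longrightarrow E_n$ that agrees with the identity on $\mathcal{S}^\mathcal{C}(\mathbb{R}^n)$.

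To set up injectivity, I will first identify the extended norm. For any $f \in L^2(\mathbb{R}^n,\mathcal{C})$ the map $x \longmapsto f(x)^*f(x)$ is strongly $\lambda$-measurable and satisfies $\|f(x)^*f(x)\|_\mathcal{C} = \|f(x)\|_\mathcal{C}^2$, so it is Bochner integrable and $\int_{\mathbb{R}^n} f(x)^*f(x) \, dx$ is a well-defined element of $\mathcal{C}$. If $(f_k) \subseteq \mathcal{S}^\mathcal{C}(\mathbb{R}^n)$ approximates $f$ in $\|\cdot\|_{L^2}$, the telescoping identity $f_k^*f_k - f^*f = f_k^*(f_k-f) + (f_k-f)^*f$ combined with Cauchy--Schwarz gives
\begin{equation*}
\Bigl\| \int_{\mathbb{R}^n} \bigl(f_k(x)^*f_k(x) - f(x)^*f(x)\bigr) \, dx \Bigr\|_\mathcal{C} \;\leqslant\; (\|f_k\|_{L^2} + \|f\|_{L^2}) \, \|f_k - f\|_{L^2} \;\xrightarrow{k \to \infty}\; 0,
\end{equation*}
so, combined with $\|f_k\|_2 = \|I(f_k)\|_2 \to \|I(f)\|_2$, we obtain $\|I(f)\|_2^{\, 2} = \bigl\| \int_{\mathbb{R}^n} f(x)^*f(x) \, dx \bigr\|_\mathcal{C}$ for every $f \in L^2(\mathbb{R}^n,\mathcal{C})$.

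The main obstacle is then to show that $\int_{\mathbb{R}^n} f(x)^*f(x) \, dx = 0$ in $\mathcal{C}$ forces $f = 0$ almost everywhere, with no separability assumption on $\mathcal{C}$. Setting $g(x) := f(x)^*f(x) \geqslant 0$, Pettis' measurability theorem applied to the strongly $\lambda$-measurable function $g$ yields a separable closed $*$-subalgebra $\mathcal{C}_0 \subseteq \mathcal{C}$ such that $g(x) \in \mathcal{C}_0$ outside a null set. The state space of the unital algebra $\tilde{\mathcal{C}}_0$ is weak-$*$ compact and metrizable (by separability), so it admits a countable weak-$*$ dense sequence $(\rho_n)_{n \in \mathbb{N}}$; this sequence separates the positive cone of $\mathcal{C}_0$, since $\|c\| = \sup_n \rho_n(c)$ for every $c \geqslant 0$ in $\mathcal{C}_0$. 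Extending each $\rho_n$ to a positive linear functional on $\mathcal{C}$ (or $\tilde{\mathcal{C}}$) by Hahn--Banach and commuting with the Bochner integral produces
\begin{equation*}
\int_{\mathbb{R}^n} \rho_n(g(x)) \, dx \;=\; \rho_n\!\left( \int_{\mathbb{R}^n} g(x) \, dx \right) \;=\; 0,
\end{equation*}
for every $n$, so, as the integrand is non-negative a.e., $\rho_n(g(\cdot)) = 0$ off a null set $N_n$. For $x$ outside the countable union $\bigcup_n N_n$ together with the null set where $g(x) \notin \mathcal{C}_0$, the separation property forces $g(x) = 0$, and therefore $f(x) = 0$. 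This proves $f = 0$ in $L^2(\mathbb{R}^n,\mathcal{C})$, and hence that $I$ is injective.
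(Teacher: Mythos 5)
Your proposal is correct, and it follows the same overall skeleton as the paper's proof: density of $\mathcal{S}^\mathcal{C}(\mathbb{R}^n)$ in $L^2(\mathbb{R}^n,\mathcal{C})$ adapted from Lemma \ref{toscano-merklen1}, extension by continuity using $\|\cdot\|_2 \leqslant \|\cdot\|_{L^2}$, and reduction of injectivity to the implication $\int_{\mathbb{R}^n} f(x)^*f(x)\,dx = 0 \Rightarrow f = 0$. The middle step differs in technique: the paper never identifies the extended norm, but instead runs a two-stage limiting argument with H\"older's inequality, showing that $\langle f_m, g\rangle_{E_n} \to \int f^*g\,dx$ for each $g$, deducing $\int f^*g\,dx = 0$ for $g$ in the dense subspace from $I(f)=0$, and then taking $g = f_{m'}$ and passing to the limit once more to reach $\int f^*f\,dx = 0$; you instead prove the cleaner identity $\|I(f)\|_2^2 = \bigl\|\int f^*f\,dx\bigr\|_\mathcal{C}$ for all $f \in L^2(\mathbb{R}^n,\mathcal{C})$ via the telescoping estimate, which yields the same conclusion in one stroke and gives the additional information that $I$ carries the Hilbert-module norm formula over to all of $L^2$. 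More significantly, the paper simply asserts the final step ``from which it follows that $f=0$,'' whereas you supply a complete justification: essential separable-valuedness of $x \mapsto f(x)^*f(x)$ from strong measurability, a countable weak-$*$ dense (hence norming on positives) family of states of the separable subalgebra, positive extensions to $\mathcal{C}$, and a countable union of null sets. This is a genuine improvement in rigor, and it is well matched to the paper's stated concern of not assuming $\mathcal{C}$ separable; the only cost is the extra machinery (Pettis-type argument, extension of positive functionals), which the paper's terser route leaves implicit.
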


\begin{proof} We shall denote the usual $L^2$-norm on $L^2(\mathbb{R}^n, \mathcal{C})$ by $\|\, \cdot \,\|_{L^2}$. Analogously as in Lemma \ref{toscano-merklen1}, it can be proved that $\mathcal{S}^\mathcal{C}(\mathbb{R}^n)$ is dense in $(L^2(\mathbb{R}^n, \mathcal{C}), \|\, \cdot \,\|_{L^2})$. Therefore, the identity map $i \colon (\mathcal{S}^\mathcal{C}(\mathbb{R}^n), \|\, \cdot \,\|_{L^2}) \longrightarrow (\mathcal{S}^\mathcal{C}(\mathbb{R}^n), \|\, \cdot \,\|_2)$ extends by continuity to a map $I \colon L^2(\mathbb{R}^n, \mathcal{C}) \longrightarrow E_n$ such that $\|I(g)\|_2 \leqslant \|g\|_{L^2}$, for all $g \in L^2(\mathbb{R}^n, \mathcal{C})$, and $I(f) = f$, for all $f \in \mathcal{S}^\mathcal{C}(\mathbb{R}^n)$.

We will now show that $I$ is injective. Suppose $I(f) = 0$, for a fixed $f \in L^2(\mathbb{R}^n, \mathcal{C})$, and let $(f_m)_{m \in \mathbb{N}}$ be a sequence in $\mathcal{S}^\mathcal{C}(\mathbb{R}^n)$ converging to $f$ in $(L^2(\mathbb{R}^n, \mathcal{C}), \|\, \cdot \,\|_{L^2})$. An application of H\"older's inequality shows that \begin{equation*} \left\| \int_{\mathbb{R}^n} (f - f_m)^*(x) \, g(x) \, dx \right\|_\mathcal{C} \leqslant \int_{\mathbb{R}^n} \|(f - f_m)^*(x) \, g(x)\|_\mathcal{C} \, dx \leqslant \|f - f_m\|_{L^2} \, \|g\|_{L^2},\end{equation*} for all $g \in L^2(\mathbb{R}^n, \mathcal{C})$ and $m \in \mathbb{N}$. This implies, in particular, that \begin{equation} \label{holder} \left( \langle f_m, g \rangle_{E_n} = \int_{\mathbb{R}^n} f_m(x)^* \, g(x) \, dx \right)_{m \in \mathbb{N}} \quad \text{converges to} \quad \int_{\mathbb{R}^n} f(x)^* \, g(x) \, dx \end{equation} in $\mathcal{C}$, for all $g \in \mathcal{S}^\mathcal{C}(\mathbb{R}^n)$. But continuity of $I$ implies the convergence of $(f_m)_{m \in \mathbb{N}}$ to $I(f) = 0$ in $(E_n, \|\, \cdot \,\|_2)$, so the estimate $\|\langle f_m, g \rangle_{E_n}\|_\mathcal{C} \leqslant \|f_m\|_2 \, \|g\|_2$, for all $m \in \mathbb{N}$ and $g \in \mathcal{S}^\mathcal{C}(\mathbb{R}^n)$, shows that $\lim_{m \rightarrow + \infty} \langle f_m, g \rangle_{E_n} = 0$, for each fixed $g \in \mathcal{S}^\mathcal{C}(\mathbb{R}^n)$. Combining this fact with \eqref{holder} (substituting $g$ by $f_{m'}$, $m' \in \mathbb{N}$), we obtain \begin{equation*} \int_{\mathbb{R}^n} f(x)^* \, f_{m'}(x) \, dx = 0, \qquad m' \in \mathbb{N}. \end{equation*} Then another application of H\"older's inequality gives us \begin{equation*} \int_{\mathbb{R}^n} f(x)^* \, f(x) \, dx = \lim_{m' \rightarrow + \infty} \int_{\mathbb{R}^n} f(x)^* \, f_{m'}(x) \, dx = 0, \end{equation*} from which it follows that $f = 0$. This establishes the injectivity of $I$. \end{proof}

\begin{remark} If $\mathcal{C}$ is a unital C$^*$-algebra, then the space $L^2(\mathbb{R}^n)$ is continuously embedded in $E_n$ as a subspace: in fact, the map $J \colon L^2(\mathbb{R}^n) \longrightarrow L^2(\mathbb{R}^n, \mathcal{C})$, $J(f) := f \cdot 1_\mathcal{C}$, embeds $L^2(\mathbb{R}^n)$ isometrically into $L^2(\mathbb{R}^n, \mathcal{C})$, and the composition $I \circ J$ is an isometric embedding of $L^2(\mathbb{R}^n)$ into $E_n$. \end{remark}

\end{appendix}

\end{document}